\documentclass[11pt, oneside, reqno]{amsart}
\usepackage[margin=2.5cm]{geometry}
\usepackage{tikz-cd}
\usepackage{amssymb,amsrefs, mathrsfs, bm, bbm, enumitem, tensor}
\usepackage{mathtools}
\usepackage[draft=false]{hyperref}
\usepackage[noabbrev,capitalise,nameinlink]{cleveref}
\usepackage{bookmark}
\hypersetup{                                                         
  pdfauthor={},
  pdftitle={},
  colorlinks=true, 
  linkcolor={},
    citecolor={},
    urlcolor={},
    linktocpage=true                                                      
}

\newcommand{\Gu}{G^{(0)}}
\newcommand{\cg}{\mathscr{C}_c(G)}
\newcommand{\im}{{\rm{im}}}

\newcommand{\ess}{{\rm{ess}}}
\newcommand{\tiG}{\tilde{G}}
\newcommand{\tiGu}{\tilde{G}^{(0)}}
\newcommand{\mfi}{\mathfrak i}
\newcommand{\abs}[1]{\left\vert#1\right\vert}
\newcommand{\norm}[1]{\left\|#1\right\|}
\newcommand{\supp}{{\rm supp}}
\newcommand{\osupp}{\supp^{\circ}}

\newcommand{\PRLsep}{\noindent\makebox[\linewidth]{\resizebox{0.3333\linewidth}{1pt}{$\bullet$}}\bigskip}

\newtheorem{thm}{Theorem}[section]

\newtheorem{cor}[thm]{Corollary}

\newtheorem{lem}[thm]{Lemma}
\newtheorem{prp}[thm]{Proposition}
\newtheorem{question}[thm]{Question}

\newtheorem{introthm}{Theorem}

\newtheorem{introcor}[introthm]{Corollary}

\newtheorem{introquestion}{Question}

\theoremstyle{definition}
\newtheorem{dfn}[thm]{Definition}

\theoremstyle{remark}
\newtheorem{rmk}[thm]{Remark}

\numberwithin{equation}{section}

\makeatletter
\@namedef{subjclassname@2020}{%
  \textup{2020} Mathematics Subject Classification}
\makeatother

\begin{document}
\date{\today}
\title[On dense subalgebras of the singular ideal in groupoid C*-algebras]{
On dense subalgebras of the singular ideal in groupoid C*-algebras}

\author[Gonzales]{Julian Gonzales}
\address{Julian Gonzales, School of Mathematics and Statistics  \\
University of Glasgow\\ University Place \\ Glasgow \\ G12 8QQ, United Kingdom}
\email{Julian.Gonzales@glasgow.ac.uk}

\author[Hume]{Jeremy B. Hume}
\address{Jeremy B. Hume, School of Mathematics and Statistics  \\
Carleton University\\ 4302 Herzberg Laboratories\\ Ottawa, ON\\ K1S 5B6, Canada}
\email{jeremybhume@gmail.com}

\begin{abstract}
We prove that ideals in amenable second-countable non-Hausdorff \'etale groupoid $C^*$-algebras are determined by their isotropy fibres. As an application, we characterise when the singular functions in Connes' algebra are dense in the singular ideal in terms of a property of explicit ideals in the isotropy group $C^*$-algebras. We then show this density property holds for all $C^*$-algebras of groupoids with finite-by-nilpotent isotropy groups.
\end{abstract}

\subjclass[2020]{22A22, 46L05 (Primary); 37A55, 22D25 (Secondary)}

\keywords{\'Etale groupoid, non-Hausdorff, $C^*$-algebra, singular ideal, isotropy fibres}

\maketitle

\vspace{-0.4cm}
\section*{Introduction}

In recent years non-Hausdorff groupoids and their $C^*$-algebras have gained increased attention. Many groupoids arising from dynamics and geometry are non-Hausdorff, and it is therefore important to develop a robust theory. Indeed, there are natural examples of groupoids of germs, groupoids arising from self-similar groups, and groupoids arising from foliations that are non-Hausdorff (\cite{Con82}, \cite{MM03}, \cite{Nek05}, \cite{Nek09}).

In contrast with the Hausdorff case, the functions in the reduced $C^*$-algebra $C^*_r(G)$ of a non-Hausdorff groupoid $G$ are not necessarily continuous. In fact, there can exist (non-zero) functions in $C^*_r(G)$ whose set of non-zero values has empty interior in $G$. The set of all such functions is known as the \emph{singular ideal} $J$ in $C^*_r(G)$. Historically, the singular ideal has been an obstruction to understanding simplicity for reduced $C^*$-algebras of non-Hausdorff groupoids (\cite{CEPSS}). Characterisations of simplicity and the ideal intersection property have been obtained for the quotient by this ideal (\cite{CEPSS}, \cite{KKLRU}), which is known as the \emph{essential groupoid $C^*$-algebra} (\cite{KM21}, \cite{EP22}). Therefore, the singular ideal is the only obstacle to understanding these properties for $C^*_r(G)$.

Much work has been done to find conditions that ensure that the singular ideal $J$ vanishes (see \cite{CEPSS}, \cite{KM21}, \cite{NS23}, \cite{GNSV}). In \cite[Theorem~A]{BGHL} (see also \cite[Theorem~F]{Hume}) it is characterised when the singular ideal has trivial intersection with the underlying groupoid $^*$-algebra $\cg$, known as \emph{Connes' algebra} (first defined in \cite{Con82} for groupoids arising from foliations). Recently, \cite[Theorem~B]{Hume} characterised vanishing of the singular ideal in terms of vanishing of explicit ideals in the isotropy group $C^*$-algebras.

When the singular ideal is non-zero, little is known about its structure. For example, it is not known in general whether $J\neq \{0\}$ implies $J\cap\cg \neq \{0\}$. This implication was shown to hold for groupoids that are ``finitely non-Hausdorff'' in \cite[Theorem~C]{BGHL}. In \cite{Hume}, the problem was reduced to one about group $C^*$-algebras, and this led to a positive answer in many cases (\cite[Theorem~H]{Hume}).

In order to understand the structure of $J$, it is necessary to ask the following deeper question (see \cite[Question~4.11(III)]{BGHL}): for which groupoids is $J \cap \cg$ dense in $J$? In recent work (\cite{MS25}), it was shown that there exist non-amenable \'etale groupoids for which $J \cap \cg$ is not dense in $J$. The following remains open however.
\begin{introquestion}\label{question:dense?}
    Is $J \cap \cg$ dense in $J$ for any amenable \'etale groupoid $G$?
\end{introquestion}
A positive answer to this question would allow progress to be made towards understanding the structure of $J$ (since the functions in $\cg$ are easier to understand). It is therefore natural to ask a follow up question: can we explicitly describe elements of $J \cap \cg$? 

\vspace{0.3cm}
The goal of our paper is to study these questions following the approach in \cite{Hume} of studying the singular ideal through its isotropy fibres. Let us outline the main achievements and methods of the paper. The first four points listed here are for amenable and second-countable \'etale groupoids.
\vspace{0.3cm}
\begin{itemize}
    \item We show that ideals in a (non-Hausdorff) groupoid $C^*$-algebra are determined by their isotropy fibres. We derive this result from the Hausdorff case (proven in \cite{CN24}) through a novel application of the Hausdorff cover (see \cite{BGHL}).
    \vspace{0.2cm}
    \item We use this result to characterise when $J \cap \cg$ is dense in $J$ in terms of a property of explicit ideals in the isotropy group $C^*$-algebras. This reduces Question~\ref{question:dense?} to a property of discrete amenable groups, which we call the \emph{Density Property}. 
    \vspace{0.2cm}
    \item We show the Density Property is preserved under various operations on the group. We pair this with structural properties of the space of subgroups (equipped with the Chabauty topology) to 
    prove that the Density Property holds for all \textbf{finite-by-nilpotent groups}.
    \vspace{0.2cm}
    \item This reduction allows us to prove that $J \cap \cg$ is dense in $J$ for groupoids with finite-by-nilpotent isotropy groups, groupoids arising from contracting self-similar group actions, and for amenable \'etale group bundles.
    \vspace{0.2cm}
    \item We describe an explicit family of functions whose span is always dense in $J \cap \cg$.
    \vspace{0.2cm}
    \item For \emph{ample} groupoids, we completely describe the structure of functions in the \emph{algebraic singular ideal} $J_\mathbb{C}$, and prove $J_{\mathbb{C}}$ is dense in $J \cap \cg$.
\end{itemize}

\PRLsep

Let us state our main results in more detail. The first four results describe the progress made towards answering Question~\ref{question:dense?}. In \cite{CN24} it was shown that ideals $I \trianglelefteq C^*(G)$ in a groupoid $C^*$-algebra are mapped to ideals in the isotropy group $C^*$-algebras by the restriction maps $C^*(G) \to C^*(G^x_x)$. These images are known as the \emph{isotropy fibres} of the ideal, and are denoted $(I_x)_{x \in \Gu}$. For amenable second-countable Hausdorff \'etale groupoids, ideals in the groupoid $C^*$-algebra are determined by their isotropy fibres (see \cite[Theorem~2.10]{CN24}). We generalise this result to non-Hausdorff groupoids.

\begin{introthm}[See Theorem \ref{IdealFib}]\label{IntroIdealFib}
    Let $G$ be an amenable and second-countable (non-Hausdorff) \'etale groupoid and suppose $I$ and $K$ are ideals in $C^*(G)$. Then, $I = K$ if and only if $I_{x} = K_{x}$ for all $x\in \Gu$.
\end{introthm}

We apply this result to the singular ideal $J$ in the reduced $C^*$-algebra of a non-Hausdorff groupoid (see Proposition \ref{DensChar}).
The isotropy fibres $J_{x}$ are calculated explicitly in \cite[Theorem~5.5]{Hume}, where it is shown that $J_{x}$ is equal to an intersection of kernels of quasi-regular representations associated with subgroups of $G^{x}_{x}$ in a certain collection $\mathcal{X}(x)$. This ideal is denoted $J_{G^x_x, \mathcal{X}(x)} \trianglelefteq C^*(G^x_x)$. We refer the reader to Definition~\ref{dfn:mathcalX(x)} and Section \ref{sub:CharDens} for the definitions of $\mathcal{X}(x)$ and $J_{G^x_x, \mathcal{X}(x)}$. 

\begin{introthm}[See Theorem \ref{ReductionToIso}]\label{IntroReduct}
    Let $G$ be an amenable and second-countable \'etale groupoid. Then $J\cap\cg$ is dense in $J$ if and only if $J_{G^x_x, \mathcal{X}(x)} \cap \mathbb{C}[G^x_x]$ is dense in $J_{G^x_x, \mathcal{X}(x)}$ for all $x \in \Gu$. In particular, if all the isotropy groups $G^x_x$ satisfy the Density Property, then $J\cap\cg$ is dense in $J$.
\end{introthm}

In Theorem~\ref{IntroReduct}, the \emph{Density Property} is a property of discrete amenable groups introduced in Definition \ref{DefnPropD}. Finite groups trivially satisfy the Density Property, and in Theorem \ref{thm:bigthm} we show that all finite-by-nilpotent groups have the Density Property. Currently, we do not know of any discrete amenable group that fails to have the Density Property. We remark that it is a consequence of Theorem~\ref{IntroReduct} and a non-Hausdorff groupoid construction in \cite[Section~6]{Hume} that if there exists such a group, then there also exists an amenable and second-countable \'etale groupoid for which $J\cap\cg$ is not dense in $J$ (see Corollary \ref{equivalence}).

By the definition of the ideals $J_{G^x_x, \mathcal{X}(x)}$, Theorem~\ref{IntroReduct} asserts that density of $J \cap \cg$ in $J$ can be understood entirely in terms of quasi-regular representations of isotropy groups. In light of this, we are able to describe some explicit classes of groupoids for which $J \cap \cg$ is dense in $J$.

\begin{introthm}[See Corollary \ref{cor:bigcor}]\label{Intro:bigthm}
    Let $G$ be an amenable and second-countable \'etale groupoid. For each $x\in \Gu$ let $\mathcal{X}(x)$ be as in Definition~\ref{dfn:mathcalX(x)}, and assume that one of the following holds.
    \begin{enumerate}[label=(\Roman*)]
        \item The isotropy group $G_{x}^{x}$ is finite-by-nilpotent.
        \item The subgroups $X\in \mathcal{X}(x)$ are all finite.
        \item The subgroups $X\in \mathcal{X}(x)$ are all co-finite in $G_{x}^{x}$.
        \item The subgroups $X \in \mathcal{X}(x)$ are all normal in $G^x_x$.
    \end{enumerate}
    Then $J \cap \cg$ is dense in $J$.
\end{introthm}

Here, a countable group $\Gamma$ is \emph{finite-by-nilpotent} if there exists a finite normal subgroup $N \trianglelefteq \Gamma$ for which the quotient $\Gamma/N$ is nilpotent. Theorem~\ref{Intro:bigthm} covers many important classes of groupoids. In particular, any groupoid of germs associated with the action of a countable finite-by-nilpotent group will satisfy (I) for all $x \in \Gu$. Condition (II) is satisfied for all $x \in \Gu$ whenever the closure of the unit space of $G$ has finite source and range fibres. This is the hypothesis assumed in \cite[Theorem~C]{BGHL} when proving the weaker property that $J\neq \{0\}$ implies $J\cap\mathscr{C}_{c}(G)\neq \{0\}.$

\vspace{0.3cm}

The next result considers \'etale group bundles -- that is, \'etale groupoids whose range and source maps are equal. Historically, \'etale group bundles have proven to be a good resource for providing counterexamples to conjectures (see \cite{HLS}, \cite{Wil15} for example). It was shown in \cite{MS25} that there exist non-amenable \'etale group bundles for which $J \cap \cg$ is not dense in $J$. A direct corollary of Theorem~\ref{Intro:bigthm} (IV) is that density always holds for \emph{amenable} \'etale group bundles.

\begin{introcor}[See Corollary~\ref{cor:BundleOfGroups}]
    If $G$ is an amenable and second-countable \'etale group bundle, then $J \cap \cg$ is dense in $J$.
\end{introcor}

\PRLsep

\vspace{-0.1cm}
The remaining results  describe the structure of elements in $J \cap \cg$. In Section~\ref{sec:StructureJcg} we describe an explicit set whose span is dense in the subalgebra $J \cap \cg$. Elements in this spanning set are weighted sums of functions $\psi \in C_c\big(\Gu\big)$ lifted by the source map, and are built using purely topological and algebraic data coming from the groupoid and its isotropy groups. This provides an explicit dense subspace of the singular ideal $J$ for groupoids as described in Theorem~\ref{Intro:bigthm}. 

\begin{introthm}[See Theorem~\ref{SpanJcg}]\label{Intro:SpanJcg}
Let $G$ be an \'etale groupoid that is covered by countably many open bisections. For $x\in \Gu$ let $b \colon G^{x}_{x}\to\mathbb{C}$ be a function with finite support $g_1, \dots, g_n$ such that

\begin{equation}
\label{eqn:ideal}
\sum_{g\in kX} b(g) = 0 \text{ for all } k\in G^{x}_{x} \text{ and } X\in\mathcal{X}(x).
\end{equation}
Then, given any $h \in G_x$ and any open bisections $V_1, \dots, V_n$ containing $hg_1, \dots, hg_n$, there exists an (explicitly defined) open neighbourhood $W\subseteq \bigcap_{i=1}^n s(V_{i})$ of $x$ such that 
\begin{equation}
\label{eqn:construction}
f^{\psi} \coloneq \sum_{i=1}^n b(g_i)(\psi\circ s|_{V_{i}})
\end{equation}
belongs to $J \cap \cg$ for all $\psi\in C_{c}(W)$. Furthermore, the linear span of such functions $f^{\psi}$ is dense in $J\cap\cg$ with respect to any $C^*$-norm.
\end{introthm}

We remark that an element $b \in \mathbb{C}[G^x_x]$ satisfies \eqref{eqn:ideal} if and only if, for every subgroup $X \in \mathcal{X}(x)$, $b$ belongs to the kernel of the associated quasi-regular representation $\lambda_{G^x_x/X}$.

\PRLsep

We now turn our attention to non-Hausdorff \emph{ample} groupoids, where one can intersect $J$ with the complex Steinberg algebra $\mathbb{C}G$ (see \cite{S10}) to obtain what is known as the \emph{algebraic singular ideal} $J_\mathbb{C}$. A characterisation for the vanishing of $J_\mathbb{C}$ is given in \cite{BGHL}, and together with \cite{CEPSS} and \cite{SS21} this leads to a complete characterisation of simplicity for Steinberg algebras (see also \cite{SS23} and \cite{GNSV}). In this paper we look beyond the vanishing of $J_\mathbb{C}$ and describe an explicit spanning set analogous to that in Theorem~\ref{Intro:SpanJcg}. We stress the following: the structure of functions in the algebraic singular ideal $J_\mathbb{C}$ is now fully understood.

\begin{introthm}[See Theorem~\ref{SpanAlgSing}]\label{Intro:SpanAlgSing}
Let $G$ be an ample groupoid that is covered by countably many open bisections. Let $x, b, h, V_1, \dots, V_n$ be as in Theorem~\ref{Intro:SpanJcg}. There exists a compact open neighbourhood $W\subseteq \bigcap_{i=1}^n s(V_{i})$ of $x$ such that the function 
\begin{equation}
\label{eqn:AlgSingconstruction}
f^{W} \coloneq \sum_{i=1}^n b(g_i) \mathbbm{1}_{U_i}
\end{equation}
belongs to $J_\mathbb{C}$, where $U_i \coloneq V_i \cap s^{-1}(W)$. Furthermore, $J_{\mathbb{C}}$ is equal to the linear span of functions of the form $f^{W}$.
\end{introthm}

For ample groupoids, one can ask the following density question: for which ample groupoids is $J_\mathbb{C}$ dense in $J$? Indeed, this is presented as an open problem in \cite{GNSV}. The following theorem implies that $J_\mathbb{C}$ is dense in $J$ if and only if $J \cap \cg$ is dense in $J$.

\begin{introthm}[See Theorem \ref{AlgSingId}]
     Let $G$ be an ample groupoid. Then $J_{\mathbb{C}}$ is dense in $J \cap \cg$ with respect to any $C^*$-norm.
\end{introthm}

An important class of ample groupoids are those arising from self-similar groups (see \cite{Nek05}). These groupoids can be non-Hausdorff, and can also have non-vanishing singular ideal. For a groupoid arising from a \emph{contracting} self-similar group, it was shown in \cite{GNSV} that $J_\mathbb{C} = \{0\}$ if and only if $J = \{0\}$. Using our results, we show that $J_\mathbb{C}$ is in fact dense in $J$ for all groupoids in this class.

\begin{introcor}[See Corollary \ref{SelfSim}]\label{IntroSelfSim}
    Let $G$ be the groupoid arising from a contracting self-similar group action. Then $J_{\mathbb{C}}$ is dense in $J$.
\end{introcor}

In \cite{MS25}, examples of groupoids arising from self-similar actions on infinite alphabets are described for which $J_\mathbb{C}$ is not dense in $J$.

\PRLsep

The paper is structured as follows. In Section~\ref{Preliminaries}, we give the necessary background on the Hausdorff cover groupoid and on isotropy fibres of ideals. In Section~\ref{sec:Ideals} we prove Theorem~\ref{IntroIdealFib}, and then apply this result to the singular ideal $J$ in Section~\ref{sub:CharDens}. Section~\ref{sec:GrpPropD} studies the Density Property for groups and ends by describing some classes of groupoids for which $J \cap \cg$ is dense in $J$. Sections~\ref{sec:StructureJcg} studies the structure of functions in $J \cap \cg$. Section~\ref{sec:Ample} is exclusively concerned with ample groupoids and describes the structure of functions in the algebraic singular ideal $J_\mathbb{C}$.

\section*{Acknowledgments}
The authors would like to thank the Isaac Newton Institute for Mathematical Sciences, Cambridge, for support and hospitality during the programme \emph{Topological groupoids and their $C^{*}$-algebras}, where work on this paper was undertaken. This work was supported by EPSRC grant EP/V521929/1. Hume was supported by NSERC Discovery Grant RGPIN-2021-03834. Gonzales was supported by EPSRC grant EP/T517896/1. Gonzales would like to thank Xin Li for helpful discussions on topics related to this paper.

\section{Preliminaries}\label{Preliminaries}

\subsection{\'Etale groupoids}\label{prelim:etale_gpd}
We refer the reader to \cite{Ren} for formal definitions. A \emph{groupoid} is a small category whose morphisms are all invertible. We let $G$ denote the set of morphisms, and identify the set of objects $\Gu$ with the set of identity morphisms, so that $\Gu \subseteq G$. We call $\Gu$ the \emph{unit space}, and call its elements \emph{units}. For a morphism $g \colon u\to v$, we set $s(g) = u$, $r(g) = v$ and call $s \colon G \to \Gu$ and  $r \colon G \to \Gu$ the \emph{source} and \emph{range} maps, respectively. Composition in the category is then encoded by the \emph{product map} $G\tensor[_s]{\times}{_r} G\to G$, $(g,h) \mapsto gh$. We denote the \emph{inverse} of $g\in G$ by $g^{-1}$. For $x \in \Gu$, we will write $G_x \coloneq s^{-1}(\{x\})$ for the \emph{source fibre} at $x$, $G^x \coloneq r^{-1}(\{x\})$ for the \emph{range fibre} at $x$, and $G^x_x \coloneq G_x \cap G^x$ for the \emph{isotropy group} at $x$.

A \emph{topological groupoid} $G$ is a groupoid equipped with a topology such that the product and inverse maps are continuous. By an \emph{\'etale groupoid} $G$, we mean a locally compact topological groupoid for which the unit space $\Gu$ is Hausdorff, and the range $r \colon G \to \Gu$ and source $s \colon G \to \Gu$ maps are local homeomorphisms. Note that this ensures that the unit space $\Gu$ is open in $G$. This paper will only consider \'etale groupoids, and primarily concerns non-Hausdorff groupoids (all results also hold for Hausdorff groupoids, but often for trivial reasons i.e. because the singular ideal vanishes for Hausdorff groupoids).

An open subset $U \subseteq G$ will be called an open \emph{bisection} if the restrictions $r|_U \colon U \to r(U)$ and $s|_U \colon U \to s(U)$ are homeomorphisms. Note that the open bisections form a basis for the topology of any \'etale groupoid. We define \emph{Connes' algebra} as follows:
\[
\cg \coloneq \text{span}\{f \colon G \to \mathbb{C} \colon f|_U \in C_c(U) \text{ and } f|_{G \setminus U} = 0 \text{ for some open bisection } U \subseteq G\}.
\]
If the groupoid $G$ is Hausdorff, then $\cg = C_c(G)$, the set of all compactly supported continuous functions on $G$. If $G$ is not Hausdorff, then the functions in $\cg$ need not be continuous. We equip $\cg$ with the structure of a $^*$-algebra as follows. For $f, f_1, f_2 \in \cg$, and $g \in G$ define
\[
f^*(g) \coloneq \overline{f(g^{-1})} \quad \text{and} \quad f_1 * f_2 (g) \coloneq \sum_{h \in G_{s(g)}} f_1(gh^{-1})f_2(h).
\]
We now describe the reduced and full groupoid $C^*$-algebras. Given $x \in \Gu$, let $\lambda_x \colon \cg\to B(\ell^2(G_x))$ be the representation given by $\lambda_{x}(f)(\xi) = f * \xi$ for $f \in \cg$ and $\xi \in \ell^2(G_x)$, where $(f * \xi)(g) =  \sum_{h \in G_{x}} f(gh^{-1}) \xi(h)$ for $g \in G_x$. Then the \emph{reduced groupoid $C^*$-algebra} $C^*_r(G)$ of $G$ is defined to be the completion of $\cg$ with respect to the norm $\Vert f \Vert_{C^*_r(G)} = \sup_{x \in \Gu} \Vert \lambda_{x}(f) \Vert$. The \emph{full groupoid $C^*$-algebra} $C^*(G)$ of $G$ is defined to be the completion of $\cg$ with respect to the norm $\Vert f \Vert_{C^*(G)} = \sup_{\pi} \Vert \pi(f) \Vert$, where the supremum is taken over all $^*$-algebra representations $\pi \colon \cg \to B(H)$, for some Hilbert space $H$.

Elements of the reduced groupoid $C^*$-alegbra $C^*_r(G)$ can be viewed as bounded Borel functions on $G$ in a canonical way (see \cite[Lemma~3.15]{BM25}, \cite[Proposition~II.4.2]{Ren}). From now on, we will always treat elements of $C^*_r(G)$ as functions on $G$ in this way. Given $a \in C^*_r(G)$, we define
\[
\osupp(a) \coloneq \{g \in G \colon a(g) \neq 0\}.
\]
\begin{dfn}[\cite{CEPSS}, \cite{KM21}, \cite{EP22}]
    The \emph{singular ideal} $J$ in $C^*_r(G)$ is defined as
    \[
    J \coloneq \{a \in C^*_r(G) \colon \osupp(a) \text{ has empty interior}\}.
    \]
\end{dfn}
The singular ideal $J$ is a (closed) ideal in $C^*_r(G)$. For $a \in C^*_r(G)$, we have $a \in J$ if and only if $s\big(\osupp(a)\big)$ has empty interior in $\Gu$ (see \cite[Proposition~4.6]{BKM}, \cite[Lemma~4.1(iii)]{BGHL}).

\vspace{0.3cm}
Now assume that the groupoid $G$ is covered by countably many open bisections. We say a unit $x \in \Gu$ is \emph{Hausdorff} if for any $g \in G_x^x \setminus \{x\}$ there exists an open neighbourhood $U$ of $g$ such that $U \cap \Gu = \emptyset$. It is clear that $G$ is Hausdorff if and only if all units are Hausdorff. We let $C \subseteq \Gu$ denote the set of Hausdorff units. By \cite[Lemma~7.15]{KM21}, $C$ is dense in $\Gu$ (the Hausdorff points complement the so-called \emph{dangerous} points of \cite{KM21}). An element $a \in C^*_r(G)$ belongs to the singular ideal $J$ if and only if $s\big(\osupp(a)\big) \subseteq \Gu \setminus C$ (see \cite[Proposition~7.18]{KM21}).

\PRLsep

\subsection{Hausdorff cover}\label{prelim:HffCover}

We now introduce the Hausdorff cover groupoid. See \cite{BGHL} for an extensive study of the Hausdorff cover for non-Hausdorff \'etale groupoids. Let $G$ be a (non-Hausdorff) {\'e}tale groupoid, and let $\mathfrak{C}(G)$ be its space of closed subsets. Singleton sets in $G$ are closed, hence there is a canonical inclusion $\iota \colon G \to \mathfrak{C}(G)$ given by $\iota(g) \coloneq \{g\}$. When equipped with the Fell topology, $\mathfrak{C}(G)$ has the structure of a compact Hausdorff space (see \cite{Fell}).

\begin{dfn}[\cite{Tim}]
Let $G$ be an {\'e}tale groupoid. The \emph{Hausdorff cover} $\tiG$ is defined to be the closure of $\iota(G)$ in $\mathfrak{C}(G) \setminus \{\emptyset\}$ with respect to the Fell topology.   
\end{dfn}

Elements of $\tiG$ can be described explicitly as follows. A non-empty subset $\bm{g} \subseteq G$ belongs to $\tiG$ if and only if there exists a net $(g_\alpha)$ in $G$ whose  set of limit points is precisely $\bm{g}$, and such that every subnet of $(g_\alpha)$ has its limit points contained in (and hence equal to) $\bm{g}$. Moreover, given a net $(g_\alpha)$ in $G$, the image net $\iota(g_\alpha)$ converges to $\bm{g} \in \tiG$ in the Fell topology if and only if the conditions above are satisfied. We will use the following fact several times: any convergent net $(g_\alpha)$ in $G$ has a subnet $(g_\beta)$ such that $\iota(g_\beta)$ is convergent in $\tiG$.

Define $\tiGu \coloneq \{\bm{g} \in \tiG \colon \bm{g} \cap \Gu \neq \emptyset\}$. This set will be the unit space of $\tiG$ with the groupoid structure described below. The canonical inclusion map $\iota \colon G \hookrightarrow \tiG$ restricts to an inclusion $\Gu \hookrightarrow \tiGu$, and the image $\iota(\Gu)$ is dense in $\tiGu$. In particular, any element $\bm{x} \in \tiGu$ is contained in the closure $\overline{\Gu}$. For each $\bm{x} \in \tiGu$ there is a unique element $ \pi(\bm{x})\in \bm{x}\cap \Gu$, and the map $\pi \colon \tiGu \to \Gu$ is a proper continuous surjection (since $\iota(\Gu)$ is dense in $\tiGu$). Continuity of the groupoid operations on $G$ implies that any element $\bm{x} \in \tiGu$ is a subgroup of the isotropy group $G_{\pi(\bm{x})}^{\pi(\bm{x})}$. We will often write $\bm{x} = X$ to distinguish elements $x\in\Gu$ from $\bm{x}\in \tiGu$.

We now describe the groupoid operations on $\tiG$. With these operations, $\tiG$ becomes a Hausdorff \'etale groupoid. Note that $r(g_1) = r(g_2)$ and $s(g_1) = s(g_2)$ whenever $g_1, g_2 \in \bm{g}$ and $\bm{g} \in \tiG$. Since elements of $\tiGu$ are subgroups of isotropy groups, we have that for any $\bm{g}\in \tiG$, there are \emph{unique} elements $X, Y\in \tiGu$ such that $\bm{g} = g\cdot X = Y\cdot g$ for all $g\in\bm{g}$. Let $s(\bm{g}) \coloneq X$ and $r(\bm{g}) \coloneq Y$. Explicitly, we have
\begin{equation}\label{RangeSourceinCover}
    s(\bm{g}) = g^{-1} \cdot \bm{g} \quad\quad \text{and} \quad\quad r(\bm{g}) = \bm{g} \cdot g^{-1} \quad\quad \text{for any }  g \in \bm{g}.
\end{equation}
Define the inverse $\bm{g}^{-1} \coloneq \{g^{-1} \colon g \in \bm{g}\}$ and the product $\bm{g}\bm{h} \coloneq \{gh \colon g \in \bm{g}, h \in \bm{h}\}$ whenever $s(\bm{g}) = r(\bm{h})$. Note that
\begin{equation}\label{MultInCover}
\bm{g}\bm{h} = g \cdot \bm{h} = \bm{g} \cdot h \quad \text{ for any } g \in \bm{g}, h \in \bm{h}.
\end{equation}
\vspace{-0.1cm}

Let $f \in \cg$, and view it as a function on $\iota(G) \subseteq \tiG$ via the inclusion $\iota \colon G \to \tiG$. Then $f$ extends (uniquely) to a compactly supported continuous function $\mfi(f)$ on $\tiG$. Moreover, the function $\mfi(f) \in C_c(\tiG)$ can be described explicitly by the formula
\begin{equation}\label{Cond1}
\mfi(f)(\bm{g}) = \sum_{g \in \bm{g}} f(g)
\end{equation}
for all $\bm{g} \in \tiG$ (\cite[Lemma~3.2]{BGHL}). The embedding $\mfi \colon \cg \hookrightarrow C_c(\tiG)$ is a $^*$-homomorphism, and induces canonical embeddings $\mfi_{r} \coloneq C^*_r(\mfi) \colon C^*_r(G) \hookrightarrow C^*_r(\tiG)$ and $\mfi \coloneq C^*(\mfi) \colon C^*(G) \hookrightarrow C^*(\tiG)$ (\cite[Lemma~3.8 \& Corollary~6.8]{BGHL}). 

\vspace{0.3cm}
Now assume that the groupoid $G$ is covered by countably many open bisections. Observe that a unit $x \in \Gu$ is Hausdorff if and only if the inclusion $\iota \colon \Gu \to \tiGu$ is continuous at $x$, if and only if $\pi^{-1}\big(\{x\}\big) = \{\iota(x)\}$. Define $\tiGu_\ess \coloneq \overline{\iota(C)}$, where $C \subseteq \Gu$ is the set of Hausdorff units. This is an invariant subset of the unit space $\tiGu$ of the Hausdorff cover. We define the \emph{essential Hausdorff cover} $\tiG_\ess \coloneq \tiG|_{\tiGu_\ess} = \{\bm{g} \in \tiG \colon s(\bm{g}) \in \tiGu_\ess\}$ (\cite[Definition~4.13]{BGHL}).

\begin{dfn}[\cite{Hume}] \label{dfn:mathcalX(x)}
Let $G$ be an \'etale groupoid that is covered by countably many open bisections. For each $x\in \Gu$ define $\mathcal{X}(x) \coloneq \pi^{-1}\big(\{x\}\big)\cap\tiGu_{\ess}$.
\end{dfn}
By \cite{Hume}, $\mathcal{X}(x)$ is a conjugation-invariant collection of subgroups of $G^x_x$ that is closed in the Chabauty topology (see Section~\ref{sub:CharDens}). A subgroup $X \subseteq G^x_x$ belongs to $\mathcal{X}(x)$ if and only if there exists a net $(x_\alpha) \subset C$ of (Hausdorff) units whose set of limit points is precisely $X$, and such that every subnet of $(x_{\alpha})$ has its limit points contained in $X$.

\PRLsep

\subsection{Isotropy fibres of ideals}\label{prelim:IsoFibIdeals}

Our attention now turns to restriction maps. Let $G$ be an \'etale groupoid. Given a unit $x \in \Gu$, consider the restriction map $\eta_x \colon \cg \to \mathbb{C}[G^x_x]$, where $\mathbb{C}[G^x_x]$ denotes the group algebra of the isotropy group $G^x_x$ (thought of as finitely supported functions $G^{x}_{x}\to\mathbb{C}$). The map $\eta_x$ extends to a completely positive contraction $C^*(G) \to C^*(G^x_x)$, which we will also denote by $\eta_x$. This can be seen using induced representations of the isotropy group (see \cite[Lemma~1.2]{CN22} for example). For a (closed) ideal $I \trianglelefteq C^*(G)$, the image $\eta_x(I)$ is also a (closed) ideal in $C^*(G^x_x)$ (\cite[Lemma~2.1]{CN24}). We write $I_x \coloneq \eta_x(I)$ and, following \cite{CN24}, call $I_x$ the \emph{isotropy fibre} of $I$ at $x$. 

For $g\in G^x_x$, denote by $\delta_{g}\in\mathbb{C}[G^{x}_{x}]$ the function equal to 1 at $g$ and zero everywhere else. Each $g \in G$ induces an isomorphism of group algebras $\text{Ad}_g \colon \mathbb{C}\big[G^{s(g)}_{s(g)}\big] \to \mathbb{C}\big[G^{r(g)}_{r(g)}\big]$; $\delta_{h}\mapsto \delta_{ghg^{-1}}$ and this map extends to an isomorphism of $C^*$-algebras $\Psi_g \colon C^*\big(G^{s(g)}_{s(g)}\big) \to C^*\big(G^{r(g)}_{r(g)}\big)$. A family $(K_x)_{x \in \Gu}$ of ideals satisfying $K_x \trianglelefteq C^*(G_x^x)$ for each $x \in \Gu$ is said to be \emph{invariant} if $\Psi_g(K_{s(g)}) = K_{r(g)}$ for all $g \in G$. Given an ideal $I \trianglelefteq C^*(G)$, its isotropy fibres form an invariant family of ideals. Going in the other direction, if $\mathcal{K} = (K_x)_{x \in \Gu}$ is an invariant family of ideals, we define
\[
I(\mathcal{K}) \coloneq \{a \in C^*(G) \colon \eta_x(a^*a) \in K_x \text{ for all } x \in \Gu\}.
\]
By \cite[Corollary~2.9]{CN24}, $I(\mathcal{K})$ is an ideal in $C^*(G)$. We isolate the following observation from \cite{CN24}.

\begin{lem}\label{FamId}
    Let $\mathcal{K} = (K_x)_{x \in \Gu}$ be an invariant family of ideals, and let $A \subseteq C^*(G)$ be a $C^*$-subalgebra. Then $A \subseteq I(\mathcal{K})$ if and only if $\eta_x(A) \subseteq K_x$ for all $x \in \Gu$.
\end{lem}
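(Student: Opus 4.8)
The plan is to prove the two implications separately, the substantive one being the implication $A \subseteq I(\mathcal{K}) \Rightarrow \eta_x(A) \subseteq K_x$.

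For the reverse implication, suppose $\eta_x(A) \subseteq K_x$ for all $x \in \Gu$. Since $A$ is a sub-$C^*$-algebra it is closed under products and adjoints, so $a^*a \in A$ for every $a \in A$. Hence $\eta_x(a^*a) \in \eta_x(A) \subseteq K_x$ for all $x$, which is precisely the condition defining membership of $a$ in $I(\mathcal{K})$. This gives $A \subseteq I(\mathcal{K})$ immediately, using nothing beyond the definition of $I(\mathcal{K})$ and the fact that $A$ is closed under $a \mapsto a^*a$.

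For the forward implication, suppose $A \subseteq I(\mathcal{K})$ and fix $a \in A$ and $x \in \Gu$. By definition of $I(\mathcal{K})$ we have $\eta_x(a^*a) \in K_x$, and the task is to upgrade this to $\eta_x(a) \in K_x$. Since $\eta_x \colon C^*(G) \to C^*(G^x_x)$ is a completely positive contraction, it is in particular $2$-positive, so the Kadison--Schwarz inequality yields $\eta_x(a)^*\eta_x(a) \le \eta_x(a^*a)$. As $K_x$ is a closed two-sided ideal, it is a hereditary sub-$C^*$-algebra of $C^*(G^x_x)$; combined with $0 \le \eta_x(a)^*\eta_x(a) \le \eta_x(a^*a) \in K_x$ this forces $\eta_x(a)^*\eta_x(a) \in K_x$. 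Finally, passing to the quotient $q \colon C^*(G^x_x) \to C^*(G^x_x)/K_x$, we compute $\norm{q(\eta_x(a))}^2 = \norm{q(\eta_x(a))^*q(\eta_x(a))} = \norm{q(\eta_x(a)^*\eta_x(a))} = 0$, so $q(\eta_x(a)) = 0$, i.e.\ $\eta_x(a) \in \ker q = K_x$. Since $a \in A$ and $x \in \Gu$ were arbitrary, $\eta_x(A) \subseteq K_x$ for all $x$.

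The main obstacle is this forward direction, where one controls only $\eta_x(a^*a)$ and must recover information about $\eta_x(a)$ itself. This is exactly where the complete positivity of the restriction maps $\eta_x$ (hence $2$-positivity and the operator Schwarz inequality) together with the hereditariness of the closed ideals $K_x$ become essential; no further structure of $G$ or of the invariant family $\mathcal{K}$ is required.
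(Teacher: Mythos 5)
Your proof is correct, and the forward direction takes a genuinely different route from the paper's. The paper argues via the algebra structure of $A$: for a positive element $a \in A^+$ one has $a = (a^{1/2})^*(a^{1/2})$ with $a^{1/2} \in A \subseteq I(\mathcal{K})$, so $\eta_x(a) \in K_x$ directly from the definition of $I(\mathcal{K})$, and then one concludes for all of $A$ because $A$ is spanned by its positive elements and $\eta_x$ is linear. You instead work element by element: the Kadison--Schwarz inequality for the completely positive contraction $\eta_x$ gives $\eta_x(a)^*\eta_x(a) \le \eta_x(a^*a)$, hereditariness of the closed ideal $K_x$ then puts $\eta_x(a)^*\eta_x(a)$ in $K_x$, and the $C^*$-identity in the quotient $C^*(G^x_x)/K_x$ forces $\eta_x(a) \in K_x$. (The Schwarz inequality does require the contractivity of $\eta_x$, which the paper records, so this is legitimate.) What your argument buys is a slightly stronger statement: the implication $A \subseteq I(\mathcal{K}) \Rightarrow \eta_x(A) \subseteq K_x$ holds for an arbitrary subset $A$, with the sub-$C^*$-algebra hypothesis only needed for the converse (to ensure $a^*a \in A$). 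The paper's argument is more elementary, using only functional calculus and linearity, but genuinely needs $A$ to be a $C^*$-subalgebra in both directions. Both proofs are complete and correct.
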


\begin{proof}
    Assume that $A$ satisfies $A \subseteq I(\mathcal{K})$. Fix $x \in \Gu$. We have $\eta_x(a) \in K_x$ for all positive elements $a \in A^+$. Since $A$ is spanned by its positive elements, it follows that $\eta_x(a) \in K_x$ for all $a \in A$. Conversely, assume that $\eta_x(A) \subseteq K_x$ for all $x \in \Gu$. For any $a \in A$ we have $\eta_x(a^*a) \in K_x$ for all $x \in \Gu$, and this implies that $a \in I(\mathcal{K})$ as desired.
\end{proof}

\section{Ideals are determined by their isotropy fibres}\label{sec:Ideals}

Let $G$ be an \'etale groupoid and let $\tiG$ denote its Hausdorff cover. Let $I$ be an ideal in the full groupoid $C^*$-algebra $C^{*}(G)$. Denote by $\tilde{I}$ the ideal in $C^{*}(\tiG)$ generated by $\mfi(I)$, where $\mfi \colon C^{*}(G) \hookrightarrow C^{*}(\tiG)$ is the canonical inclusion as in Subsection \ref{prelim:HffCover}. When $I = J$, the ideal $\tilde{I}$ is not to be confused with the ideal appearing in \cite[Definition~4.14]{BGHL} and in \cite{Hume} using the same notation. 
\begin{rmk}
    The ideal $\tilde{I}$ is equal to the closed linear span of $C_{0}(\tiGu)\cdot \big(\mfi(I)\big)\cdot C_{0}(\tiGu)$ since $C^{*}(\tiG)$ is the closed linear span of $\im(\mfi) \cdot C_{0}(\tiGu)$. 
\end{rmk}
Our first observation is that the assignment $I \mapsto \tilde{I}$ is injective.

\begin{prp}\label{Ideal1to1}
    For any ideal $I$ in $C^{*}(G)$, we have $\mfi^{-1}(\tilde{I}) = I$.
\end{prp}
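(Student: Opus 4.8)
The inclusion $I \subseteq \mfi^{-1}(\tilde{I})$ is immediate, so the content is the reverse inclusion, which I would recast representation-theoretically. Fix a representation $\phi \colon C^{*}(G) \to B(\Hh)$ with $\ker \phi = I$ (for instance a faithful representation of $C^{*}(G)/I$ composed with the quotient map). The plan is to produce a representation $\psi \colon C^{*}(\tiG) \to B(\Hh)$ on the \emph{same} Hilbert space satisfying $\psi \circ \mfi = \phi$. Granting this, since $\psi$ is a $*$-homomorphism vanishing on $\mfi(I)$ and $\tilde{I}$ is by definition the closed ideal generated by $\mfi(I)$, the closed ideal $\ker\psi$ contains $\tilde{I}$; hence for $a \in \mfi^{-1}(\tilde{I})$ we get $\phi(a) = \psi(\mfi(a)) = 0$, i.e. $a \in \ker \phi = I$. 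Thus the whole statement reduces to extending $\phi$ along $\mfi$.

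To build $\psi$ I would exploit the groupoid structure of the cover. The set $\iota(\Gu)$ is an \emph{invariant} dense subset of $\tiGu$: if $r(\bm{g}) = \iota(x)$ then, $\iota(x)$ being the trivial subgroup, $\bm{g}$ is forced to be a singleton and $s(\bm{g}) = \iota(s(g)) \in \iota(\Gu)$. Restricting the cover to this invariant subset recovers $G$ itself, $\tiG|_{\iota(\Gu)} = \iota(G) \cong G$, for the same reason. Now disintegrate $\phi$ into a representation of the groupoid $G$ — a quasi-invariant measure $\mu$ on $\Gu$, a measurable Hilbert bundle, and a unitary action — and transport this data to $\tiG$ by pushing $\mu$ forward along the section $\iota \colon \Gu \to \tiGu$. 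The resulting measure $\iota_{*}\mu$ is concentrated on the invariant set $\iota(\Gu)$, where, under $\tiG|_{\iota(\Gu)} \cong G$, the $G$-action \emph{is} a $\tiG$-action on the same bundle. Integrating this up yields a representation $\psi$ of the full algebra $C^{*}(\tiG)$, and since the measure lives exactly on $\iota(\Gu)$ — where $\mfi(f)(\iota(g)) = f(g)$ — one checks $\psi \circ \mfi = \phi$ on the nose.

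The main obstacle is the well-definedness of $\psi$: one must verify that a representation of $C^{*}(G) = C^{*}(\tiG|_{\iota(\Gu)})$ genuinely integrates to a $*$-representation of the \emph{full} $C^{*}(\tiG)$, even though $\iota(\Gu)$ is only dense, neither open nor closed, in $\tiGu$. The two points needing care are that $\iota_{*}\mu$ is quasi-invariant for $\tiG$ (it is, being concentrated where the $\tiG$-action reduces to the $G$-action) and that the cross relations in $C_{c}(\tiG)$ are respected. This is where the structural identity that $C^{*}(\tiG)$ is the closed linear span of $\mfi(C^{*}(G))\,C_{0}(\tiGu)$ enters: setting $\psi(\mfi(f)c) = \phi(f)\,M(c)$, where $M \colon C_{0}(\tiGu) \to B(\Hh)$ is the extension of $\phi|_{C_{0}(\Gu)}$ determined by the spectral measure pushed forward along $\iota$, reduces the check to compatibility on $\mfi(\cg)\,C_{c}(\tiGu)$. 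I would also flag \emph{why} a softer argument fails: the naive attempt to average via a conditional expectation $C^{*}(\tiG) \to \mfi(C^{*}(G))$ built from the same section breaks down already at the level of $C_{0}$-functions, because $\iota(\Gu)$ is not closed and so $\iota$ does not induce a map $C_{0}(\tiGu) \to C_{0}(\Gu)$; it is precisely this non-Hausdorff defect that forces a representation-extension argument rather than an averaging one.
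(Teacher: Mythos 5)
Your reduction is exactly the paper's argument: pick a representation $\phi$ of $C^{*}(G)$ with $\ker\phi = I$, extend it along $\mfi$ to a representation $\psi$ of $C^{*}(\tiG)$ on the same Hilbert space, and observe that $\ker\psi \supseteq \tilde{I}$ forces $\mfi^{-1}(\tilde{I})\subseteq \ker\phi = I$. The only substantive step --- the existence of $\psi$ with $\psi\circ\mfi=\phi$ --- is precisely \cite[Lemma~6.7]{BGHL}, which the paper invokes as a black box, so at the level of this proposition you and the paper are doing the same thing.

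Where you diverge is in attempting to re-prove that extension lemma, and there the sketch is not yet a proof. Your structural observations are correct and are indeed the heart of the matter: $\iota(\Gu)$ is invariant because $r(\bm{g})=\iota(x)$ forces $\bm{g}=\{x\}\cdot g$ to be a singleton, and $\tiG|_{\iota(\Gu)}\cong G$. But the disintegration route quietly assumes Renault's disintegration theorem for the non-Hausdorff groupoid $G$, which requires second countability --- not a hypothesis of this proposition, which is stated for arbitrary \'etale $G$ --- and is itself delicate in the non-Hausdorff setting. More importantly, you explicitly flag the well-definedness and multiplicativity of $\psi$ on $\overline{\mathrm{span}}\,\mfi(C^{*}(G))C_{0}(\tiGu)$ as ``the main obstacle'' and then do not discharge it: the formula $\psi(\mfi(f)c)=\phi(f)M(c)$ must be shown compatible with all relations of the form $\mfi(f)c = c'\mfi(f')$ holding in $C_{c}(\tiG)$, and that verification is exactly the content of the cited lemma. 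If you are willing to quote \cite[Lemma~6.7]{BGHL}, your proof is complete and coincides with the paper's; if you intend to prove the extension yourself, that check still has to be written out.
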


\begin{proof}
    Clearly we have $I\subseteq \mfi^{-1}(\tilde{I})$. To prove the reverse containment, let $\rho \colon C^{*}(G)\to B(H)$ be a $^*$-representation such that $\ker(\rho) = I$. By \cite[Lemma~6.7]{BGHL}, there exists a $^*$-representation $\tilde{\rho} \colon C^{*}(\tiG)\to B(H)$ such that $\tilde{\rho}\circ \mfi = \rho$. Therefore, $\mfi(I) = \mfi(\ker(\rho))\subseteq \ker(\tilde{\rho})$ and hence $\tilde{I}\subseteq \ker(\tilde{\rho})$. It follows that $\mfi^{-1}(\tilde{I})\subseteq \mfi^{-1}(\ker(\tilde{\rho})) = \ker(\rho) = I$, proving the proposition.
\end{proof} 

Let $X \in \tiGu$ and write $ x \coloneq \pi(X) \in \Gu$. By definition, $X$ is equal to a subgroup of the isotropy group $G^{x}_{x}$. By \eqref{RangeSourceinCover} it is easily seen that the isotropy group $\tiG^{X}_{X} \subseteq \tiG$ is equal to the quotient $N_{X}/X$, where $N_{X} = \{g\in G^{x}_{x} \colon  gXg^{-1} = X\}$ is the normaliser of $X$ in $G^{x}_{x}$. Denote by $E_{N_{X}} \colon C^*(G^x_x) \to C^*(N_{X})$ the canonical conditional expectation and by $Q_{X} \colon C^{*}(N_{X})\to C^{*}(N_{X}/X)$ the $^*$-homomorphism induced by the quotient map $N_{X} \to N_{X}/X$.

\begin{lem}
    \label{restricted_restriction}
    Let $X\in \tiGu$ and write $x \coloneq \pi(X)\in \Gu$. The following diagram commutes.
    \[
    \begin{tikzcd}
    C^{*}(G) \arrow[r, "\mfi"] \arrow[d, "\eta_{x}"] & C^{*}(\tiG) \arrow[d, "\eta_{X}"] \\
    C^{*}(G^{x}_{x})     \arrow[r, "Q_X \circ E_{N_X}"]           & C^{*}(\tiG^{X}_{X})                        
    \end{tikzcd}
    \]
\end{lem}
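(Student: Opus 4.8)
The plan is to verify commutativity on the dense $*$-subalgebra $\cg \subseteq C^*(G)$ and then extend by continuity. All four maps in the diagram are bounded: $\mfi$ and $Q_X$ are $*$-homomorphisms, while $\eta_x$, $\eta_X$ and $E_{N_X}$ are completely positive contractions. Hence both composites $\eta_X \circ \mfi$ and $Q_X \circ E_{N_X} \circ \eta_x$ are continuous, and it suffices to check that they agree on every $f \in \cg$, evaluating the resulting elements of $\mathbb{C}[\tiG^X_X] \subseteq C^*(\tiG^X_X)$ on the cosets of $N_X/X$.

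First I would compute the top-right composite. Since $\tiG$ is Hausdorff, $\eta_X$ restricts a function in $C_c(\tiG)$ to the isotropy group $\tiG^{X}_{X}$, and by the identification $\tiG^{X}_{X} = N_X / X$ recalled before the statement its elements are exactly the cosets $\bm{h} = nX$ with $n \in N_X$. Using the explicit formula \eqref{Cond1} for the embedding $\mfi$, I obtain, for each $n \in N_X$,
\[
\big(\eta_X(\mfi(f))\big)(nX) = \mfi(f)(nX) = \sum_{g \in nX} f(g) = \sum_{t \in X} f(nt).
\]

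Next I would compute the bottom-left composite. The restriction $\eta_x(f) \in \mathbb{C}[G^x_x]$ is simply $f|_{G^x_x}$. On the group-algebra level the canonical conditional expectation $E_{N_X}$ is restriction of functions to the subgroup $N_X$, so $E_{N_X}(\eta_x(f)) = f|_{N_X}$, and the $*$-homomorphism $Q_X$ induced by $N_X \to N_X/X$ pushes a finitely supported function forward by summing over cosets, $Q_X(\psi)(nX) = \sum_{t \in X} \psi(nt)$. Combining these,
\[
\big(Q_X(E_{N_X}(\eta_x(f)))\big)(nX) = \sum_{t \in X} f|_{N_X}(nt) = \sum_{t \in X} f(nt),
\]
using that $n \in N_X$ and $t \in X \subseteq N_X$ force $nt \in N_X$. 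This matches the expression obtained above for every coset $nX$, completing the verification on $\cg$ and hence, by density and continuity, on all of $C^*(G)$.

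The only step that will require care is pinning down how $E_{N_X}$ and $Q_X$ act on finitely supported functions, namely that $E_{N_X}$ is restriction to $N_X$ (i.e.\ $\delta_g \mapsto \delta_g$ for $g \in N_X$ and $\delta_g \mapsto 0$ otherwise) and that $Q_X$ is coset-summation. These are the standard descriptions of the canonical conditional expectation onto a subgroup and of the map induced by a group quotient, so one should simply confirm that the expectation named in the statement is this canonical one. Once these identifications are fixed, the proof reduces to matching the two coset-sum formulas above, with no further analytic input beyond the density argument.
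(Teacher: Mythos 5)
Your proposal is correct and follows exactly the route the paper takes: the paper's proof simply asserts that equality is "verified readily on the dense $*$-subalgebra $\mathscr{C}_{c}(G)$" and extends by continuity, and your coset-sum computation of both composites at $nX \in N_X/X$ is precisely the verification being left to the reader. The identifications you flag ($E_{N_X}$ as restriction to $N_X$, $Q_X$ as coset-summation, and $\tiG^X_X = N_X/X$) are all the standard ones the paper is using, so no further care is needed.
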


\begin{proof}
     Equality is readily verified on the dense $^*$-subalgebra $\cg$. The lemma then follows by continuity.
\end{proof}
 
We can now describe the isotropy fibres of the ideals $\tilde{I}$ in $C^*(\tiG)$ in terms of $I$.

\begin{prp}\label{DescribeIsoFib}
    Let $I$ be an ideal in $C^*(G)$. Let $X \in \tiGu$ and write $x \coloneq \pi(X) \in \Gu$. Then,
    \[
    \tilde{I}_X = \overline{Q_{X}\circ E_{N_{X}}(I_{x})} \trianglelefteq C^*(\tiG^X_X).
    \]
\end{prp}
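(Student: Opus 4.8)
The plan is to prove the equality by establishing the two inclusions separately, the essential tools being the description of $\tilde{I}$ from the Remark above, the commuting square of \Cref{restricted_restriction}, and the fact that the fibre map $\eta_X$ is a module map over $C_0(\tiGu)$. The one nontrivial ingredient I would isolate first is this module property: for $f,h \in C_0(\tiGu)$ and $a \in C^*(\tiG)$ one should have $\eta_X(f a h) = f(X)\,h(X)\,\eta_X(a)$. I would check this on the dense $^*$-subalgebra $C_c(\tiG)$, where $\eta_X$ is restriction to the isotropy group $\tiG^X_X$. Since $C_0(\tiGu)$ acts by $(f a h)(\bm{g}) = f(r(\bm{g}))\,a(\bm{g})\,h(s(\bm{g}))$ and every $\bm{g}\in\tiG^X_X$ satisfies $r(\bm{g}) = s(\bm{g}) = X$, one gets $\eta_X(fah)(\bm{g}) = f(X)\,h(X)\,a(\bm{g})$, and continuity of $\eta_X$ extends the identity to all of $C^*(\tiG)$.

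For the inclusion $\overline{Q_X\circ E_{N_X}(I_x)} \subseteq \tilde{I}_X$ I would argue directly, with no need for the module property. Every element of $I_x = \eta_x(I)$ has the form $\eta_x(c)$ for some $c\in I$, and \Cref{restricted_restriction} gives $Q_X\circ E_{N_X}(\eta_x(c)) = \eta_X(\mfi(c))$. As $\mfi(c)\in\mfi(I)\subseteq\tilde{I}$, this element lies in $\eta_X(\tilde{I}) = \tilde{I}_X$. Hence $Q_X\circ E_{N_X}(I_x)\subseteq \tilde{I}_X$, and since $\tilde{I}_X$ is a (closed) ideal by \cite[Lemma~2.1]{CN24}, taking closures yields the inclusion.

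For the reverse inclusion $\tilde{I}_X\subseteq \overline{Q_X\circ E_{N_X}(I_x)}$ I would invoke the Remark to write $\tilde{I}$ as the closed linear span of $C_0(\tiGu)\,\mfi(I)\,C_0(\tiGu)$. Applying the module property to a spanning element $f\,\mfi(c)\,h$ with $f,h\in C_0(\tiGu)$ and $c\in I$ gives $\eta_X(f\,\mfi(c)\,h) = f(X)\,h(X)\,\eta_X(\mfi(c)) = f(X)\,h(X)\,Q_X\circ E_{N_X}(\eta_x(c))$, which lies in the linear subspace $Q_X\circ E_{N_X}(I_x)$. Thus $\eta_X$ sends the span into $Q_X\circ E_{N_X}(I_x)$, and by continuity $\tilde{I}_X = \eta_X(\tilde{I})$ lands in its closure. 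The main obstacle is genuinely just the verification of the module property of $\eta_X$; once it is in hand both inclusions are routine. The only subtlety worth flagging is that $Q_X\circ E_{N_X}(I_x)$ need not be closed, which is exactly why the statement and the first inclusion are phrased via its closure.
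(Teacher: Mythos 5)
Your proof is correct, and for the harder inclusion it takes a genuinely different route from the paper. The easy inclusion $\overline{Q_X\circ E_{N_X}(I_x)}\subseteq \tilde{I}_X$ is handled identically in both arguments, via Lemma~\ref{restricted_restriction} and closedness of $\tilde{I}_X$. For the reverse inclusion, the paper does not use the $C_0(\tiGu)$-bimodule property of $\eta_X$ at all: it instead verifies that the family $K_X \coloneq \overline{Q_X\circ E_{N_X}(I_x)}$ is an \emph{invariant} family of ideals (this requires the intertwining identity $\Psi_{\bm{g}}\circ Q_{s(\bm{g})}\circ E_{N_{s(\bm{g})}} = Q_{r(\bm{g})}\circ E_{N_{r(\bm{g})}}\circ \Psi_{g_0}$), forms the ideal $I(\mathcal{K})$ of \cite[Corollary~2.9]{CN24}, and applies Lemma~\ref{FamId} twice: once to get $\mfi(I)\subseteq I(\mathcal{K})$, hence $\tilde{I}\subseteq I(\mathcal{K})$ because $I(\mathcal{K})$ is an ideal containing $\mfi(I)$, and once more to conclude $\tilde{I}_X\subseteq K_X$. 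Your argument replaces all of this with the description of $\tilde{I}$ as the closed linear span of $C_0(\tiGu)\,\mfi(I)\,C_0(\tiGu)$ from the Remark, together with the identity $\eta_X(fah)=f(X)h(X)\eta_X(a)$, which you correctly reduce to a pointwise computation on $C_c(\tiG)$ (with $f,h\in C_c(\tiGu)$, then extended by density and continuity). What each approach buys: yours is more elementary and avoids both the invariance computation and the appeal to \cite[Corollary~2.9]{CN24}, but it leans on the Remark, whose justification (using that $\mfi(I)$ is an ideal in $\im(\mfi)$ and that $C^*(\tiG)$ is the closed span of $\im(\mfi)C_0(\tiGu)$) the paper only sketches; the paper's version keeps the argument entirely inside the invariant-family framework of \cite{CN24} that it needs anyway for Theorem~\ref{IdealFib}, and as a by-product records that $(K_X)_{X\in\tiGu}$ is itself an invariant family. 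Your closing observation that $Q_X\circ E_{N_X}(I_x)$ need not be closed, so only its closure can be expected to equal $\tilde{I}_X$, is exactly the right point to flag.
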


\begin{proof}
 For each $X \in \tiGu$ define the ideal $K_X \coloneq \overline{Q_{X} \circ E_{N_{X}}(I_{x})} \trianglelefteq C^*(\tiG^X_X)$. Let $\bm{g} \in \tiG$, and observe that $\Psi_{\bm{g}} \circ Q_{s(\bm{g})} \circ E_{N_{s(\bm{g})}} = Q_{r(\bm{g})} \circ E_{N_{r(\bm{g})}} \circ \Psi_{g_0}$ for any $g_0 \in \bm{g}$, where $\Psi_{\bm{g}}$ and $\Psi_{g_0}$ are as defined in subsection \ref{prelim:IsoFibIdeals} (this equality can be seen using \eqref{MultInCover}). Since $\Psi_{\bm{g}}$ is an isomorphism, and since $(I_x)_{x \in \Gu}$ is an invariant family of ideals, it follows that
    \begin{align*}
    \Psi_{\bm{g}}\big(K_{s(\bm{g})}\big)
    &= \overline{\Psi_{\bm{g}}\Big(Q_{s(\bm{g})} \circ E_{N_{s(\bm{g})}}\big(I_{s(g_0)}\big)\Big)}
    = \overline{Q_{r(\bm{g})} \circ E_{N_{r(\bm{g})}}\Big(\Psi_{g_0}\big(I_{s(g_0)}\big)\Big)}\\
    &= \overline{Q_{r(\bm{g})} \circ E_{N_{r(\bm{g})}}\big(I_{r(g_0)}\big)}
    = K_{r(\bm{g})}.
    \end{align*}
    Hence, $\mathcal{K} \coloneq (K_X)_{X \in \tiGu}$ is an invariant family of ideals. 
    
    By Lemma \ref{restricted_restriction}, we have $K_X = \overline{\eta_X\big(\mfi(I)\big)}$, and therefore $K_X \subseteq \tilde{I}_X$ for all $X \in \tiGu$. By Lemma \ref{FamId}, this equality also implies that $\mfi(I) \subseteq I(\mathcal{K})$, and hence $\tilde{I} \subseteq I(\mathcal{K})$. Another application of Lemma \ref{FamId} gives $\tilde{I}_X \subseteq K_X$ for all $X \in \tiGu$, as desired.
\end{proof}

We now prove that ideals in $C^*(G)$ are determined by their isotropy fibres whenever $G$ is an amenable second-countable \'etale groupoid. This generalises \cite[Theorem~2.10]{CN24} to the setting of non-Hausdorff groupoids. Our proof utilises this theorem, which in turn uses known cases of the Effros-Hahn conjecture \cite{IW08}. Therefore, it is necessary for us to only consider amenable second-countable groupoids. The theorem does not hold in the non-amenable setting -- counterexamples are provided in \cite[Examples 2.11 \& 2.12]{CN24}.

\begin{thm}\label{IdealFib}
    Let $G$ be an amenable second-countable (non-Hausdorff) \'etale groupoid and suppose $I$ and $K$ are ideals in $C^*(G)$. Then, $I = K$ if and only if $I_{x} = K_{x}$ for all $x\in \Gu$.
\end{thm}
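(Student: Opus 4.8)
The plan is to reduce the statement to the Hausdorff case via the embedding $\mfi$ and the assignment $I \mapsto \tilde{I}$, exploiting the fact that the Hausdorff cover $\tiG$ is itself an amenable second-countable \'etale groupoid to which \cite[Theorem~2.10]{CN24} applies directly. The forward implication is trivial, since $I = K$ immediately gives $I_x = K_x$ for all $x$ via the maps $\eta_x$. For the converse, suppose $I_x = K_x$ for all $x \in \Gu$; I want to conclude $I = K$.

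First I would invoke \Cref{Ideal1to1}, which tells us that the map $I \mapsto \tilde{I}$ is injective (indeed $\mfi^{-1}(\tilde{I}) = I$). Hence it suffices to prove $\tilde{I} = \tilde{K}$ as ideals in $C^*(\tiG)$. Next, I would use \Cref{DescribeIsoFib}, which computes the isotropy fibres of $\tilde{I}$ in terms of those of $I$: for each $X \in \tiGu$ with $x = \pi(X)$,
\[
\tilde{I}_X = \overline{Q_X \circ E_{N_X}(I_x)}, \qquad \tilde{K}_X = \overline{Q_X \circ E_{N_X}(K_x)}.
\]
Since by hypothesis $I_x = K_x$ for every $x \in \Gu$, and since the right-hand sides depend on $I$ and $K$ only through the fibres $I_x$ and $K_x$, we obtain $\tilde{I}_X = \tilde{K}_X$ for all $X \in \tiGu$.

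Finally I would apply \cite[Theorem~2.10]{CN24} to the groupoid $\tiG$. For this to be legitimate I must check that $\tiG$ satisfies the hypotheses of that theorem, namely that it is a Hausdorff, second-countable, amenable \'etale groupoid. That $\tiG$ is Hausdorff and \'etale is recorded in \Cref{HffCover}; second-countability of $\tiG$ follows from second-countability of $G$ (the Fell topology on the space of closed subsets of a second-countable locally compact space is second-countable, and $\tiG$ is a subspace); and amenability of $\tiG$ should follow from amenability of $G$, which I expect to be the main technical point to justify carefully, likely citing the relevant statement in \cite{BGHL} on the Hausdorff cover preserving amenability. Granting these hypotheses, \cite[Theorem~2.10]{CN24} says ideals in $C^*(\tiG)$ are determined by their isotropy fibres, so $\tilde{I}_X = \tilde{K}_X$ for all $X$ forces $\tilde{I} = \tilde{K}$. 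Combined with the injectivity from \Cref{Ideal1to1}, this yields $I = K$, completing the proof.

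I expect the only real obstacle to be verifying that the Hausdorff cover inherits amenability and second-countability from $G$, so that the cited Hausdorff theorem genuinely applies; once that is in place, the argument is a clean chain of the three preparatory results (\Cref{Ideal1to1}, \Cref{restricted_restriction}, \Cref{DescribeIsoFib}) feeding into the Hausdorff case.
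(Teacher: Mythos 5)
Your proposal is correct and follows essentially the same route as the paper's proof: reduce to the Hausdorff cover via \Cref{DescribeIsoFib} and \Cref{Ideal1to1}, then apply \cite[Theorem~2.10]{CN24} to $\tiG$, with amenability and second-countability of $\tiG$ supplied by \cite[Theorem~6.5]{BGHL}.
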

\begin{proof}
    The forwards implication is trivial. Assume $I_{x} = K_{x}$ for all $x\in \Gu$. Proposition \ref{DescribeIsoFib} implies that $\tilde{I}_{X} = \tilde{K}_{X}$ for all $X\in \tiGu$. Since $G$ is amenable and second-countable, so is $\tiG$ by \cite[Theorem~6.5]{BGHL}. Moreover, the groupoid $\tiG$ is Hausdorff, and therefore \cite[Theorem~2.10]{CN24} ensures that $\tilde{I} = \tilde{K}$. Finally, Proposition \ref{Ideal1to1} gives $I = \mfi^{-1}(\tilde{I}) = \mfi^{-1}(\tilde{K}) = K$.
\end{proof}

\section{Characterisation of density}\label{sub:CharDens}
In this section, we assume that $G$ is an amenable second-countable (not necessarily Hausdorff) \'etale groupoid, and therefore $C^*(G) = C^*_r(G)$ by \cite[Corollary~6.10]{BGHL}. We denote by $J$ the singular ideal in $C^*_r(G)$. Theorem \ref{IdealFib} yields the following characterisation for the density of $J \cap \cg$ in $J$.
\begin{prp}\label{DensChar}
    Let $G$ be an amenable and second-countable \'etale groupoid. Then, $J\cap \cg$ is dense in $J$ if and only if $J_{x} \cap \mathbb{C}[G^{x}_{x}]$ is dense in $J_{x}$ for all $x\in \Gu$.
\end{prp}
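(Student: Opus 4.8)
The plan is to realise $J\cap\cg$ as generating a closed ideal and to compare it with $J$ fibrewise through Theorem \ref{IdealFib}. First I would set $I \coloneq \overline{J\cap\cg}$, the closure taken in $C^*(G) = C^*_r(G)$. Since $J$ is a closed ideal and $J\cap\cg$ is an ideal of the dense $^*$-subalgebra $\cg$, the subspace $I$ is a closed ideal of $C^*(G)$ with $I\subseteq J$, and $J\cap\cg$ is dense in $J$ exactly when $I=J$. By Theorem \ref{IdealFib} this happens if and only if $I_x=J_x$ for all $x\in\Gu$. Moreover $I_x=\overline{\eta_x(J\cap\cg)}$: the fibre $I_x=\eta_x(I)$ is closed and contains $\eta_x(J\cap\cg)$, giving $\overline{\eta_x(J\cap\cg)}\subseteq I_x$, while continuity of the contraction $\eta_x$ gives $I_x=\eta_x(\overline{J\cap\cg})\subseteq\overline{\eta_x(J\cap\cg)}$. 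Hence the proposition reduces to the single fibrewise identity
\[
\overline{\eta_x(J\cap\cg)}=\overline{J_x\cap\mathbb{C}[G^x_x]}\qquad(x\in\Gu),
\]
after which $I_x=J_x$ for all $x$ is equivalent to $J_x\cap\mathbb{C}[G^x_x]$ being dense in $J_x$ for all $x$.

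One inclusion is immediate. Since $\eta_x(J)=J_x$ and $\eta_x(\cg)=\mathbb{C}[G^x_x]$, we have $\eta_x(J\cap\cg)\subseteq J_x\cap\mathbb{C}[G^x_x]$, hence $\overline{\eta_x(J\cap\cg)}\subseteq\overline{J_x\cap\mathbb{C}[G^x_x]}$. The substance of the proof, and its main obstacle, is the reverse inclusion
\[
J_x\cap\mathbb{C}[G^x_x]\subseteq\overline{\eta_x(J\cap\cg)},
\]
a lifting statement: every finitely supported $b\in J_x$ should be approximated by restrictions $\eta_x(f)$ of singular functions $f\in J\cap\cg$. I would prove it inside the Hausdorff cover. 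Using the embedding $\mfi\colon\cg\hookrightarrow C_c(\tiG)$ and the Hausdorffness of $\tiG_\ess$, a function $f\in\cg$ lies in $J$ precisely when its continuous image $\mfi(f)(\bm g)=\sum_{g\in\bm g}f(g)$ vanishes identically on $\tiG_\ess$; this is the realisation of the essential algebra inside $C^*(\tiG_\ess)$ from \cite{BGHL}. The relevant vanishing for $b$ is supplied by results already proved: applying Proposition \ref{DescribeIsoFib} to $I=J$ gives $\tilde{J}_X=\overline{Q_X\circ E_{N_X}(J_x)}$, and since $\mfi(J)$ restricts to $0$ on $\tiG_\ess$ the fibre $\tilde{J}_X$ is trivial whenever $X\in\tiGu_\ess$ with $\pi(X)=x$; hence $Q_X\circ E_{N_X}(b)=0$ for all such $X$.

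With this information I would build the lift explicitly. Writing $b=\sum_{g\in F}c_g\delta_g$ with $F\subseteq G^x_x$ finite, I would choose open bisections $U_g\ni g$ meeting $G^x_x$ only in $\{g\}$, and $\phi_g\in C_c(U_g)$ with $\phi_g(g)=c_g$ supported in a small neighbourhood of the isotropy over $x$; putting $f\coloneq\sum_{g\in F}\phi_g$ then yields $\eta_x(f)=b$. Because a bisection meets each $\bm g\in\tiG$ in at most one point, $\mfi(f)(\bm g)$ is a finite sum of values of the $\phi_g$. At an isotropy point $\bm g=g_0X$ lying over $x$ (so $X\in\tiGu_\ess$, $\pi(X)=x$, $g_0\in N_X$) one computes $\mfi(f)(\bm g)=\sum_{g\in F\cap g_0X}c_g$, which is exactly the coefficient of $Q_X\circ E_{N_X}(b)$ at the coset $g_0X$ and therefore vanishes by the previous paragraph.

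The hard part is to promote this cancellation from the central isotropy fibre to the whole of $\tiG_\ess$: for $\bm g\in\tiG_\ess$ whose source lies over a unit $y\neq x$ near $x$, or which is not an isotropy element, the finite sum $\mfi(f)(\bm g)$ must also vanish. This forces one to choose the germs $\phi_g$ compatibly with the manner in which the bisections $U_g$ merge in the Hausdorff cover near $x$ — information encoded in the nets defining points of $\tiG_\ess$ — so that the coset cancellations persist throughout a neighbourhood. I expect this can be arranged only in the limit, producing functions $f_n\in J\cap\cg$ with $\eta_x(f_n)\to b$ rather than a single exact lift, which is still enough for the displayed identity. Carrying out this merging analysis, with the explicit description of $J_x$ from \cite[Theorem~5.5]{Hume}, is where the real work lies.
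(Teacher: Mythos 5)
Your reduction is exactly the one the paper uses: set $I\coloneq\overline{J\cap\cg}$, observe $I_x=\overline{\eta_x(J\cap\cg)}$, and apply Theorem \ref{IdealFib} to convert $I=J$ into a fibrewise statement. That part is correct. The gap is the inclusion $J_x\cap\mathbb{C}[G^x_x]\subseteq\overline{\eta_x(J\cap\cg)}$, which you correctly isolate as the substance of the proof but do not establish. The paper does not prove it from scratch either: it quotes \cite[Theorem~5.5~\&~Proposition~5.19]{Hume}, which give the exact equality $\eta_x(J\cap\cg)=J_x\cap\mathbb{C}[G^x_x]$, i.e.\ every finitely supported element of $J_x$ admits a genuine lift to a function in $J\cap\cg$, not merely an approximate one.

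Your attempted construction of the lift — bisections $U_g$ through the support of $b$ and functions $\phi_g\in C_c(U_g)$ with $f=\sum_g\phi_g$ — only controls the values of $\mfi(f)$ at elements $\bm g\in\tiG_\ess$ lying over the single unit $x$. Membership of $f$ in $J$ requires $\mfi(f)$ to vanish on all of $\tiG_\ess$, and as you yourself note this forces cancellations $\sum_{i\in I}f(g_i)=0$ for every index set $I$ of bisections that merge near $x$ in the Hausdorff cover, uniformly over a neighbourhood of $x$. You explicitly leave this ``merging analysis'' unfinished (``is where the real work lies''), so as written the proposal is not a complete proof. That cancellation argument is precisely the content of \cite[Proposition~5.19]{Hume} — compare the choice of the neighbourhood $W$ and the sums $\sum_{i\in I}f(g_i)$ in Lemma \ref{Cutting} of this paper, which is the same mechanism run in the opposite direction. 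Citing that result closes the gap and recovers the paper's proof verbatim.
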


\begin{proof}
    Let $I \coloneq \overline{J\cap\mathscr{C}_{c}(G)}$. Since $\cg$ is dense in $C^{*}_{r}(G)$, $I$ is an ideal and $I\subseteq J$. By \cite[Theorem~5.5 \& Proposition~5.19]{Hume} we have $\eta_{x}(J\cap\mathscr{C}_{c}(G)) = J_{x}\cap\mathbb{C}[G^{x}_{x}]$. Continuity of $\eta_{x}$ then implies $J_{x}\cap\mathbb{C}[G^{x}_{x}]\subseteq \eta_{x}(I) \subseteq \overline{J_{x}\cap\mathbb{C}[G^{x}_{x}]} $. Since $\eta_{x}(I) = I_x$ is closed, it follows that $I_{x} = \overline{J_{x}\cap\mathbb{C}[G^{x}_{x}]}$. Therefore, Theorem \ref{IdealFib} shows that $I=J$ if and only if $\overline{J_{x} \cap \mathbb{C}[G^{x}_{x}]} = J_{x}$ for all $x\in \Gu$.
\end{proof}

\begin{rmk}
    The ``only if'' direction of Proposition \ref{DensChar} holds for all \'etale groupoids $G$ because $\eta_{x}(J\cap\mathscr{C}_{c}(G)) \subseteq J_{x}\cap\mathbb{C}[G^{x}_{x}]$ holds trivially ($J_{x}$ here means the isotropy fibre from the reduced groupoid $C^{*}$-algebra). It can be shown using the calculation of isotropy fibres of the singular ideal in \cite[Theorem~5.5]{Hume} that the first example in \cite{MS25} of a non-Hausdorff groupoid $G$ with $J\cap\mathscr{C}_{c}(G)$ not dense in $J$ has an isotropy fibre $J_{\epsilon}\neq \{0\}$ with $J_{\epsilon}\cap\mathbb{C}[\Gamma] = \{0\}$. Therefore, this remark may be applied to this example to provide an alternate proof of non-density.
\end{rmk}

Following \cite{Hume}, we now explain how the density question can be reduced to one about explicit ideals in group $C^{*}$-algebras. Let $\Gamma$ be an amenable discrete group. Let $\text{Sub}(\Gamma)$ denote the set of all subgroups of $\Gamma$. Consider the map
\begin{align*}
    \text{Sub}(\Gamma) &\longrightarrow \{0, 1\}^\Gamma\\
    X &\longmapsto \mathbbm{1}_X
\end{align*}
where $\mathbbm{1}_X \colon \Gamma \to \{0,1\}$ is the indicator function, and equip $\{0, 1\}^\Gamma$ with the product topology. We equip $\text{Sub}(\Gamma)$ with the topology induced by this map, known as the \emph{Chabauty topology}, making $\text{Sub}(\Gamma)$ a Stone space (this topology agrees with the Fell topology of \cite{Fell}).

\begin{sloppypar}
    For a subgroup $X \in \text{Sub}(\Gamma)$, write $\Gamma/X$ for the set of left-cosets. Let $\lambda_{\Gamma/X} \colon \Gamma \to B\big(\ell^2(\Gamma/X)\big)$ denote the associated quasi-regular representation as described in \cite{BK20}. Concretely, we have $\lambda_{\Gamma/X}(g)\delta_{hX} \coloneq \delta_{ghX}$ for $g \in \Gamma$ and $hX \in \Gamma/X$, where $\delta_{hX} \in \ell^2(\Gamma/X)$ denotes a standard basis vector. We also write $\lambda_{\Gamma/X}$ for the associated $^*$-representation on $C^*_r(\Gamma) = C^*(\Gamma)$.
\end{sloppypar}

Let $\mathcal{X} \subseteq \text{Sub}(\Gamma)$. Following \cite{Hume}, we let the ideal $J_{\Gamma,\mathcal{X}}$ denote the intersection $ \bigcap_{X\in\mathcal{X}} \ker(\lambda_{\Gamma/X})$ inside $C^{*}_{r}(\Gamma) = C^*(\Gamma)$. We will primarily be interested in the case where $\mathcal{X} \subseteq \text{Sub}(\Gamma)$ is closed in the Chabauty topology and invariant under conjugation by elements of $\Gamma$.

\begin{dfn}\label{DefnPropD}
    Let $\Gamma$ be an amenable discrete group. Define $\mathcal{D}_\Gamma$ to be the set of all closed conjugation-invariant sets of subgroups $\mathcal{X} \subseteq \text{Sub}(\Gamma)$ for which $J_{\Gamma,\mathcal{X}}\cap\mathbb{C}[\Gamma]$ is dense in $J_{\Gamma,\mathcal{X}}$. We will say $\Gamma$ satisfies the \emph{Density Property} if $\mathcal{D}_\Gamma$ contains all closed conjugation-invariant sets of subgroups.
\end{dfn}

Let $G$ be an amenable second-countable \'etale groupoid. For $x\in \Gu$, recall from Definition~\ref{dfn:mathcalX(x)} that $\mathcal{X}(x) \coloneq \pi^{-1}\big(\{x\}\big)\cap\tiGu_{\ess}$ is a closed conjugation-invariant collection of subgroups of the isotropy group $G^x_x$. The isotropy groups of $G$ are amenable, and in this case \cite[Definition~5.1 \& Theorem~5.5]{Hume} showed that $J_{x} = J_{G^{x}_{x},\mathcal{X}(x)}$ for every $x \in \Gu$. The following theorem is then immediate.

\begin{thm}\label{ReductionToIso}
    Let $G$ be an amenable and second-countable \'etale groupoid. Then $J\cap\cg$ is dense in $J$ if and only if $\mathcal{X}(x) \in \mathcal{D}_{G^x_x}$ for all $x \in \Gu$. In particular, if all the isotropy groups $G^x_x$ satisfy the Density Property, then $J\cap\cg$ is dense in $J$.
\end{thm}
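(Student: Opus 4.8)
The plan is to obtain the statement by directly chaining together Proposition~\ref{DensChar} with the identification of the isotropy fibres of $J$ recorded just above the theorem. Since each ingredient is already in place, the argument is essentially a translation of the fibrewise density condition into the language of Definition~\ref{DefnPropD}, and I expect it to be short.

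First I would apply Proposition~\ref{DensChar}, which reduces the assertion that $J \cap \cg$ is dense in $J$ to the fibrewise statement that $J_x \cap \mathbb{C}[G^x_x]$ is dense in $J_x$ for every $x \in \Gu$. Next, using that $G$ is amenable and second-countable, every isotropy group $G^x_x$ is amenable, so by \cite[Definition~5.1 \& Theorem~5.5]{Hume} the isotropy fibre $J_x$ coincides with the ideal $J_{G^x_x, \mathcal{X}(x)}$. Substituting this identity, the fibrewise condition becomes the requirement that $J_{G^x_x, \mathcal{X}(x)} \cap \mathbb{C}[G^x_x]$ is dense in $J_{G^x_x, \mathcal{X}(x)}$ for all $x \in \Gu$. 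By the very definition of $\mathcal{D}_\Gamma$ in Definition~\ref{DefnPropD}, this is precisely the statement that $\mathcal{X}(x) \in \mathcal{D}_{G^x_x}$ for every $x$, which yields the desired equivalence.

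For the final ``in particular'' clause, I would argue as follows. The ideal $J_{G^x_x, \mathcal{X}(x)}$ is only defined once one knows that $\mathcal{X}(x) \subseteq \text{Sub}(G^x_x)$ is closed in the Chabauty topology and invariant under conjugation; this is guaranteed by the construction underlying \cite[Theorem~5.5]{Hume}. Granting this, if every $G^x_x$ satisfies the Density Property, then by definition $\mathcal{D}_{G^x_x}$ contains all closed conjugation-invariant sets of subgroups, and in particular contains $\mathcal{X}(x)$. Thus $\mathcal{X}(x) \in \mathcal{D}_{G^x_x}$ for all $x$, and the equivalence just established gives that $J \cap \cg$ is dense in $J$.

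Since every step is an appeal to a previously established result, there is no substantial obstacle in the argument itself. The only point requiring care is the bookkeeping in the final clause: one must confirm that the set $\mathcal{X}(x) = \pi^{-1}(x) \cap \tiGu_\ess$ is indeed a closed conjugation-invariant family of subgroups of $G^x_x$, so that the Density Property is applicable to it. This property of $\mathcal{X}(x)$ is inherited from the Hausdorff cover construction and the results of \cite{Hume}, and once it is invoked the theorem follows immediately.
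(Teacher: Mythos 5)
Your proposal is correct and follows exactly the route the paper takes: the theorem is stated as an immediate consequence of Proposition~\ref{DensChar} together with the identification $J_x = J_{G^x_x,\mathcal{X}(x)}$ from \cite[Definition~5.1 \& Theorem~5.5]{Hume}, with the ``in particular'' clause following directly from Definition~\ref{DefnPropD}. Your extra remark that one must check $\mathcal{X}(x)$ is closed and conjugation-invariant is sound bookkeeping but does not change the argument.
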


\begin{rmk}
    In Theorem~\ref{ReductionToIso} it suffices to check whether $\mathcal{X}(x)\in \mathcal{D}_{G^{x}_{x}}$ for \emph{extremely dangerous points} $x \in \Gu$ (as described in \cite[Proposition~1.12]{NS23}). This is because $J_{\Gamma,\mathcal{X}} = \{0\}$ whenever the trivial group $\{e\}$ belongs to $\mathcal{X}$, and a unit $x \in \Gu$ satisfies $\{x\}\notin\mathcal{X}(x)$ if and only if it is extremely dangerous (see  \cite[Corollary~5.6]{Hume}).
\end{rmk}

It is clear that finite groups satisfy the Density Property, and hence $J \cap \cg$ is dense in $J$ for any amenable second-countable \'etale groupoid with finite isotropy groups.

\begin{question}
    Which discrete amenable groups satisfy the Density Property?
\end{question}

    Assume that there exists a (discrete) countable amenable group $\Gamma$ that fails to have the Density Property, and let $\mathcal{X} \subseteq \text{Sub}(\Gamma)$ be a closed conjugation-invariant set of subgroups for which $J_{\Gamma,\mathcal{X}}\cap\mathbb{C}[\Gamma]$ is not dense in $J_{\Gamma,\mathcal{X}}$ (i.e. $\mathcal{X} \notin \mathcal{D}_\Gamma$). In particular, the trivial subgroup $\{e\}$ does not belong to $\mathcal{X}$. By \cite[Section~6]{Hume}, there exists an amenable and second-countable \'etale groupoid $G$ and a unit element $x_0 \in \Gu$ for which $G^{x_0}_{x_0} = \Gamma$ and $\mathcal{X}(x_0) = \mathcal{X}$. It follows from Theorem \ref{ReductionToIso} that $J \cap \cg$ is not dense in $J$. Therefore, we have established the following.

\begin{cor}
\label{equivalence}
    The intersection $J \cap \cg$ is dense in $J$ for all second-countable amenable \'etale groupoids if and only if all (discrete) countable amenable groups satisfy the Density Property.
\end{cor}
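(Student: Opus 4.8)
The plan is to obtain both implications directly from the characterisation in Theorem \ref{ReductionToIso}: for an amenable second-countable \'etale groupoid $G$, the intersection $J \cap \cg$ is dense in $J$ if and only if $\mathcal{X}(x) \in \mathcal{D}_{G^x_x}$ for every $x \in \Gu$. Each direction of the corollary is then a matter of matching this local condition against the Density Property of Definition \ref{DefnPropD}.

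For the implication from the Density Property to density in all groupoids, I would first note that the isotropy groups of an amenable second-countable \'etale groupoid are themselves countable amenable groups: second-countability forces the discrete source fibres, and hence the isotropy groups $G^x_x$, to be countable, while amenability of $G$ restricts to amenability of each isotropy subgroupoid. If every countable amenable group has the Density Property, then in particular each $G^x_x$ does, so by definition every closed conjugation-invariant family---and in particular $\mathcal{X}(x)$---lies in $\mathcal{D}_{G^x_x}$. Theorem \ref{ReductionToIso} then gives that $J \cap \cg$ is dense in $J$.

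For the converse I would argue by contraposition, formalising the discussion preceding the statement. Suppose a countable amenable group $\Gamma$ fails the Density Property, witnessed by a closed conjugation-invariant family $\mathcal{X} \subseteq \text{Sub}(\Gamma)$ with $J_{\Gamma,\mathcal{X}} \cap \mathbb{C}[\Gamma]$ not dense in $J_{\Gamma,\mathcal{X}}$. The first thing to check is that $\{e\} \notin \mathcal{X}$: were the trivial subgroup in $\mathcal{X}$, then $J_{\Gamma,\mathcal{X}} \subseteq \ker(\lambda_{\Gamma/\{e\}}) = \{0\}$, since the regular representation of the amenable group $\Gamma$ is faithful on $C^*(\Gamma) = C^*_r(\Gamma)$, and density would hold trivially, contradicting $\mathcal{X} \notin \mathcal{D}_\Gamma$. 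With the trivial subgroup excluded, I would invoke the groupoid realisation of \cite[Section~6]{Hume} to produce an amenable second-countable \'etale groupoid $G$ and a unit $x_0 \in \Gu$ with $G^{x_0}_{x_0} = \Gamma$ and $\mathcal{X}(x_0) = \mathcal{X}$. Since $\mathcal{X}(x_0) = \mathcal{X} \notin \mathcal{D}_{G^{x_0}_{x_0}}$, Theorem \ref{ReductionToIso} shows that $J \cap \cg$ is not dense in $J$ for this $G$, as required.

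The routine half is the first implication, where the only point needing care is that amenability and second-countability pass to the isotropy groups. The real content of the converse sits in the realisation construction of \cite{Hume}, which manufactures a single amenable second-countable groupoid whose local data at $x_0$ reproduces the prescribed pair $(\Gamma, \mathcal{X})$; since this is cited, the only step I must carry out by hand is the exclusion of $\{e\}$ from $\mathcal{X}$, without which the witnessing family would detect no genuine failure of density.
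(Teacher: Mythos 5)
Your proposal is correct and follows the paper's own argument essentially verbatim: the forward direction is the "in particular" clause of Theorem \ref{ReductionToIso} applied to the (countable, amenable) isotropy groups, and the converse is the contrapositive using the exclusion of $\{e\}$ from $\mathcal{X}$ together with the realisation construction of \cite[Section~6]{Hume}. No discrepancies to report.
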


\section{Groups with the Density Property}\label{sec:GrpPropD}
In this section, we study the set $\mathcal{D}_\Gamma$ for amenable discrete groups $\Gamma$, and show that certain classes of groups satisfy the Density Property (see Definition~\ref{DefnPropD}). We do this in three parts. In Subsection~\ref{subsec:reduction} we simplify the conditions one needs to check for finding elements of the set $\mathcal{D}_\Gamma$. In Subsection~\ref{subset:permanence} we prove some permanence results for the Density Property. We then use these observations in Subsection~\ref{subsec:applications} to prove Theorem~\ref{thm:bigthm}, which in particular states that all countable finite-by-nilpotent groups satisfy the Density Property. Combining with Theorem \ref{ReductionToIso}, we are able to deduce that $J \cap \cg$ is dense in the singular ideal $J$ for certain classes of groupoids.

Throughout the section we work with the reduced group $C^*$-algebra $C^*_r(\Gamma)$ since this is canonically isomorphic to the full $C^*$-algebra. The set of subgroups $\text{Sub}(\Gamma)$ will always be equipped with the Chabauty topology, and given a closed and conjugation-invariant subset $\mathcal{X} \subseteq \text{Sub}(\Gamma)$ we write $J_{\Gamma, \mathcal{X}} \coloneq \bigcap_{X \in \mathcal{X}} \ker (\lambda_{\Gamma/X}) \trianglelefteq C^*_r(\Gamma)$ (see Section \ref{sub:CharDens}). 

\vspace{0.3cm}
Let us record three preliminary lemmas which will be useful throughout the subsections. For a subgroup $\Lambda \subseteq \Gamma$, we write $I_\Lambda \colon C^*_r(\Lambda) \to C^*_r(\Gamma)$ for the canonical inclusion and $E_\Lambda \colon C^*_r(\Gamma) \to C^*_r(\Lambda)$ for the canonical conditional expectation.  By \cite[Proposition~7.4]{Hume}, $\Lambda\cap\mathcal{X} \coloneq \{\Lambda\cap X \colon X\in\mathcal{X}\}$ is a closed and conjugation-invariant set of subgroups of $\Lambda$. In the first lemma we isolate the following observations from \cite[Propositions~7.4 \& 7.7]{Hume} which describe the images of $J_{\Lambda, \Lambda \cap \mathcal{X}}$ and $J_{\Gamma, \mathcal{X}}$ under the inclusion and conditional expectation maps.

\begin{lem}\label{Inclusions}
    Let $\Gamma$ be an amenable discrete group and $\mathcal{X} \subseteq \mathrm{Sub}(\Gamma)$ be a closed and conjugation-invariant set of subgroups.
    \begin{enumerate}[label=(\roman*)]
        \item For any subgroup $\Lambda \subseteq \Gamma$ we have
        \[
        I_{\Lambda} \big(J_{\Lambda, \Lambda \cap \mathcal{X}}\big) = J_{\Gamma, \mathcal{X}}\cap I_{\Lambda}\big(C^{*}_{r}(\Lambda)\big) \quad
        \text{and}
        \quad I_{\Lambda} \big(J_{\Lambda, \Lambda \cap \mathcal{X}} \cap \mathbb{C}[\Lambda] \big) = J_{\Gamma,\mathcal{X}}\cap\mathbb{C}[\Lambda]
        \]
        \item Let $N \subseteq \Gamma$ be the (normal) subgroup generated by all $X\in\mathcal{X}$. Then,
        \[
        E_N \big(J_{\Gamma, \mathcal{X}} \big) = J_{N, \mathcal{X}}\quad 
        \text{and}
        \quad E_{N}\big(J_{\Gamma,\mathcal{X}}\cap\mathbb{C}[\Gamma]\big) = J_{N,\mathcal{X}}\cap\mathbb{C}[N].
        \] 
    \end{enumerate}
\end{lem}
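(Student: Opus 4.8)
The plan is to reduce both statements to a single fact about how the quasi-regular representations $\lambda_{\Gamma/X}$ interact with the inclusion $I_{\Lambda}$ and the conditional expectation $E_{N}$, namely the Mackey-type orbit decomposition of a restricted quasi-regular representation. Restricting $\lambda_{\Gamma/X}$ to a subgroup $\Lambda$ decomposes the $\Lambda$-set $\Gamma/X$ into orbits indexed by the double cosets $\Lambda\backslash\Gamma/X$, the orbit through $gX$ carrying stabiliser $\Lambda\cap gXg^{-1}$, so that
\[
\lambda_{\Gamma/X}\circ I_{\Lambda}\;\cong\;\bigoplus_{\Lambda gX\in\Lambda\backslash\Gamma/X}\lambda_{\Lambda/(\Lambda\cap gXg^{-1})}
\]
as representations of $C^{*}_{r}(\Lambda)$. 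Since the kernel of a (possibly infinite) direct sum is the intersection of the kernels, this reduces everything to bookkeeping with subgroup families.

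For part (i), I would test membership element by element. For $a\in C^{*}_{r}(\Lambda)$, the element $I_{\Lambda}(a)$ lies in $J_{\Gamma,\mathcal{X}}$ iff $\lambda_{\Gamma/X}(I_{\Lambda}(a))=0$ for every $X\in\mathcal{X}$, which by the decomposition is equivalent to $\lambda_{\Lambda/(\Lambda\cap gXg^{-1})}(a)=0$ for all $X\in\mathcal{X}$ and all $g\in\Gamma$. Because $\mathcal{X}$ is conjugation-invariant, the family $\{\Lambda\cap gXg^{-1}:X\in\mathcal{X},\,g\in\Gamma\}$ is exactly $\Lambda\cap\mathcal{X}$, so this says precisely $a\in J_{\Lambda,\Lambda\cap\mathcal{X}}$. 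Hence $I_{\Lambda}^{-1}(J_{\Gamma,\mathcal{X}})=J_{\Lambda,\Lambda\cap\mathcal{X}}$, and applying the injective map $I_{\Lambda}$ yields the first equality. The algebraic equality follows by intersecting with $\mathbb{C}[\Lambda]$, using that $I_{\Lambda}$ identifies $\mathbb{C}[\Lambda]$ with the functions in $\mathbb{C}[\Gamma]$ supported in $\Lambda$.

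For part (ii), first note that since each $X\in\mathcal{X}$ is contained in $N$ we have $N\cap\mathcal{X}=\mathcal{X}$, so $J_{N,\mathcal{X}}=J_{N,N\cap\mathcal{X}}$. The inclusion $J_{N,\mathcal{X}}\subseteq E_{N}(J_{\Gamma,\mathcal{X}})$ is formal: part (i) with $\Lambda=N$ gives $I_{N}(J_{N,\mathcal{X}})\subseteq J_{\Gamma,\mathcal{X}}$, and since $E_{N}\circ I_{N}=\mathrm{id}$ every $b\in J_{N,\mathcal{X}}$ equals $E_{N}(I_{N}(b))$ with $I_{N}(b)\in J_{\Gamma,\mathcal{X}}$. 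For the reverse inclusion I would establish the compression identity
\[
P_{X}\,\lambda_{\Gamma/X}(a)\,P_{X}\;=\;\lambda_{N/X}(E_{N}(a)),\qquad a\in C^{*}_{r}(\Gamma),
\]
where $P_{X}$ is the orthogonal projection of $\ell^{2}(\Gamma/X)$ onto the $N$-invariant subspace $\ell^{2}(N/X)$, on which $N$ acts exactly by $\lambda_{N/X}$. This is checked on the basis elements $\delta_{g}$: since $X\subseteq N$, a coefficient $\langle\delta_{mX},\lambda_{\Gamma/X}(\delta_{g})\delta_{nX}\rangle$ with $m,n\in N$ vanishes unless $m^{-1}gn\in X$, which forces $g\in N$—exactly the support condition defining $E_{N}$—and continuity extends the identity to all of $C^{*}_{r}(\Gamma)$. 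Given the identity, $a\in J_{\Gamma,\mathcal{X}}$ forces $\lambda_{N/X}(E_{N}(a))=0$ for every $X\in\mathcal{X}$, i.e. $E_{N}(a)\in J_{N,\mathcal{X}}$. The algebraic equality follows by the same two arguments, using that $E_{N}$ carries $\mathbb{C}[\Gamma]$ onto $\mathbb{C}[N]$ and that $I_{N}$ restricts to the inclusion $\mathbb{C}[N]\hookrightarrow\mathbb{C}[\Gamma]$.

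The genuinely load-bearing steps are the two representation-theoretic identities; the ideal-theoretic steps are then routine manipulations with $E_{N}\circ I_{N}=\mathrm{id}$ and conjugation-invariance. The orbit decomposition in part (i) is standard, the only subtlety being the infinite direct sum, which is harmless. The main obstacle I anticipate is verifying the compression identity in part (ii) cleanly; once it is in hand, the forward inclusion $E_{N}(J_{\Gamma,\mathcal{X}})\subseteq J_{N,\mathcal{X}}$ is immediate. I do not expect norm or convergence difficulties, since both $P_{X}\lambda_{\Gamma/X}(\cdot)P_{X}$ and $\lambda_{N/X}\circ E_{N}$ are continuous and agree on the dense subalgebra $\mathbb{C}[\Gamma]$.
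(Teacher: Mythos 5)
Your proof is correct. The paper itself does not reprove this lemma but simply cites \cite{Hume} (Propositions~7.4 and~7.7), so there is no in-paper argument to compare against; your two key identities --- the Mackey orbit decomposition $\lambda_{\Gamma/X}\circ I_{\Lambda}\cong\bigoplus_{\Lambda gX}\lambda_{\Lambda/(\Lambda\cap gXg^{-1})}$ combined with conjugation-invariance of $\mathcal{X}$ for part (i), and the compression identity $P_{X}\lambda_{\Gamma/X}(a)P_{X}=\lambda_{N/X}(E_{N}(a))$ for part (ii) --- are exactly the standard route and supply a self-contained substitute for the citation. All the supporting steps check out: amenability guarantees the quasi-regular representations are defined on $C^{*}_{r}(\Gamma)=C^{*}(\Gamma)$, injectivity of $I_{\Lambda}$ converts $I_{\Lambda}^{-1}(J_{\Gamma,\mathcal{X}})=J_{\Lambda,\Lambda\cap\mathcal{X}}$ into the stated equality of images, and the easy inclusion in (ii) correctly reduces to (i) with $\Lambda=N$ via $E_{N}\circ I_{N}=\mathrm{id}$.
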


\begin{proof}
    For a proof of (i) see \cite[Proposition~7.4]{Hume}. For a proof of (ii), see the proof of \cite[Proposition~7.7]{Hume} (where $\Phi$ denotes $E_{N}$).
\end{proof}

The next two lemmas concern ideals and dense $^*$-subalgebras inside general $C^*$-algebras.

\begin{lem}\label{ApproxUnit}
    Let A be a $C^*$-algebra, $\mathcal{A} \subseteq A$ a dense $^*$-subalgebra, and $I \trianglelefteq A$ an ideal. Then $I \cap \mathcal{A}$ is dense in $I$ if and only if $I \cap \mathcal{A}$ contains a bounded net $(u_{\beta})$ such that $\lim_{\beta}\|a - au_{\beta}\| = 0$ for all $a\in I$. Moreover, the $u_{\beta}$ can be chosen to be positive contractions.
\end{lem}

We will need the following consequence of this lemma: For any closed and conjugation-invariant set $\mathcal{X} \subseteq \text{Sub}(\Gamma)$, we have $\mathcal{X} \in \mathcal{D}_\Gamma$ if and only if $J_{\Gamma, \mathcal{X}} \cap \mathbb{C}[\Gamma]$ contains a net of positive contractions converging strongly to the identity in $J_{\Gamma, \mathcal{X}}$.

\begin{proof}
    Assume that $I \cap \mathcal{A}$ is dense in $I$. Since $I$ is a $C^*$-algebra, it contains an approximate unit $(u_\beta)$. Then $I \cap \mathcal{A}$ is a dense $^*$-subalgebra, so each $u_\beta$ can be approximated by a sequence of positive elements in the unit ball of $I\cap \mathcal{A}$. By a diagonalisation argument, $I\cap\mathcal{A}$ contains a net of positive contractions in $I \cap \mathcal{A}$ converging strongly to the identity in $I$. Conversely, assume that $I \cap \mathcal{A}$ contains a net of elements of norm at most $M>0$ converging strongly to the identity in  $I$. Let $a \in I$ and $\varepsilon >0$. There exist $u \in I \cap \mathcal{A}$ and $b \in \mathcal{A}$ such that $\norm{u} \le M$, $\norm{a-au} < \frac{\varepsilon}{2}$ and $\norm{a-b} < \frac{\varepsilon}{2M}$. Then $bu \in I \cap \mathcal{A}$ and
    \[
    \norm{a-bu} \le \norm{a-au} + \norm{(a-b)u } < \varepsilon.
    \]
\end{proof}

\begin{lem}
    \label{extensions}
    Let $A$ be a $C^{*}$-algebra, $\mathcal{A}\subseteq A$ a $^*$-subalgebra and $I\trianglelefteq A$ an ideal. Let $Q \colon A\to A/I$ denote the quotient map. If $Q(\mathcal{A})$ is dense in $Q(A)$ and $I\cap\mathcal{A}$ is dense in $I$, then $\mathcal{A}$ is dense in $A$.  
\end{lem}

\begin{proof}
   Let $a\in A$ and $\varepsilon > 0$. Since $Q(\mathcal{A})$ is dense in $Q(A)$, we may choose $b\in \mathcal{A}$ such that $\|Q(a)-Q(b)\| < \frac{\varepsilon}{2}$. By definition of the quotient norm, there is $\tilde{c}\in I$ such that $\|a - b - \tilde{c}\| < \frac{\varepsilon}{2}$. Since $I\cap\mathcal{A}$ is dense in $I$, there is $c\in I\cap\mathcal{A}$ with $\|c - \tilde{c}\| < \frac{\varepsilon}{2}$. Hence, $d \coloneq b+c\in \mathcal{A}$ satisfies $\|a - d\| <  \varepsilon$.
\end{proof}

\PRLsep

\subsection{Reduction Properties}\label{subsec:reduction} In this subsection we show that in order to determine whether $J_{\Gamma, \mathcal{X}}$ belongs to $\mathcal{D}_\Gamma$, it suffices to assume that $\Gamma$ is generated by the subgroups $X \in \mathcal{X}$ (Theorem~\ref{thm:reducetonsg}), and that $\mathcal{X}$ is minimal in a certain sense (Theorem~\ref{thm:reduceX}).

\vspace{0.3cm}
The following lemma is well-known to experts in group theory -- it allows us to approximate an arbitrary element $a\in C^{*}_{r}(\Gamma)$ by its restriction to finitely many cosets of a normal subgroup $N$. If $\Gamma$ is an amenable discrete group and $N$ a normal subgroup, define $a_{gN} \coloneq \delta_{g}\cdot E_{N}(\delta_{g^{-1}}\cdot a)\in C^{*}_{r}(\Gamma)$ for $a\in C^{*}_{r}(\Gamma)$ and $gN\in\Gamma/N$. Note that this is independent of the choice of representative $g$. Whenever $a_{gN} = 0$ for all but finitely many cosets $gN$, the (finite) sum $\sum_{gN}a_{gN} = a$.

\begin{lem}
\label{lem:Folner}
    Let $\Gamma$ be an amenable discrete group and $N$ a normal subgroup. For any $a\in C^{*}_{r}(\Gamma)$, there is a net $(a_{i})$ of finite linear combinations of $a_{gN}$ (for $gN\in \Gamma/N$) that converges to $a$.
\end{lem}
\begin{proof}
    By amenability of $\Gamma/N$ and \cite[Theorem~2.6.8]{BO08}, there is a net $(\tilde{\varphi}_{i})$ of finitely supported positive definite functions $\Gamma/N\to\mathbb{C}$ converging pointwise to $1$. Let $q_N \colon \Gamma \to \Gamma/N$ denote the quotient map, and define the functions $\varphi_{i} \coloneq \tilde{\varphi}_{i}\circ q_N$. The $\varphi_i$ are positive definite, and converge pointwise to $1$ on $\Gamma$. Without loss of generality we may assume $\varphi_{i}(e) = 1$ for all $i$. Then, by \cite[Theorem~2.5.11(4)]{BO08}, the multiplier maps $m_{i} \colon C^{*}_{r}(\Gamma)\to C^{*}_{r}(\Gamma)$ defined for $a = \sum_{g\in\Gamma}a_{g}\delta_{g}\in\mathbb{C}[\Gamma]$ as $m_{i}(a) = \sum_{g\in\Gamma} \varphi_{i}(g) a_g \delta_{g}$ are unital and completely positive, and are therefore contractions. 
    
    For $a \in C^*_r(\Gamma)$, observe that $\big(m_i(a)\big)_{gN} = \tilde{\varphi}_{i}(gN)a_{gN}$ for each $gN\in\Gamma/N$, and hence $m_i(a)$ is a finite linear combination of $a_{gN}$. Since $(\varphi_{i})$ converges pointwise to $1$, $m_i(a)$ converges to $a$ whenever $a\in\mathbb{C}[\Gamma]$. Since the $m_{i}$ are contractions, $m_i(a)$ converges to $a$ for all $a\in C^{*}_{r}(\Gamma)$. Defining $a_i \coloneq m_i(a)$, the net $(a_{i})$ satisfies the conclusion of the lemma.
\end{proof}

We now prove our first reduction property.

\begin{thm}
\label{thm:reducetonsg}
    Let $\Gamma$ be an amenable discrete group and $\mathcal{X} \subseteq \mathrm{Sub}(\Gamma)$ a closed and conjugation-invariant set of subgroups. Let $\Lambda \subseteq \Gamma$ be a subgroup such that $X \subseteq \Lambda$ for all $X \in \mathcal{X}$. Then, $\mathcal{X}\in\mathcal{D}_{\Gamma}$ if and only if $\mathcal{X}\in \mathcal{D}_{\Lambda}$.
\end{thm}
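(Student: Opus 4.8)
The plan is to reduce the statement to the case of the normal subgroup generated by $\mathcal{X}$, and then prove the equivalence there using the conditional expectation and the inclusion from Lemma \ref{Inclusions}. Let $N \coloneq \langle\, \bigcup_{X\in\mathcal{X}} X\,\rangle$ be the subgroup of $\Gamma$ generated by $\mathcal{X}$. Since $\mathcal{X}$ is conjugation-invariant, $N$ is normal in $\Gamma$, and the hypothesis $X\subseteq\Lambda$ gives $N\subseteq\Lambda$, so $N$ is also normal in $\Lambda$. Each $X\in\mathcal{X}$ lies in $N$, so $\mathcal{X}$ may be regarded as a closed, conjugation-invariant set of subgroups of $N$ (closedness passes to the subspace $\mathrm{Sub}(N)\subseteq\mathrm{Sub}(\Gamma)$, and conjugation-invariance under the smaller group is automatic), and $N$ is exactly the subgroup generated by $\mathcal{X}$ inside each of $\Gamma$, $\Lambda$ and $N$. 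I would prove the auxiliary claim that for any amenable group $G\in\{\Gamma,\Lambda\}$ one has $\mathcal{X}\in\mathcal{D}_G$ if and only if $\mathcal{X}\in\mathcal{D}_N$; chaining the two instances then yields the theorem.

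For the direction $\mathcal{X}\in\mathcal{D}_G \Rightarrow \mathcal{X}\in\mathcal{D}_N$ I would apply the conditional expectation $E_N\colon C^*_r(G)\to C^*_r(N)$. By Lemma \ref{Inclusions}~(ii) (with $N$ the subgroup generated by $\mathcal{X}$) we have $E_N(J_{G,\mathcal{X}})=J_{N,\mathcal{X}}$ and $E_N(J_{G,\mathcal{X}}\cap\mathbb{C}[G])=J_{N,\mathcal{X}}\cap\mathbb{C}[N]$. Since $E_N$ is norm-contractive and $J_{G,\mathcal{X}}\cap\mathbb{C}[G]$ is dense in $J_{G,\mathcal{X}}$ by assumption, applying $E_N$ gives $J_{N,\mathcal{X}}=E_N(J_{G,\mathcal{X}})\subseteq\overline{E_N(J_{G,\mathcal{X}}\cap\mathbb{C}[G])}=\overline{J_{N,\mathcal{X}}\cap\mathbb{C}[N]}$, i.e. $\mathcal{X}\in\mathcal{D}_N$.

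For the converse $\mathcal{X}\in\mathcal{D}_N \Rightarrow \mathcal{X}\in\mathcal{D}_G$ I would first identify $J_{G,\mathcal{X}}$ with the closed ideal of $C^*_r(G)$ generated by $I_N(J_{N,\mathcal{X}})$. Writing $M$ for the closed linear span of $\{\,j\,\delta_g : j\in J_{N,\mathcal{X}},\ g\in G\,\}$, the $N$-invariance of $J_{N,\mathcal{X}}$ (a consequence of conjugation-invariance of $\mathcal{X}$), together with the fact that $C^*_r(G)$ is the closed span of $\{c\,\delta_g : c\in C^*_r(N),\ g\in G\}$, shows that $M$ is precisely this generated ideal. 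Lemma \ref{Inclusions}~(i) gives $I_N(J_{N,\mathcal{X}})\subseteq J_{G,\mathcal{X}}$, hence $M\subseteq J_{G,\mathcal{X}}$. For the reverse inclusion I would use amenability: for $b\in J_{G,\mathcal{X}}$ the coefficient $E_N(\delta_g^{-1}b)$ of the coset component $\delta_g\,E_N(\delta_g^{-1}b)$ lies in $E_N(J_{G,\mathcal{X}})=J_{N,\mathcal{X}}$ (Lemma \ref{Inclusions}~(ii), since $\delta_g^{-1}b\in J_{G,\mathcal{X}}$), so each such component lies in $M$; and since $G/N$ is amenable, the Fejér–Følner multipliers associated to a Følner sequence in $G/N$ produce finite sums of these components converging to $b$ in norm, whence $b\in M$ and $M=J_{G,\mathcal{X}}$. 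Finally, given $b\in J_{G,\mathcal{X}}=M$ and $\varepsilon>0$, I would approximate $b$ within $\varepsilon/2$ by a finite sum $\sum_i j_i\delta_{g_i}$ with $j_i\in J_{N,\mathcal{X}}$, then use $\mathcal{X}\in\mathcal{D}_N$ to replace each $j_i$ by some $j_i'\in J_{N,\mathcal{X}}\cap\mathbb{C}[N]$ close enough that $\sum_i j_i'\delta_{g_i}$ is within $\varepsilon$ of $b$; since $\sum_i j_i'\delta_{g_i}\in J_{G,\mathcal{X}}\cap\mathbb{C}[G]$, this proves $\mathcal{X}\in\mathcal{D}_G$.

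The main obstacle is the reverse inclusion $J_{G,\mathcal{X}}\subseteq M$: an ideal need not be generated by its intersection with a subalgebra, and recovering all of $J_{G,\mathcal{X}}$ from its $N$-fibre $J_{N,\mathcal{X}}$ is exactly where amenability of $G/N$ is indispensable (conceptually, this is the identification of $J_{G,\mathcal{X}}$ with a reduced crossed product $J_{N,\mathcal{X}}\rtimes_r G/N$ sitting inside $C^*_r(G)=C^*_r(N)\rtimes_r G/N$). The remaining steps are routine applications of Lemma \ref{Inclusions} and of the density/approximate-unit arguments in the spirit of Lemma \ref{ApproxUnit}.
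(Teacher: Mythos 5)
Your proposal is correct and follows essentially the same route as the paper: reduce to the normal subgroup $N$ generated by $\mathcal{X}$, use the conditional expectation $E_N$ together with Lemma \ref{Inclusions} for the forward direction, and use the F\o lner-multiplier decomposition of an element of $J_{\Gamma,\mathcal{X}}$ into its coset components $\delta_g E_N(\delta_{g^{-1}}b)$ (the paper's Lemma \ref{lem:Folner}) for the converse. The only difference is organizational: you package the converse as the identification of $J_{\Gamma,\mathcal{X}}$ with the closed ideal generated by $I_N(J_{N,\mathcal{X}})$ followed by a two-step approximation, whereas the paper runs the same F\o lner averaging against an approximate unit of $J_{N,\mathcal{X}}\cap\mathbb{C}[N]$ supplied by Lemma \ref{ApproxUnit}.
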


\begin{proof}
    Let $N \subseteq \Gamma$ be the (normal) subgroup generated by all $X\in\mathcal{X}$. It suffices to prove the theorem in the case $\Lambda = N$, since $N$ is independent of the ambient group.
    
    If $J_{\Gamma,\mathcal{X}}\cap\mathbb{C}[\Gamma]$ is dense in $J_{\Gamma,\mathcal{X}}$, then continuity of $E_{N}$ and Lemma \ref{Inclusions} (ii) imply that $J_{N,\mathcal{X}}\cap\mathbb{C}[N] = E_{N}(J_{\Gamma,\mathcal{X}}\cap\mathbb{C}[\Gamma])$ is dense in $J_{N,\mathcal{X}} = E_{N}(J_{\Gamma,\mathcal{X}})$. This proves the ``only if'' direction.

    Conversely, assume that $J_{N, \mathcal{X}}\cap\mathbb{C}[N]$ is dense in $J_{N,\mathcal{X}}$. By Lemma \ref{ApproxUnit} there exists a net of contractions $(u_{\beta})\subseteq J_{N,\mathcal{X}}\cap\mathbb{C}[N]$ converging strongly to the identity in $J_{N,\mathcal{X}}$. Since $J_{\Gamma,\mathcal{X}}\cap I_N\big( C^{*}_{r}(N) \big) = I_N \big(J_{N,\mathcal{X}} \big)$ by Lemma~\ref{Inclusions} (i), we can regard $(u_{\beta})$ as a net in $J_{\Gamma,\mathcal{X}}\cap\mathbb{C}[\Gamma]$.
    
    Let $a\in J_{\Gamma,\mathcal{X}}$, and let $(a_{i})$ be as in Lemma \ref{lem:Folner}. By Lemma \ref{Inclusions}, $a_{gN}\in J_{\Gamma,\mathcal{X}}$ for all $gN\in \Gamma/N$. Since $a_{i}$ is a finite linear combination of $a_{gN}$, it follows that $a_{i}\in J_{\Gamma,\mathcal{X}}$ for all $i$. Moreover, since $\delta_{g^{-1}} \cdot a_{gN}\in I_N\big(J_{N,\mathcal{X}}\big)$, we have $\lim_{\beta}\|a_{gN} \cdot u_{\beta} - a_{gN}\| = 0$ for all $gN\in\Gamma/N$ and therefore $\lim_{\beta}\|a_{i} \cdot u_{\beta} - a_{i}\| = 0$ for all $i$. Since $(a_{i})$ converges to $a$ and $\|u_{\beta}\|\leq 1$ for all $\beta$, it follows from the above that $\lim_{\beta}\|a \cdot u_{\beta} - a\| = 0$. Lemma \ref{ApproxUnit} then implies that $J_{\Gamma, \mathcal{X}}\cap\mathbb{C}[\Gamma]$ is dense in $J_{\Gamma,\mathcal{X}}$, completing the proof.
\end{proof}

The following corollary will be used in the proofs of Theorems \ref{thm:quotients} and \ref{thm:bigthm}.

\begin{cor}
\label{cor:nsg}
    If $\Gamma$ is an amenable discrete group and $N$ a normal subgroup, then $\{N\}\in\mathcal{D}_{\Gamma}$.
\end{cor}
\begin{proof}
By Theorem \ref{thm:reducetonsg}, we can assume that $N = \Gamma$. Note that the ideal $J_{\Gamma, \mathcal{X}} = J_{\Gamma,\{\Gamma\}}$ is equal to the kernel of the trivial representation $\epsilon \colon C^{*}_{r}(\Gamma)\to \mathbb{C}$. Let $(F_{i})$ be a F{\o}lner net for $\Gamma$ and set $v_{i} = \frac{1}{|F_{i}|}\sum_{g\in F_{i}}\delta_{g}$. For any $g\in \Gamma$, we have $\lim_{i}\|\delta_{g}\cdot v_{i} - v_{i}\| = 0$, hence (by linearity and density) $\lim_{i}\|a \cdot v_{i} - \epsilon(a)v_{i}\| = 0$ for all $a\in C^{*}_{r}(\Gamma)$. By Kesten's criteria (\cite[Theorem~2.6.8(8)]{BO08}) we have $\|v_{i}\| = 1$ for all $i$, so the above limit implies $\lim_{i}\|a \cdot v_{i}\| = |\epsilon(a)|$ for all $a\in C^{*}_{r}(\Gamma)$. Set $u_{i} = \delta_{e} - v_{i}$ for all $i$. Then $(u_i)$ is a net of elements in $J_{\Gamma, \{\Gamma\}} \cap \mathbb{C}[\Gamma]$ of norm at most 2 satisfying $\lim_{i}\|a \cdot u_{i} - a\| = \lim_{i} \|a \cdot v_{i}\| = |\epsilon(a)| = 0$ for any $a\in J_{\Gamma, \{\Gamma\}}$. It follows from Lemma \ref{ApproxUnit} that $J_{\Gamma, \{\Gamma\}}\cap\mathbb{C}[\Gamma]$ is dense in $J_{\Gamma,\{\Gamma\}}$, as desired.
\end{proof}

In order to determine whether $\mathcal{X}$ belongs to $\mathcal{D}_\Gamma$, Theorem \ref{thm:reducetonsg} allows us to reduce the size of the ambient group $\Gamma$. The next result allows us to reduce the size of $\mathcal{X}$, and assume that it is minimal in a certain sense.

For any discrete group $\Gamma$ and closed conjugation-invariant set $\mathcal{X} \subseteq \text{Sub}(\Gamma)$, let $\mathcal{X}_{\text{min}} \subseteq \mathcal{X}$ be the collection of subgroups $X\in\mathcal{X}$ for which $X'\subseteq X$ and $X'\in\mathcal{X}$ implies $X = X'$. Then $\mathcal{X}_{\text{min}}$ is conjugation-invariant but not necessarily closed.

\begin{thm}
\label{thm:reduceX}
    Let $\Gamma$ be a discrete group and $\mathcal{X} \subseteq \mathrm{Sub}(\Gamma)$ a closed and conjugation-invariant set of subgroups. For every $X\in \mathcal{X}$, there is $Y\in\mathcal{X}_{\mathrm{min}}$ such that $Y\subseteq X$. Consequently, for $\Gamma$ amenable, we have $J_{\Gamma,\mathcal{X}} = J_{\Gamma,\overline{\mathcal{X}}_\mathrm{min}}$, so $\mathcal{X}\in\mathcal{D}_{\Gamma}$ if and only if $\overline{\mathcal{X}}_{\mathrm{min}}\in \mathcal{D}_{\Gamma}$.
\end{thm}
\begin{proof}
     Fix $X\in \mathcal{X}$. Let $\mathcal{Z}_{X} = \{Y\in \mathcal{X} \colon Y\subseteq X\}$ and (partially) order $\mathcal{Z}_{X}$ by inclusion. Let $\mathcal{C}\subseteq \mathcal{Z}_{X}$ be a chain in $\mathcal{Z}_{X}$, and view $\mathcal{C}$ as a net ordered by inclusion. Define $Y_* \coloneq \bigcap_{Y \in \mathcal{C}} Y$. The net $\mathcal{C}$ converges to $Y_*$ by definition of the Chabauty topology, so $Y_*$ belongs to $\mathcal{X}$ and hence to $\mathcal{Z}_{X}$. Therefore, every chain in $\mathcal{Z}_{X}$ has a lower bound. Zorn's lemma implies that $\mathcal{Z}_{X}$ contains a minimal element $Y\in\mathcal{\mathcal{X}}$.
     This proves the first part of the theorem.

     Now assume that $\Gamma$ is amenable. If $X_{1}\subseteq X_{2}$ are subgroups of $\Gamma$, then $\ker(\lambda_{\Gamma/X_{1}})\subseteq \ker(\lambda_{\Gamma/X_{2}})$ (see \cite[Appendix~E~and~F]{BHV07}). Therefore, $J_{\Gamma,\mathcal{X}} =\bigcap_{X\in\mathcal{X}}\ker(\lambda_{\Gamma/X}) = \bigcap_{X\in \mathcal{X}_{\text{min}}}\ker(\lambda_{\Gamma/X})$, and by \cite[Proposition~3.3]{BK20} we have $\bigcap_{X\in \mathcal{X}_{\text{min}}}\ker(\lambda_{\Gamma/X}) = \bigcap_{X\in \overline{\mathcal{X}}_{\text{min}}}\ker(\lambda_{\Gamma/X})$. This completes the proof.
\end{proof}

\PRLsep

\subsection{Permanence Properties}\label{subset:permanence} In this subsection we show that the set $\mathcal{D}_\Gamma$ is preserved under quotients (Theorem \ref{thm:quotients}) and increasing unions (Proposition \ref{prp:induc}) of the ambient group. As a consequence, the Density Property is preserved under taking quotients and inductive limits. We also show that $\mathcal{D}_\Gamma$ contains any closed collection of \emph{normal} subgroups (Corollary~\ref{cor:arbnsg}).

Firstly, we prove that $\mathcal{D}_{\Gamma}$ is closed under finite unions.
\begin{prp}
\label{prp:finiteunion}
    Let $\Gamma$ be an amenable discrete group and suppose $\mathcal{X}_{1}, \dots, \mathcal{X}_n \subseteq \mathrm{Sub}(\Gamma)$ are closed and conjugation-invariant sets of subgroups belonging to $\mathcal{D}_{\Gamma}$. Then the union $\mathcal{X} \coloneq \bigcup_{i =1}^ n \mathcal{X}_{i}$ also belongs to $\mathcal{D}_{\Gamma}$.
\end{prp}

\begin{proof}
    By definition, we have $J_{\Gamma,\mathcal{X}} = \bigcap^{n}_{i=1}J_{\Gamma, \mathcal{X}_{i}}$. For each $i = 1, \dots, n$ there exists, by Lemma \ref{ApproxUnit}, a net $(u_{i,\beta})$ of positive contractions in $J_{\Gamma,\mathcal{X}_{i}}\cap\mathbb{C}[\Gamma]$ converging strongly to the identity in $J_{\Gamma, \mathcal{X}_{i}}$, and hence in $J_{\Gamma,\mathcal{X}} \subseteq J_{\Gamma, \mathcal{X}_{i}}$. Therefore, by a diagonalisation argument, the net $(u_{\beta_{1},...,\beta_{n}}) \coloneq (u_{1,\beta_{1}}\cdot ...\cdot u_{n,\beta_{n}})$ (equipped with the lexicographic partial ordering on $(\beta_{1},...,\beta_{n})$) has a subnet $(u_{\gamma})$ converging strongly to the identity in $J_{\Gamma,\mathcal{X}}$. The net $(u_\gamma)$ lies in $J_{\Gamma,\mathcal{X}} \cap \mathbb{C}[\Gamma]$, so $\mathcal{X}\in\mathcal{D}_{\Gamma}$ by Lemma \ref{ApproxUnit}.
\end{proof}

For an amenable discrete group $\Gamma$ and a normal subgroup $N$, let $Q_{N}:C^{*}_{r}(\Gamma)\to C^{*}_{r}(\Gamma/N)$ be the $^*$-homomorphism induced by the quotient map $q_{N}:\Gamma\to \Gamma/N$. The map 
\begin{align*}
    q^{*}_{N} \colon \text{Sub}(\Gamma/N) &\rightarrow \text{Sub}(\Gamma);
    \text{    } Y \mapsto q_{N}^{-1}(Y),
\end{align*}
is a continuous embedding satisfying $\text{Conj}_{g}\circ q^{*}_{N} = q^{*}_{N}\circ\text{Conj}_{q_{N}(g)}$ for all $g\in\Gamma$, where $\text{Conj}_{g}$ denotes conjugation by $g$. Therefore, if $\mathcal{Y}$ is a closed and conjugation-invariant set of subgroups of $\Gamma/N$, then $q^{*}_{N}(\mathcal{Y})$ is a closed and conjugation-invariant set of subgroups of $\Gamma$.

\begin{thm}
\label{thm:quotients}
    Let $\Gamma$ be an amenable discrete group and $N$ a normal subgroup. If $\mathcal{Y}\subseteq \mathrm{Sub}(\Gamma/N)$ is a closed and conjugation-invariant set of subgroups of $\Gamma/N$, then 
    \begin{equation}
    \label{LiftIdeal}
         J_{\Gamma, q_{N}^{*}(\mathcal{Y})} = Q_{N}^{-1}(J_{\Gamma/N,\mathcal{Y}}) \quad \text{and} \quad Q_{N}\big(J_{\Gamma, q_{N}^{*}(\mathcal{Y})} \cap \mathbb{C}[\Gamma] \big) = J_{\Gamma/N,\mathcal{Y}} \cap \mathbb{C}[\Gamma/N].
    \end{equation}
Moreover, $q^{*}_{N}(\mathcal{Y})\in\mathcal{D}_{\Gamma}$ if and only if $\mathcal{Y}\in\mathcal{D}_{\Gamma/N}$. Consequently, if $\Gamma$ satisfies the Density Property, then so does $\Gamma/N$.
\end{thm}

\begin{proof}
    For $Y \in \mathcal{Y}$, it is easy to see that $\lambda_{\Gamma/q_{N}^{-1}(Y)} = \lambda_{(\Gamma/N)/Y}\circ Q_{N}$ and therefore $\ker\big(\lambda_{\Gamma/q_{N}^{-1}(Y)}\big) = Q_{N}^{-1}\big(\ker(\lambda_{(\Gamma/N)/Y})\big)$. Hence,
    \[
    J_{\Gamma, q^{*}_{N}(\mathcal{Y})} = \bigcap_{Y \in \mathcal{Y}} \ker\big(\lambda_{\Gamma/q_{N}^{-1}(Y)}\big) = Q^{-1}_{N}\big(\bigcap_{Y \in \mathcal{Y}}\ker(\lambda_{(\Gamma/N)/Y})\big) = Q^{-1}_{N}(J_{\Gamma/N,\mathcal{Y}}).
    \]
    This implies $Q_{N}\big(J_{\Gamma, q_{N}^{*}(\mathcal{Y})} \cap \mathbb{C}[\Gamma] \big) = J_{\Gamma/N,\mathcal{Y}}\cap\mathbb{C}[\Gamma/N]$ since $ J_{\Gamma/N,\mathcal{Y}}\cap\mathbb{C}[\Gamma/N]\subseteq \mathbb{C}[\Gamma/N] = Q_{N}\big(\mathbb{C}[\Gamma]\big)$, proving \eqref{LiftIdeal}.
    
    Next, suppose $q_{N}^{*}(\mathcal{Y})\in\mathcal{D}_{\Gamma}$, so that $J_{\Gamma,q^{*}_{N}(\mathcal{Y})} \cap \mathbb{C}[\Gamma]$ is dense in $J_{\Gamma,q^{*}_{N}(\mathcal{Y})}$. It follows from \eqref{LiftIdeal} and continuity of $Q_N$ that $J_{\Gamma/N, \mathcal{Y}}\cap \mathbb{C}[\Gamma/N]$ is dense in $J_{\Gamma/N, \mathcal{Y}}$. Therefore $\mathcal{Y}\in\mathcal{D}_{\Gamma/N}$, as required.

    Lastly, we prove the converse -- assume $\mathcal{Y}\in\mathcal{D}_{\Gamma/N}$. Set $A = J_{\Gamma, q^{*}_{N}(\mathcal{Y})}$, $\mathcal{A} = A\cap\mathbb{C}[\Gamma]$ and $I = J_{\Gamma, \{N\}}$. Each $X\in q_{N}^{*}(\mathcal{Y})$ contains the normal subgroup $N$, hence $I\subseteq A$. Since $I = \ker(Q_{N})$, we can naturally identify $Q_N|_A$ with the quotient map $A \to A/I$. By \eqref{LiftIdeal} and the assumption $\mathcal{Y}\in\mathcal{D}_{\Gamma/N}$, we see that $Q_N(\mathcal{A})$ is dense in $Q_N(A) = J_{\Gamma/N,\mathcal{Y}}$. Also, $I\cap\mathcal{A} = J_{\Gamma, \{N\}}\cap\mathbb{C}[\Gamma]$ is dense in $I$ by Corollary \ref{cor:nsg}. Therefore, Lemma \ref{extensions} implies that $\mathcal{A}$ is dense in $A$, proving that $q_{N}^{*}(\mathcal{Y})\in \mathcal{D}_{\Gamma}$.
\end{proof}

We now show the Density Property is preserved under taking countable increasing unions.

\begin{prp}
\label{prp:induc}
    Suppose that an amenable discrete group $\Gamma$ is the countable increasing union of subgroups $\Lambda_1 \subseteq \Lambda_2 \subseteq \dots \subseteq \Gamma$. If $\mathcal{X} \subseteq \mathrm{Sub}(\Gamma)$ is closed, conjugation-invariant, and such that $\Lambda_n\cap\mathcal{X}\in\mathcal{D}_{\Lambda_n}$ for all $n\in\mathbb{N}$, then $\mathcal{X}\in\mathcal{D}_{\Gamma}$.

    In particular, if all $\Lambda_n$ have the Density Property, then so does $\Gamma$.
\end{prp}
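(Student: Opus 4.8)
The goal is to prove $\mathcal{X}\in\mathcal{D}_{\Gamma}$; that is, $J_{\Gamma,\mathcal{X}}\cap\mathbb{C}[\Gamma]$ is dense in $J_{\Gamma,\mathcal{X}}$. The plan is to exhibit $C^{*}_{r}(\Gamma)$ as the C*-inductive limit of the subalgebras $C^{*}_{r}(\Lambda_{n})$ and then transport the hypotheses $\Lambda_{n}\cap\mathcal{X}\in\mathcal{D}_{\Lambda_{n}}$ up to $\Gamma$ by means of Lemma \ref{Inclusions}(i). First I would record that, because $\mathbb{C}[\Gamma]=\bigcup_{n}\mathbb{C}[\Lambda_{n}]$ is dense in $C^{*}_{r}(\Gamma)$ and the inclusions $C^{*}_{r}(\Lambda_{n})\hookrightarrow C^{*}_{r}(\Lambda_{n+1})$ are isometric, we have $C^{*}_{r}(\Gamma)=\overline{\bigcup_{n}C^{*}_{r}(\Lambda_{n})}$.

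The heart of the argument, and the step I expect to be the main obstacle, is the identity
\[
J_{\Gamma,\mathcal{X}}=\overline{\bigcup_{n}\big(J_{\Gamma,\mathcal{X}}\cap C^{*}_{r}(\Lambda_{n})\big)}.
\]
This is an instance of the standard fact that a closed ideal of a C*-inductive limit is the closed union of its intersections with the building blocks. I would either cite this or include the short argument: writing $B$ for the right-hand side, one checks that $B$ is a closed ideal of $C^{*}_{r}(\Gamma)$ contained in $J_{\Gamma,\mathcal{X}}$ and satisfying $C^{*}_{r}(\Lambda_{n})\cap B=C^{*}_{r}(\Lambda_{n})\cap J_{\Gamma,\mathcal{X}}$ for every $n$; the induced surjection $C^{*}_{r}(\Gamma)/B\to C^{*}_{r}(\Gamma)/J_{\Gamma,\mathcal{X}}$ is then isometric on each image of $C^{*}_{r}(\Lambda_{n})$, hence isometric on the dense union of these, hence injective, forcing $B=J_{\Gamma,\mathcal{X}}$. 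I would flag that the naive attempt to prove this identity by applying the conditional expectations $E_{\Lambda_{n}}$ to elements of $J_{\Gamma,\mathcal{X}}$ fails, because $E_{\Lambda_{n}}$ need not preserve $J_{\Gamma,\mathcal{X}}$ (it truncates the coset sums witnessing singularity); this is precisely why the ideal-theoretic formulation above is the right tool.

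With the identity in hand, the remaining steps are formal. Using the hypothesis $\Lambda_{n}\cap\mathcal{X}\in\mathcal{D}_{\Lambda_{n}}$, the space $J_{\Lambda_{n},\Lambda_{n}\cap\mathcal{X}}\cap\mathbb{C}[\Lambda_{n}]$ is dense in $J_{\Lambda_{n},\Lambda_{n}\cap\mathcal{X}}$; applying the isometry $I_{\Lambda_{n}}$ together with Lemma \ref{Inclusions}(i) converts this into the statement that $J_{\Gamma,\mathcal{X}}\cap\mathbb{C}[\Lambda_{n}]$ is dense in $J_{\Gamma,\mathcal{X}}\cap C^{*}_{r}(\Lambda_{n})$. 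Since $J_{\Gamma,\mathcal{X}}\cap\mathbb{C}[\Gamma]=\bigcup_{n}\big(J_{\Gamma,\mathcal{X}}\cap\mathbb{C}[\Lambda_{n}]\big)$, its closure contains $J_{\Gamma,\mathcal{X}}\cap C^{*}_{r}(\Lambda_{n})$ for every $n$, hence contains $\bigcup_{n}\big(J_{\Gamma,\mathcal{X}}\cap C^{*}_{r}(\Lambda_{n})\big)$ and therefore, by the displayed identity, all of $J_{\Gamma,\mathcal{X}}$. This yields $\mathcal{X}\in\mathcal{D}_{\Gamma}$. For the final assertion, if every $\Lambda_{n}$ has the Density Property then, since each $\Lambda_{n}\cap\mathcal{X}$ is closed and conjugation-invariant by \cite[Proposition~7.4]{Hume}, we automatically have $\Lambda_{n}\cap\mathcal{X}\in\mathcal{D}_{\Lambda_{n}}$; as $\mathcal{X}$ was an arbitrary closed conjugation-invariant subset of $\mathrm{Sub}(\Gamma)$, the first part shows that $\Gamma$ has the Density Property.
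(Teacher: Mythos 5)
Your proposal is correct and follows essentially the same route as the paper: both transfer the hypothesis up to $\Gamma$ via Lemma \ref{Inclusions}(i) and then invoke the fact that a closed ideal of an inductive limit is the closed union of its intersections with the building blocks. The only difference is that you prove that inductive-limit fact by hand (correctly, via the isometric quotient argument), whereas the paper simply cites \cite[Lemma~III.4.1]{Davidson}.
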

\begin{proof}
    For each $n\in\mathbb{N}$, write $\mathcal{X}_{n} \coloneq \Lambda_n\cap\mathcal{X}$. Since $\mathcal{X}_n \in \mathcal{D}_{\Lambda_{n}}$ for every $n \in \mathbb{N}$, Lemma~\ref{Inclusions} (i) implies that $J_{\Gamma,\mathcal{X}}\cap\mathbb{C}[\Lambda_n]$ is dense in $J_{\Gamma,\mathcal{X}}\cap I_{\Lambda_n} \big(C^{*}_{r}(\Lambda_n) \big)$ for every $n \in \mathbb{N}$. Hence, $J_{\Gamma,\mathcal{X}}\cap\mathbb{C}[\Gamma] = \bigcup_{n}J_{\Gamma,\mathcal{X}}\cap\mathbb{C}[\Lambda_n]$ is dense in $\bigcup_{n}J_{\Gamma,\mathcal{X}}\cap I_{\Lambda_n} \big(C^{*}_{r}(\Lambda_n)\big)$.
    The countable increasing union $\bigcup_{n=1}^\infty I_{\Lambda_n}\big(C^{*}_{r}(\Lambda_n) \big)$ is dense in $C^{*}_{r}(\Gamma)$, so $\bigcup_{n}J_{\Gamma,\mathcal{X}}\cap I_{\Lambda_n} \big(C^{*}_{r}(\Lambda_n) \big)$ is dense in $J_{\Gamma,\mathcal{X}}$ by \cite[Lemma~III.4.1]{Davidson}, completing the proof.
\end{proof}

As an application of Theorem \ref{thm:quotients} and Proposition \ref{prp:induc}, the Density Property is preserved under inductive limits of groups.

\begin{cor}
    If $(\varphi_{n}:\Gamma_{n}\to \Gamma_{n+1})_{n}$ is a sequence of homomorphisms between groups with the Density Property, then the inductive limit $\Gamma = \lim_{n}(\varphi_{n}, \Gamma_{n})$ has the Density Property.
\end{cor}
\begin{proof}
    Let $(\mu_{n}:\Gamma_{n}\to \Gamma)_{n}$ be the limit homomorphisms. By Theorem \ref{thm:quotients}, $\mu_{n}(\Gamma_{n})$ has the Density Property for every $n\in\mathbb{N}$. Since the subgroups of this form increase to $\Gamma$, Proposition \ref{prp:induc} implies $\Gamma$ has the Density Property.
\end{proof}

Let $\mathcal{X}$ be a closed and conjugation-invariant set of subgroups of $\Gamma$. Proposition~\ref{prp:induc} shows that we can approximate the ideal $J_{\Gamma,\mathcal{X}}$ through approximations of $\Gamma$. We finish the subsection by proving that we can also approximate $J_{\Gamma,\mathcal{X}}$ through approximations of $\mathcal{X}$.

Given a finite subset $F\subseteq\Gamma$ and $X\in\mathcal{X}$, let $X_{F} \subseteq X$ be the subgroup generated by the set $\bigcup_{g\in \Gamma}X\cap gFg^{-1}$. Define $\mathcal{X}_{F} \coloneq \{X_{F} \colon X\in\mathcal{X}\}$, which is conjugation-invariant, and let $J_{\Gamma,\mathcal{X}_{F}} = \bigcap_{X_{F}\in\mathcal{X}_{F}}\ker(\lambda_{\Gamma/X_{F}})$. Notice that $J_{\Gamma, \mathcal{X}_{F}}\subseteq J_{\Gamma, \mathcal{X}_{K}}$ whenever $F\subseteq K$ (see \cite[Appendix~E~and~F]{BHV07}) and $J_{\Gamma,\mathcal{X}_{F}} = J_{\Gamma, \overline{\mathcal{X}_{F}}}$ (by \cite[Proposition~3.3]{BK20}).

\begin{prp}
    \label{prp:inducsg}
    Let $\Gamma$ be a countable discrete amenable group and $\mathcal{X} \subseteq \text{Sub}(\Gamma)$ a closed and conjugation-invariant set of subgroups. Let $F_{n}\subseteq \Gamma$ be a sequence of finite sets that increase to $\Gamma$. Then, $J_{\Gamma,\mathcal{X}} = \overline{\bigcup_{n}J_{\Gamma,\mathcal{X}_{F_{n}}}}$.
\end{prp}
\begin{proof}
    For each $n\in\mathbb{N}$, define $\mathcal{X}_{n} = \overline{\bigcup_{m\geq n}\mathcal{X}_{F_{m}}}$. We have $\mathcal{X}_{n+1}\subseteq \mathcal{X}_{n}$ for all $n\in\mathbb{N}$, so that $J_{\Gamma, \mathcal{X}_{n}}\subseteq J_{\Gamma,\mathcal{X}_{n+1}}$. Set $\mathcal{X}_{\infty} = \bigcap_{n}\mathcal{X}_{n}$. We first show $J_{\Gamma,\mathcal{X}_{\infty}} = \overline{\bigcup_{n}J_{\Gamma, \mathcal{X}_{n}}}$ using that $\mathcal{X}_\infty$ and $\mathcal{X}_1, \mathcal{X}_2, \dots$ are closed conjugation-invariant collections of subgroups.
    
    For $n\in\mathbb{N}\cup\{\infty\}$, let $\Gamma\cdot\mathcal{X}_{n}$ be the \'etale groupoid of cosets of $\mathcal{X}_{n}$ (as described in \cite[Section~5.1]{Hume}) and denote by $q_{n}:C^{*}_{r}(\Gamma)\to C^{*}_{r}(\Gamma\cdot\mathcal{X}_{n})$ the canonical $^*$-homomorphism $\delta_{g}\mapsto \delta_{g\mathcal{X}_{n}}$, $g\in \Gamma$. As noted in \cite{Hume}, we have $\ker(q_{n}) = J_{\Gamma, \mathcal{X}_{n}}.$ For $n_{1}, n_{2}\in\mathbb{N}\cup\{\infty\}$ such that $n_{1}\leq n_{2}$, $\mathcal{X}_{n_2}$ is a closed $\Gamma\cdot\mathcal{X}_{n_1}$ invariant subset of $(\Gamma\cdot\mathcal{X}_{n_1})^{0} = \mathcal{X}_{n_1}$, so the restriction maps $q_{n_{2}, n_{1}}:C_{r}^{*}(\Gamma\cdot\mathcal{X}_{n_{1}})\to C_{r}^{*}(\Gamma\cdot\mathcal{X}_{n_{2}})$ are $^*$-homomorphisms. Moreover, since $\mathcal{X}_{n}$ decrease to $\mathcal{X}_{\infty}$, the maps $(q_{\infty,n})_{n\in\mathbb{N}}$ form an inductive limit for the inductive sequence $(q_{n+1,n})_{n\in\mathbb{N}}$. Therefore, by \cite[Proposition~6.2.4]{RL00} and the fact that $q_{m} = q_{m,n}\circ q_{n}$ for all $m, n\in\mathbb{N}\cup\{\infty\}$ with $m\geq n$, we have $a\in J_{\Gamma,\mathcal{X}_{\infty}}$ if and only if $\lim_{n\to\infty}\|q_{n}(a)\| = 0$. Hence, for every $\varepsilon > 0$, we can choose $n\in\mathbb{N}$ such that $\|q_{n}(a)\|\leq \frac{\varepsilon}{2}$. Also, since $\ker(q_n) = J_{\Gamma, \mathcal{X}_n}$ (and since $q_{n}$ is a quotient map) there exists $b\in J_{\Gamma,\mathcal{X}_{n}}$ such that $\big|\|a - b\| - \|q_{n}(a)\|\big |\leq \frac{\varepsilon}{2}$. Therefore, $\|a - b\|\leq \varepsilon$. This proves $J_{\Gamma,\mathcal{X}_{\infty}} = \overline{\bigcup_{n}J_{\Gamma, \mathcal{X}_{n}}}$.

    Next, let us show that $J_{\Gamma, \mathcal{X}_{n}} = J_{\Gamma, \mathcal{X}_{F_{n}}}$ for all $n\in\mathbb{N}$. By \cite[Proposition~3.3]{BK20}, we have $J_{\Gamma, \mathcal{X}_{n}} = J_{\Gamma, \bigcup_{m\geq n} \mathcal{X}_{F_{m}}}$. Since $X_{F_{n}}\subseteq X_{F_{m}}$ for all $X\in\mathcal{X}$ and $m\geq n$, it follows from \cite[Appendix~E~and~F]{BHV07} that $J_{\Gamma, \bigcup_{m\geq n} \mathcal{X}_{F_{m}}} = J_{\Gamma, \mathcal{X}_{F_{n}}}$.
    
    Finally, we prove that $\mathcal{X}_\infty = \mathcal{X}$. For fixed $X\in\mathcal{X}$, the sequence $(X_{F_{n}})$ converges to $X$ in the Chabauty topology, so that $\mathcal{X}\subseteq \mathcal{X}_{\infty}$. For the other inclusion, let $Y\in \mathcal{X}_{\infty}$. Since $\mathcal{X}_{\infty} = \bigcap_{n}\overline{\bigcup_{m\geq n}\mathcal{X}_{F_{m}}}$, there are sequences $X_k \in \mathcal{X}$ and $n_k \to \infty$ such that the sequence $((X_k)_{F_{n_k}})_k$ converges to $Y$. By passing to a subsequence if necessary, we may assume that $(X_k)_k$ converges to some $X\in\mathcal{X}$. Since $(X_k)_{F_{n_k}} \subseteq X_k$ for all $k$, it is clear that $Y \subseteq X$. On the other hand, for every $x\in X$ we have $x\in X_k$ eventually. Since the $F_{n_k}$ are increasing to $\Gamma$, we have $x\in F_{n_{k}}$ eventually. Combining these observations gives that $x\in (X_k)_{F_{n_k}}$ eventually, so $x\in Y$ and hence $X \subseteq Y$. This completes the proof of the proposition.
\end{proof}

We have the following immediate corollary.

\begin{cor}
    \label{cor:arbnsg}
    Let $\Gamma$ be a countable discrete amenable group and $\mathcal{X}$ a closed collection of normal subgroups. Then, $\mathcal{X}\in\mathcal{D}_{\Gamma}$.
\end{cor}
\begin{proof}
    Let $F$ be a finite subset of $\Gamma$. Since $X\in\mathcal{X}$ is normal, we have $\bigcup_{g\in \Gamma} X\cap gFg^{-1} = \bigcup_{g\in\Gamma} g(X\cap F)g^{-1}$, so that there are only finitely many distinct $X_{F}$, each being normal in $\Gamma$. By Corollary \ref{cor:nsg} and Proposition \ref{prp:finiteunion}, we have $\mathcal{X}_{F}\in\mathcal{D}_{\Gamma}$. Now, choosing a sequence ($F_{n}$) of finite subsets increasing to $\Gamma$, we have $J_{\Gamma, \mathcal{X}} = \overline{\bigcup_{n}J_{\Gamma,\mathcal{X}_{F_n}}}$ by Proposition~\ref{prp:inducsg}. Since  $\mathcal{X}_{F_{n}}\in\mathcal{D}_{\Gamma}$ for all $n\in\mathbb{N}$, it follows that $\mathcal{X}\in\mathcal{D}_{\Gamma}$.
\end{proof}

\PRLsep

\subsection{Applications}\label{subsec:applications}

We will now show that \emph{finite-by-nilpotent} groups satisfy the Density Property. We also show that any closed conjugation-invariant collection $\mathcal{X}$ of finite (or co-finite) subgroups of an amenable group $\Gamma$ belongs to $\mathcal{D}_\Gamma$. Let us establish a key lemma that unifies the cases.

\begin{lem}
\label{lem:finitemin}
    Let $\Gamma$ be a discrete group and $\mathcal{X} \subseteq \mathrm{Sub}(\Gamma)$ a closed and conjugation-invariant set of subgroups. If every $X\in \mathcal{X}_\mathrm{min}$ is finitely generated, then $\mathcal{X}_{\mathrm{min}}$ is finite.
\end{lem}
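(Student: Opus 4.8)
The plan is purely topological, exploiting that $\mathrm{Sub}(\Gamma)$ is a compact Hausdorff (Stone) space and that finite generation turns ``containment'' into an open condition. Amenability plays no role in the argument, and conjugation-invariance enters only through the appeal to Theorem \ref{thm:reduceX}.

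First, to each $Y \in \mathcal{X}_{\mathrm{min}}$ I would associate the set $U_Y \coloneq \{Z \in \mathrm{Sub}(\Gamma) : Y \subseteq Z\}$. Writing $Y = \langle g_1, \dots, g_k \rangle$, which is legitimate since $Y$ is finitely generated, we have $U_Y = \bigcap_{j=1}^{k} \{Z : g_j \in Z\}$. Each set $\{Z : g_j \in Z\}$ is a subbasic clopen set for the Chabauty topology (it is the preimage of $\{1\}$ under the $g_j$-coordinate projection to $\{0,1\}$), so $U_Y$ is clopen, and in particular an open neighborhood of $Y$. The crucial point is that $U_Y \cap \mathcal{X}_{\mathrm{min}} = \{Y\}$: indeed, if $Z \in \mathcal{X}_{\mathrm{min}}$ with $Z \in U_Y$, then $Y \subseteq Z$ with $Y \in \mathcal{X}$, and minimality of $Z$ forces $Y = Z$. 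Thus every element of $\mathcal{X}_{\mathrm{min}}$ is isolated inside $\mathcal{X}_{\mathrm{min}}$, so no point of $\mathcal{X}_{\mathrm{min}}$ can be an accumulation point of $\mathcal{X}_{\mathrm{min}}$.

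I would then argue by contradiction: suppose $\mathcal{X}_{\mathrm{min}}$ is infinite. Since $\mathcal{X}$ is closed in the compact space $\mathrm{Sub}(\Gamma)$, it is compact, hence limit-point compact, so the infinite set $\mathcal{X}_{\mathrm{min}}$ has an accumulation point $X_* \in \mathcal{X}$; by the previous paragraph $X_* \notin \mathcal{X}_{\mathrm{min}}$. Theorem \ref{thm:reduceX} now yields some $Y_0 \in \mathcal{X}_{\mathrm{min}}$ with $Y_0 \subseteq X_*$, equivalently $X_* \in U_{Y_0}$. Since $\mathrm{Sub}(\Gamma)$ is Hausdorff, an accumulation point of a set has the property that every one of its neighborhoods contains infinitely many points of that set; applied to the neighborhood $U_{Y_0}$ of $X_*$, this forces $U_{Y_0}$ to meet $\mathcal{X}_{\mathrm{min}}$ in infinitely many points, contradicting $U_{Y_0} \cap \mathcal{X}_{\mathrm{min}} = \{Y_0\}$. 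Hence $\mathcal{X}_{\mathrm{min}}$ is finite.

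I expect the main conceptual obstacle to be the interplay in the third paragraph: the containment neighborhood $U_{Y_0}$ automatically contains the single minimal subgroup $Y_0$, so a naive ``$X_*$ is approximated by minimal subgroups'' argument does not immediately contradict anything. The resolution is to combine (i) the isolation identity $U_Y \cap \mathcal{X}_{\mathrm{min}} = \{Y\}$, which is precisely what finite generation buys us, with (ii) the fact that $X_*$ cannot itself lie in $\mathcal{X}_{\mathrm{min}}$, and (iii) the $T_1$ strengthening that neighborhoods of a limit point capture infinitely many points of the set. An equivalent route avoiding (iii) is to refine $U_{Y_0}$ by one further coordinate $\{Z : h \in Z\}$ for some $h \in X_* \setminus Y_0$ (available since $Y_0 \subsetneq X_*$), forcing any minimal subgroup in the refined neighborhood to equal $Y_0$ and yet to contain $h \notin Y_0$, again a contradiction.
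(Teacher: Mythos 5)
Your proof is correct and rests on the same core mechanism as the paper's: finite generation of $Y\in\mathcal{X}_{\mathrm{min}}$ makes the containment set $U_Y=\{Z: Y\subseteq Z\}$ clopen, minimality gives $U_Y\cap\mathcal{X}_{\mathrm{min}}=\{Y\}$, and compactness of $\mathcal{X}$ finishes the job. Where you differ is in how the compactness step is closed off: the paper simply asserts that $\mathcal{X}_{\mathrm{min}}$ is ``discrete in the compact space $\mathcal{X}$, and is therefore finite,'' which as literally stated is a gap (a discrete subset of a compact space can be infinite, e.g. $\{1/n\}\subseteq[0,1]$); what is really needed is exactly the extra input you supply, namely that by Theorem \ref{thm:reduceX} every point of $\mathcal{X}$ --- in particular any accumulation point of $\mathcal{X}_{\mathrm{min}}$ --- lies in some $U_{Y_0}$ with $Y_0\in\mathcal{X}_{\mathrm{min}}$, which is incompatible with $U_{Y_0}\cap\mathcal{X}_{\mathrm{min}}=\{Y_0\}$. (Equivalently, and perhaps most cleanly: the open sets $U_Y\cap\mathcal{X}$, $Y\in\mathcal{X}_{\mathrm{min}}$, cover $\mathcal{X}$ by Theorem \ref{thm:reduceX}, so a finite subcover $U_{Y_1},\dots,U_{Y_m}$ exists, and then $\mathcal{X}_{\mathrm{min}}\subseteq\{Y_1,\dots,Y_m\}$.) Your fallback refinement by a coordinate $h\in X_*\setminus Y_0$ is also valid and avoids any appeal to the ``infinitely many points in each neighborhood'' form of accumulation. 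One cosmetic remark: neither amenability nor conjugation-invariance is needed anywhere in this argument, only closedness of $\mathcal{X}$ (the relevant half of Theorem \ref{thm:reduceX} uses only compactness of $\mathcal{X}$), so your caveat about conjugation-invariance can be dropped.
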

\begin{proof}
    For each $X\in \mathcal{X}_\text{min}$, note that $\mathcal{Z}(X) \coloneq \{X'\in\mathcal{X} \colon X\subseteq X'\} = \bigcap^{n}_{i=1}\{X'\in \mathcal{X} \colon x_{i}\in X'\}$, where $\{x_{1},...,x_{n}\}$ is a finite generating set for $X$. Therefore, $\mathcal{Z}(X)$ is open in $\mathcal{X}$ with respect to the Chabauty topology, and hence $\{\mathcal{Z}(X) \colon X \in \mathcal{X}_{\text{min}}\}$ forms an open cover of $\mathcal{X}$ by Theorem~\ref{thm:reduceX}. Since $\mathcal{Z}(X)\cap \mathcal{X}_{\text{min}} = \{X\}$ for each $X\in\mathcal{X}_{\text{min}}$, the cover $\{\mathcal{Z}(X) \colon X \in \mathcal{X}_{\text{min}}\}$ admits no proper subcover, so compactness of $\mathcal{X}$ ensures that $\mathcal{X}_{\text{min}}$ is finite.
\end{proof}

Recall that a group $\Gamma$ is \emph{nilpotent} if there is a finite sequence $\{e\} = \Lambda_{0} \trianglelefteq \Lambda_{1} \trianglelefteq \cdots \trianglelefteq \Lambda_{n} = \Gamma$ of subgroups that are normal in $\Gamma$ such that $\Lambda_{i+1}/\Lambda_{i}$ is contained in the centre $Z(\Gamma/\Lambda_{i})$ of $\Gamma/\Lambda_{i}$ for all $i = 0, \dots, n-1$. We say $\Gamma$ is \emph{finite-by-nilpotent} if there is a finite normal subgroup $N\trianglelefteq\Gamma$ such that $\Gamma/N$ is nilpotent.

A subgroup $X\subseteq \Gamma$ is called \emph{subnormal} if there is a sequence $X = \Lambda_{0}\trianglelefteq \Lambda_{1}\trianglelefteq\cdots\trianglelefteq\Lambda_{k} = \Gamma$ such that $\Lambda_i$ is normal in $\Lambda_{i+1}$ for all $i < k$. Such a sequence is called a \emph{subnormal series}. In a nilpotent group, every subgroup is subnormal. Therefore, in a finite-by-nilpotent group, every subgroup $X$ is contained as a finite index subgroup inside some subnormal subgroup $\Lambda$ (\cite[Section~6.3]{LS87}).

\vspace{0.3cm}
We record a known lemma about finite-by-nilpotent groups that immediately justifies the consideration of Lemma \ref{lem:finitemin}.

\begin{lem}
\label{lem:fbnlemma}
    Let $\Gamma$ be a finitely generated finite-by-nilpotent group. Then every subgroup of $\Gamma$ is finitely generated.
\end{lem}

\begin{proof}
    Every finitely generated finite-by-nilpotent group is finite-by-polycyclic, and the property in the lemma holds for such groups (\cite{H54}).
\end{proof}

We now see our result for finite-by-nilpotent groups and collections of finite (or co-finite) subgroups as a consequence of all our reduction and permanence properties  (Theorem \ref{thm:reducetonsg}, Theorem \ref{thm:reduceX}, Theorem \ref{thm:quotients}, Corollary \ref{cor:nsg}, Proposition \ref{prp:finiteunion}) together with the two previous lemmas.

\begin{thm}
\label{thm:bigthm}
Let $\Gamma$ be an amenable discrete group and $\mathcal{X} \subseteq \mathrm{Sub}(\Gamma)$ a closed and conjugation-invariant set of subgroups. If either
    \begin{enumerate}[label=(\Roman*)] 
        \item every $X\in\mathcal{X}$ is finite, 
        \item every $X\in\mathcal{X}$ is co-finite, or
        \item $\Gamma$ is countable and finite-by-nilpotent.
    \end{enumerate}
then $\mathcal{X}\in\mathcal{D}_{\Gamma}$.
\end{thm}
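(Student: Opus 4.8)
The overall strategy in both cases is the same: using Theorem~\ref{thm:reduceX} I would replace $\mathcal{X}$ by the smaller family $\mathcal{X}_{\min}$, arrange (via Lemma~\ref{lem:finitemin}) that the family at hand is \emph{finite}, and then dispatch a finite family using one of the earlier results. The two hypotheses guarantee finite generation of the relevant minimal subgroups in two different ways, so the endgames differ.

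I would treat case (II) first, as it is the more direct. If every $X\in\mathcal{X}$ is finite then in particular every member of $\mathcal{X}_{\min}$ is finite, hence finitely generated, so Lemma~\ref{lem:finitemin} shows that $\mathcal{X}_{\min}$ is a finite (and therefore closed) family; by Theorem~\ref{thm:reduceX} it suffices to prove $\mathcal{X}_{\min}\in\mathcal{D}_\Gamma$. Let $N$ be the subgroup of $\Gamma$ generated by the finitely many finite subgroups in $\mathcal{X}_{\min}$. Since $\mathcal{X}_{\min}$ is conjugation-invariant, $N$ is normal and contains every $X\in\mathcal{X}_{\min}$, so Theorem~\ref{thm:reducetonsg} reduces the claim to $\mathcal{X}_{\min}\in\mathcal{D}_N$. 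The key point, which I expect to be the main obstacle, is that $N$ is in fact \emph{finite}: the union of the subgroups in $\mathcal{X}_{\min}$ is a finite conjugation-invariant subset of $\Gamma$, so each of its elements has a finite conjugacy class and thus a finite-index centraliser; intersecting these centralisers exhibits a finite-index subgroup of $\Gamma$ centralising $N$, whence $Z(N)$ has finite index in $N$. Schur's theorem then shows the commutator subgroup of $N$ is finite, while the abelianisation of $N$ is a finitely generated abelian group generated by torsion elements and hence finite; together these give that $N$ is finite. Once $N$ is finite we have $C^*_r(N)=\mathbb{C}[N]$, so $J_{N,\mathcal{X}_{\min}}\cap\mathbb{C}[N]=J_{N,\mathcal{X}_{\min}}$ and $\mathcal{X}_{\min}\in\mathcal{D}_N$ holds trivially.

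For case (I) the subgroups of the abelian group $\Gamma$ need not be finitely generated, so Lemma~\ref{lem:finitemin} cannot be applied to $\mathcal{X}_{\min}$ directly. I would first reduce to the finitely generated setting: since $\Gamma$ is countable and abelian, write it as an increasing union $\Gamma=\bigcup_n\Lambda_n$ of finitely generated subgroups, so that by Proposition~\ref{prp:induc} it is enough to show $\Lambda_n\cap\mathcal{X}\in\mathcal{D}_{\Lambda_n}$ for every $n$, noting that each $\Lambda_n\cap\mathcal{X}$ is again closed and conjugation-invariant. Now every subgroup of the finitely generated abelian group $\Lambda_n$ is itself finitely generated, so Lemma~\ref{lem:finitemin} shows $(\Lambda_n\cap\mathcal{X})_{\min}$ is finite, and Theorem~\ref{thm:reduceX} reduces the problem to placing this finite family in $\mathcal{D}_{\Lambda_n}$. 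Finally, since $\Lambda_n$ is abelian every subgroup is normal, so $(\Lambda_n\cap\mathcal{X})_{\min}$ is a finite collection of normal subgroups and Corollary~\ref{cor:nsg} completes the proof.
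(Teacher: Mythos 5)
Your proposal is correct and follows essentially the same route as the paper: reduce to the finitely generated case via Proposition~\ref{prp:induc} (for (I)), pass to $\mathcal{X}_{\min}$ via Lemma~\ref{lem:finitemin} and Theorem~\ref{thm:reduceX}, and finish with Corollary~\ref{cor:nsg} in the abelian case and with the finiteness of the normal subgroup $N$ generated by $\mathcal{X}_{\min}$ in the finite case. The only difference is that where the paper cites an external lemma for the finiteness of $N$, you supply a direct argument via finite conjugacy classes and Schur's theorem (essentially Dietzmann's lemma), which is a valid substitute.
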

\begin{proof}

    (I) Assume that every $X\in\mathcal{X}$ is finite. By Lemma \ref{lem:finitemin} the set $\mathcal{X}_{\text{min}}$ is finite. Since each subgroup $X\in\mathcal{X}_{\text{min}}$ is finite, it follows from \cite[Lemma~7.9]{Hume} (see also the proof of \cite[Theorem~4.7]{BGHL}) that the normal subgroup $N$ generated by all $X\in\mathcal{X}_{\text{min}}$ is finite. Then $C^*_r(N) = \mathbb{C}[N]$, so it holds trivially that  $\mathcal{X}_{\text{min}} \in \mathcal{D}_{N}$, and Theorems \ref{thm:reducetonsg} and \ref{thm:reduceX} imply $\mathcal{X}\in\mathcal{D}_{\Gamma}$.

    (II) Assume every $X\in\mathcal{X}$ is co-finite. Note that if $\Lambda\subseteq \Gamma$ is a subgroup, then $\Lambda\cap X$ is co-finite in $\Lambda$, for every $\Lambda\cap X\in \Lambda\cap\mathcal{X}$. Therefore, Proposition \ref{prp:induc} allows us to reduce to the special case where $\Gamma$ is finitely generated. Since every co-finite subgroup of a finitely generated group is finitely generated, Lemma \ref{lem:finitemin} implies $\mathcal{X}_{\text{min}}$ is finite. Hence, the normal subgroup $N \coloneq \bigcap_{X\in\mathcal{X}_{\text{min}}} X$ is co-finite. Write $\mathcal{Y} = \{X/N \colon X\in\mathcal{X}_{\text{min}}\}$. Then, $\mathcal{Y}$ is a conjugation-invariant set of subgroups in the finite group $\Gamma/N$ with $q^{*}_{N}(\mathcal{Y}) = \mathcal{X}_{\text{min}}$. Since $\mathcal{Y}\in\mathcal{D}_{\Gamma/N}$ (automatically due to finiteness of $\Gamma/N$), we have $\mathcal{X}_{\text{min}} = q^{*}_{N}(\mathcal{Y})\in\mathcal{D}_{\Gamma}$ by Theorem \ref{thm:quotients}, and hence $\mathcal{X}\in\mathcal{D}_{\Gamma}$ by Theorem \ref{thm:reduceX}.

    (III) Let $\Gamma$ be a countable and finite-by-nilpotent group. Our goal is to show that $\mathcal{X} \in \mathcal{D}_\Gamma$. Every subgroup of $\Gamma$ is finite-by-nilpotent so, by Proposition \ref{prp:induc}, we can assume that $\Gamma$ is finitely generated. Then, by Lemma \ref{lem:fbnlemma}, every subgroup of $\Gamma$ is finitely generated, so Lemma \ref{lem:finitemin} implies that $\mathcal{X}_{\text{min}}$ is finite. By Theorem \ref{thm:reduceX} it therefore suffices to assume that $\mathcal{X}$ is finite.

    We show that $\mathcal{X} \in \mathcal{D}_\Gamma$ by induction on $|\mathcal{X}|$. The base case $|\mathcal{X}| = 1$ is covered by Corollary~\ref{cor:nsg}. Suppose that $|\mathcal{X}| \ge 2$, and assume that whenever a conjugation-invariant collection of subgroups in a finite-by-nilpotent group $\Lambda$ has cardinality less than $|\mathcal{X}|$, then it belongs to $\mathcal{D}_\Lambda$. Let $N \subseteq \Gamma$ be the subgroup generated by all elements of $\mathcal{X}$. If $N$ contains every element of $\mathcal{X}$ as a finite index subgroup, then $\mathcal{X} \in \mathcal{D}_{N}$ by (II) above, and hence $\mathcal{X} \in \mathcal{D}_\Gamma$ by Theorem~\ref{thm:reducetonsg}. Suppose otherwise, and choose $X_0 \in \mathcal{X}$ with infinite index in $N$. Since $\Gamma$ is finite-by-nilpotent, there exists a subnormal subgroup $\Lambda \subseteq \Gamma$ containing $X_0$ as a finite-index subgroup. Note that $N \nsubseteq \Lambda$, by assumption. Let $\Gamma = \Lambda_0 \trianglerighteq \Lambda_1 \trianglerighteq \dots \trianglerighteq \Lambda_n = \Lambda$ be a subnormal series, and let $k \in \{1, \dots, n\}$ be minimal such that $N \nsubseteq \Lambda_k$. Define non-empty subsets $\mathcal{X}_1 \coloneq \{X \in \mathcal{X} \colon X \subseteq \Lambda_k \}$ and $\mathcal{X}_2 = \mathcal{X} \setminus \mathcal{X}_1$. Since $\Lambda_{k}$ is normal in $\Lambda_{k-1}$, both $\mathcal{X}_1$ and $\mathcal{X}_2$ are conjugation-invariant collections of subgroups in the finite-by-nilpotent group $\Lambda_{k-1}$ with cardinality strictly less than $|\mathcal{X}|$. By the inductive hypothesis, both $\mathcal{X}_1$ and $\mathcal{X}_2$ belong to $\mathcal{D}_{\Lambda_{k-1}}$. By Proposition~\ref{prp:finiteunion} we have $\mathcal{X} = \mathcal{X}_1 \cup \mathcal{X}_2 \in \mathcal{D}_{\Lambda_{k-1}}$, and hence $\mathcal{X} \in \mathcal{D}_\Gamma$ by Theorem~\ref{thm:reducetonsg}. This completes the proof of the theorem.
\end{proof}

The following corollary is immediate by Theorems \ref{ReductionToIso}, Theorem \ref{thm:bigthm} and Corollary \ref{cor:arbnsg}, where $\mathcal{X}(x)$ is as defined in Definition~\ref{dfn:mathcalX(x)}.

\begin{cor}\label{cor:bigcor}
    Let $G$ be an amenable and second-countable \'etale groupoid. For each $x\in \Gu$ assume that one of the following holds.
    \begin{enumerate}[label=(\Roman*)]
        \item The isotropy group $G_{x}^{x}$ is finite-by-nilpotent.
        \item The subgroups $X\in \mathcal{X}(x)$ are all finite.
        \item The subgroups $X\in \mathcal{X}(x)$ are all co-finite in $G^{x}_{x}$.
        \item The subgroups $X \in \mathcal{X}(x)$ are all normal in $G^x_x$.
    \end{enumerate}
    Then $J \cap \cg$ is dense in $J$.
\end{cor}

Note that condition (II) holds whenever all elements of $\tiGu_\ess$ are finite. Therefore, within the class of amenable and second-countable \'etale groupoids, the class described in Corollary \ref{cor:bigcor} (II) is larger than the class described in \cite[Theorem~4.7]{BGHL}.

\vspace{0.3cm}
The final result of this section considers \'etale group bundles -- that is, \'etale groupoids whose range and source maps are equal.

\begin{cor}\label{cor:BundleOfGroups}
    Let $G$ be an amenable and second-countable \'etale group bundle. Then $J \cap \cg$ is dense in $J$.
\end{cor}

\begin{proof}
    We show that for every $x \in \Gu$, the subgroups $X \in \mathcal{X}(x)$ are all normal in $G^x_x$. The result then follows by Corollary~\ref{cor:bigcor} (IV).

    Let $x \in \Gu$, $X \in \mathcal{X}(x)$ and choose a net $(x_\alpha) \subset \Gu$ whose set of limit points is equal to $X$. Take $g \in G^x_x$ and let $U \subseteq G$ be an open bisection containing $g$. The $x_\alpha$ eventually belong to $s(U)$, so there exist $g_\alpha \in U$ with $s(g_\alpha) = x_\alpha$. Since $s|_U \colon U \to s(U)$ is a homoeomorphism, the net $(g_\alpha)$ converges to $g$. Then, since $g_\alpha = g_\alpha x_\alpha$ for all $\alpha$, continuity of multiplication on the groupoid ensures that $(g_\alpha)$ has set of limit points equal to $gX$. However, $r(g_\alpha) = s(g_\alpha)$, so we have $g_\alpha = x_\alpha g_\alpha$ for all $\alpha$, and hence continuity of multiplication also ensures that $(g_\alpha)$ has set of limit points equal to $Xg$. Therefore $gX = Xg$. The element $g \in G^x_x$ was arbitrary, so $X$ is a normal subgroup of $G^x_x$, as required.
\end{proof}

\section{Structure of $J\cap\cg$}\label{sec:StructureJcg}

In this section we provide explicit instructions for building elements of $J \cap \cg$, and show that functions constructed in this way have dense linear span in $J \cap \cg$. Therefore, when $J \cap \cg$ is dense in the singular ideal $J$, we are able to describe an explicit family of functions in $\cg$ whose linear span is dense in $J$.

\vspace{0.3cm}
Let us describe the approach. One starts with a unit $x \in \Gu$ and a finitely supported function $b \colon G^x_x \to \mathbb{C}$ that satisfies the linear equations \eqref{eqn:ideal}. Given $h \in G_x$, translate the support of $b$ by left-multiplication, and choose open bisections $V_1, \dots, V_n$ containing these translates. Lastly, choose an open neighbourhood $W \subseteq \bigcap_{i=1}^n s(V_i)$ of $x$ such that the open bisections $U_i \coloneq V_i \cap s^{-1}(W)$ satisfy the conditions of Lemma~\ref{lem:NewWI_condition}. Then, for any $\psi \in C_c(W)$, the function $f^\psi$ (defined in \eqref{f^psi}) belongs to $J \cap \cg$ (see Proposition~\ref{NewCutting}). Overall, the function $f^\psi$ is determined by data in the tuple $(x,h,b,U_1, \dots,U_n,\psi)$. By ranging over all valid tuples, one obtains a family of functions whose linear span is dense in $J \cap \cg$ (see Theorem~\ref{SpanJcg}).

We remark that since $\mathcal{X}(x)$ is invariant under conjugation, an element $b \in \mathbb{C}[G^x_x]$ satisfies \eqref{eqn:ideal} if and only if, for every subgroup $X \in \mathcal{X}(x)$, $b$ belongs to the kernel of the associated quasi-regular representation $\lambda_{G^x_x/X}$.

\subsection{Preliminary lemmas} We will need three preliminary results.

\begin{lem}\label{EqualOnFib}
    Let $G$ be an \'etale groupoid. Fix $f \in \cg$, $x \in \Gu$, and assume that $f(g) = 0$ for all $g \in G_x$. Then $f$ is continuous at each $g \in G_x$.
\end{lem}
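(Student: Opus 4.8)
The plan is to verify continuity at $g$ by checking it along nets directly. Since $f \in \cg$, I may write $f = \sum_{i=1}^{n} f_i$ where each $f_i$ is supported on an open bisection $U_i$ with $f_i|_{U_i} \in C_c(U_i)$ and $f_i|_{G \setminus U_i} = 0$. Note first that $f(g) = 0$ because $g \in G_x$. Taking an arbitrary net $g_\alpha \to g$ in $G$, I want $f(g_\alpha) \to 0$. As $f(g_\alpha) \to f(g)$ will follow once every subnet admits a further subnet along which $f(g_\alpha) \to 0$, and since there are only finitely many $U_i$, the sets $\{i : g_\alpha \in U_i\}$ take finitely many values; hence I may pass to a subnet on which this set is a fixed $S \subseteq \{1,\dots,n\}$ and on which each $i$ is either in every $U_i$ or in none. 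For $i \notin S$ the term $f_i(g_\alpha)$ is then identically $0$, so only indices $i \in S$ contribute, and each such $i$ satisfies $g \in \overline{U_i}$.

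For $i \in S$, write $x_\alpha \coloneq s(g_\alpha) \to x$ and let $\sigma_i \colon s(U_i) \to U_i$ be the continuous local section inverting $s|_{U_i}$, so $g_\alpha = \sigma_i(x_\alpha)$. Two cases arise. If $x \notin s(U_i)$, then $s\big(\supp(f_i|_{U_i})\big)$ is a compact, hence closed, subset of the Hausdorff space $\Gu$ that does not contain $x$; since $x_\alpha \to x$, the net eventually avoids it and $f_i(g_\alpha) \to 0$. If $x \in s(U_i)$, then continuity of $\sigma_i$ gives $g_\alpha \to \sigma_i(x) =: h_i \in G_x$ and $f_i(g_\alpha) \to f_i(h_i)$. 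Combining the two cases, $\lim_\alpha f(g_\alpha) = \sum_{i \in S,\, x \in s(U_i)} f_i(h_i)$, a finite sum of values of the $f_i$ at the points $h_i \in G_x$.

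The main point, and the only place the hypothesis on the fibre is used, is to show this limit vanishes. Grouping by the finitely many distinct points $p$ among the $h_i$, I claim $\{i \in S : h_i = p\} = \{j : p \in U_j\}$. Indeed $g_\alpha \to h_i = p$, so for any open bisection $U_j$ with $p \in U_j$ we have $g_\alpha \in U_j$ eventually, forcing $j \in S$; and since $p \in U_j \cap G_x$ is the unique element of $U_j$ over $x$, necessarily $h_j = \sigma_j(x) = p$. The reverse inclusion is immediate from the definition of $h_j$. Consequently $\sum_{i \in S,\, h_i = p} f_i(p) = \sum_{j : p \in U_j} f_j(p) = f(p) = 0$, the last equality because $p \in G_x$. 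Summing over the finitely many such $p$ yields $\lim_\alpha f(g_\alpha) = 0 = f(g)$, so $f$ is continuous at $g$.

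The genuinely non-Hausdorff obstacle I expect to be the crux is precisely the case $i \in S$ with $g \in \overline{U_i} \setminus U_i$: here $g_\alpha \to g$ may \emph{simultaneously} converge to a different point $h_i \in G_x$ lying in the closure of $U_i$, so the individual terms $f_i(g_\alpha)$ need not tend to $f_i(g)$. The resolution is the observation that this double convergence forces every bisection through $h_i$ to capture the tail of $(g_\alpha)$, so the surviving contributions reassemble into the full fibre sum $f(h_i)$, which is zero by hypothesis; the Hausdorffness of $\Gu$ is what disposes of the remaining indices with $x \notin s(U_i)$.
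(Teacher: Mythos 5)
Your argument is correct, but it takes a genuinely different route from the paper's. The paper's proof is short because it leans on the Hausdorff cover machinery already set up in Section \ref{HffCover}: given a net $g_\beta \to g$, one passes to a subnet $(g_\gamma)$ for which $\iota(g_\gamma)$ converges in the Fell topology to some $\bm{g} \in \tiG$; since $\bm{g} \subseteq G_x$, equation \eqref{Cond1} gives $\mfi(f)(\bm{g}) = \sum_{h \in \bm{g}} f(h) = 0$, and continuity of $\mfi(f)$ on $\tiG$ yields $f(g_\gamma) \to 0$. You instead work directly with the decomposition $f = \sum_i f_i$ into bisection-supported pieces, stabilise the index set $S = \{i : g_\alpha \in U_i\}$ along a subnet, identify the limit of $f(g_\alpha)$ as $\sum_{i} f_i(h_i)$ with $h_i = \sigma_i(x) \in G_x$, and then regroup the terms by the distinct points $p$ among the $h_i$ to recognise each group as the full fibre sum $f(p) = 0$. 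The key combinatorial step --- that $\{i \in S : h_i = p\}$ coincides with $\{j : p \in U_j\}$, so that no summand $f_j(p)$ is missing from the regrouped sum --- is exactly the content hidden inside the continuity of $\mfi(f)$ (i.e.\ inside \cite[Lemma~3.2]{BGHL}) in the paper's version, and you verify it correctly: every bisection through $p$ must eventually capture the tail of $(g_\alpha)$, hence lies in $S$. Your treatment of the indices with $x \notin s(U_i)$ via compactness of $s(\supp(f_i|_{U_i}))$ and Hausdorffness of $\Gu$ is also sound. What your approach buys is a self-contained proof requiring no reference to $\tiG$; what the paper's buys is brevity, since the identical bookkeeping has already been done once and for all in the construction of $\mfi$.
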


\begin{proof}
    Let $g \in G_x$ and let $(g_\beta)$ be a net in $G$ converging to $g$. In order to prove that $\lim_\beta f(g_\beta) = 0$, it suffices to find a subnet $(g_\gamma)$ satisfying $\lim_\gamma f(g_\gamma) = 0$ (since $(g_\beta)$ is arbitrary). There exists a subnet $(g_\gamma)$ for which the net $\iota(g_\gamma)$ converges in $\tiG$ (i.e. in the Fell topology). Let $\bm{g} \in \tiG$ denote the limit. By \eqref{Cond1} we have $\mfi(f)(\bm{g}) = \sum_{g \in \bm{g}} f(g) = 0$ since $\bm{g} \subseteq G_x$. Moreover, $f(g_\gamma) = \mfi(f)\big(\iota(g_\gamma)\big) \to \mfi(f)(\bm{g})$ by continuity of $\mfi(f)$. This completes the proof.
\end{proof}

The following results isolate observations from \cite[Proposition~5.19]{Hume}. We follow the same proofs (see also \cite[Theorem~4.7]{BGHL}). Given $x \in \Gu$, recall that a subgroup $X \subseteq G^x_x$ belongs to $\mathcal{X}(x)$ if and only if there exists a net $(x_\alpha) \subset C$ of (Hausdorff) units whose set of limit points is precisely $X$, and such that every subnet of $(x_{\alpha})$ has its limit points contained in $X$ (see Definition~\ref{dfn:mathcalX(x)}).

\begin{lem}\label{lem:NewWI_condition}
    Let $G$ be an \'etale groupoid that is covered by countably many open bisections.  Fix $x \in \Gu$, and let $g_1, \dots, g_n \in G_{x}$ be distinct points.
    
    There exist open bisections $U_1, \dots, U_n \subseteq G$ satisfying the following conditions:
    \begin{enumerate}[label=(\roman*)]
        \item $g_i \in U_i$ for all $i \in \{1, \dots, n\}$;
        \item $s(U_i) = s(U_j)$ for all $i, j \in \{1, \dots, n\}$;
        \item $W_I = \emptyset$ whenever $I \subseteq \{1, \dotsc, n\}$ is \underline{not} of the form
        \begin{equation}\label{eq:WI_NiceForm}
        \{g_i \colon i \in I\} = kX\cap \{g_1, \dots, g_n\}
        \end{equation}
        for some $k \in G_{x}$ and $X \in \mathcal{X}(x)$.
    \end{enumerate}
    Here $W_I \coloneq s\big( \bigcap_{i \in I} U_i \setminus \bigcup_{j \notin I} U_j \big) \cap C$.    
\end{lem}

Note that for any small open set $U \subseteq \Gu$ containing $x$, the cutdowns $U_i' \coloneq U_i \cap s^{-1}(U)$ also satisfy (i) -- (iii).

\begin{proof}
    It is clear that there exist open bisections $V_1, \dots, V_n \subseteq G$ satisfying the first two conditions. Write $V \coloneq s(V_1)$ and $V_I \coloneq s\big( \bigcap_{i \in I} U_i \setminus \bigcup_{j \notin I} U_j \big) \cap C$ for subsets $I \subseteq \{1, \dots, n\}$. Choose an open subset $W \subseteq \Gu$ such that $x \in W \subseteq V$ and $W \cap \overline{V_I}^{\Gu} = \emptyset$ for all $I \subseteq \{1, \dotsc, n\}$ with $x \notin \overline{V_I}^{\Gu}$. Define $U_i \coloneq V_i \cap s^{-1}(W)$ for $i \in \{1, \dots, n\}$. It is clear that the open bisections $U_1, \dots, U_n \subseteq G$ satisfy conditions (i) and (ii). We claim that they also satisfy condition (iii).

    Let $I \subseteq \{1, \dots, n\}$ be such that $W_I \neq \emptyset$. We claim that $I$ is of the form in \eqref{eq:WI_NiceForm}. It is clear that $W_I = W \cap V_I$ and hence $x \in \overline{W_I}^{\Gu}$ by the construction of $W$. Let $(x_\alpha)$ be a net in $W_I$ converging to $x$. By passing to a subnet if necessary we can assume that the net $\iota(x_\alpha)$ converges in the Hausdorff cover groupoid $\tilde{G}$. Let $X \in \tiGu_\ess$ denote the limit. Note that $X \in \mathcal{X}(x)$ by definition. Also, by definition of the Fell topology, the set of limit points of the net $(x_\alpha)$ is precisely $X$. Now, let $g_\alpha \in \bigcap_{i \in I} U_i \setminus \bigcup_{j \notin I} U_j$ be the unique elements satisfying $s(g_\alpha) = x_\alpha$. Continuity of multiplication on the groupoid implies that the net $(g_\alpha)$ has set of limit points $g_{i_0} X$, where $i_0$ is any element of $I$. The unit $x$ belongs to $X$, and hence $\{g_i \colon i \in I\} \subseteq g_{i_0} X$. On the other hand, if $j \notin I$ then $g_\alpha \notin U_j$ and hence the net $(g_\alpha)$ does not converge to $g_j$. Therefore $g_j \notin g_{i_0} X$, and it follows that $\{g_i \colon i \in I\} = g_{i_0}X\cap \{g_1, \dots, g_n\}$ for any $i_0 \in I$. This completes the proof.
\end{proof}

The next proposition is a slight generalisation of the construction of singular elements found in \cite[Proposition~5.19]{Hume}. The proof is exactly the same, but we record it for completeness.

\begin{prp}\label{NewCutting}
    Let $G$ be an \'etale groupoid that is covered by countably many open bisections. Fix $x \in \Gu$, and suppose $b \in \mathbb{C}[G^x_x]$ belongs to $\ker(\lambda_{G^x_x/X})$ for all $X \in \mathcal{X}(x)$. Let $\{g_1, \dots, g_n\} \subseteq G^x_x$ denote the support of $b$, let $h \in G_x$, and let $U_1, \dots, U_n$ be open bisections containing $hg_1, \dots, hg_n$ and satisfying the conditions of Lemma~\ref{lem:NewWI_condition}. Then
    \begin{equation}
    \label{f^psi}
    f^\psi \coloneq \sum_{i = 1}^n b(g_i) (\psi \circ s \vert_{U_i})
    \end{equation}
    belongs to $J \cap \cg$ for any $\psi \in C_c\big(s(U_i)\big)$ (note that this does not depend on $i$).
\end{prp}

\begin{proof}
    For $I \subseteq \{1, \dots, n\}$ set $\check{S}_I = (\bigcap_{i \in I} U_i) \setminus (\bigcup_{j \notin I} U_j)$ and $W_I = s(\check{S}_I) \cap C$, as in Lemma~\ref{lem:NewWI_condition}.
    Take arbitrary $\psi \in C_c\big(s(U_i)\big)$, and let $f^\psi$ be as in \eqref{f^psi}. If $W_I \neq \emptyset$, Lemma~\ref{lem:NewWI_condition}(iii) ensures that $\{hg_i \colon i \in I\} = kX\cap \{hg_1, \dots, hg_n\}$ for some $k \in G_x$ and $X \in \mathcal{X}(x)$. Clearly $h^{-1}k \in G^x_x$. Therefore, for $g \in \check{S}_I$ we have
    \[
    f^\psi(g) = \sum_{i \in I} b(g_i) \psi(s(g)) = \sum_{t \in h^{-1}kX} b(t) \psi(s(g)) = 0
    \]
    since $b \in \ker(\lambda_{G^x_x / X})$. It follows that any $g \in \osupp(f^\psi)$ belongs to $\check{S}_I$ for some $I \subseteq \{1, \dots, n\}$ with $W_I = \emptyset$, and hence $s\big(\osupp(f^\psi)\big) \subseteq \Gu \setminus C$. This implies that $f^\psi \in J$ by \cite[Proposition~7.18]{KM21}, as desired.
\end{proof}

\PRLsep

\subsection{Building blocks for $J \cap \cg$}\label{subsec:BuildingBlocksJcg}
Let $G$ be an \'etale groupoid that is covered by countably many open bisections. Each building block is assembled from the data in a tuple $\Xi \coloneq (x, h, b, U_1, \dots, U_n, \psi)$ where
\begin{itemize}
    \item $x \in \Gu$;
    \item $h \in G_x$;
    \item $b \in \mathbb{C}[G^x_x]$ belongs to $\ker(\lambda_{G^x_x/X})$ for all $X \in \mathcal{X}(x)$, and has support $\{g_1, \dots, g_n\}$;
    \item $U_1, \dots, U_n \subseteq G$ are open bisections satisfying the conditions of Lemma~\ref{lem:NewWI_condition}. That is, they have equal source, contain the elements $hg_1, \dots, hg_n \in G_x$, and are such that $W_I = \emptyset$ whenever $I \subseteq \{1, \dotsc, n\}$ is \underline{not} of the form
        \[
        \{hg_i \colon i \in I\} = kX\cap \{hg_1, \dots, hg_n\} \quad \quad \text{for some } k \in G_{x}, X \in \mathcal{X}(x).
        \]
    Here $W_I \coloneq s\big( \bigcap_{i \in I} U_i \setminus \bigcup_{j \notin I} U_j \big) \cap C$. 
    \item $\psi \in C_c\big(s(U_i)\big)$ (note that this does not depend on $i$).
\end{itemize}
Given such a tuple $\Xi$ we define
\begin{equation}\label{eq:building_blocks}
f^\Xi \coloneq \sum_{i=1}^n b(g_i) (\psi \circ s \vert_{U_i}).
\end{equation}
By Proposition~\ref{NewCutting} we have $f^\Xi \in J \cap \cg$. Let $\mathcal{E} \subseteq J \cap \cg$ denote the linear span of elements of the form $f^\Xi$ from \eqref{eq:building_blocks}, where we let $\Xi$ range over all valid tuples. We will see that $\mathcal{E}$ is dense in $J \cap \cg$.

\begin{lem}\label{lem:RestrictJcg}
    Let $G$ be an \'etale groupoid that is covered by countably many open bisections.
    Fix $f \in J \cap \cg$, $x \in \Gu$, and let $g_1, \dotsc, g_n \in G_{x}$ be all elements of $G_{x}$ on which $f$ is non-zero. There exist open bisections $U_1, \dots, U_n$ with equal source, containing $g_1, \dots, g_n$ and such that
    \[
    f^\psi \coloneq \sum_{i = 1}^n f(g_i) (\psi \circ s \vert_{U_i})
    \]
    belongs to $\mathcal{E}$ for any $\psi \in C_c\big(s(U_i)\big)$ (note that this does not depend on $i$).
\end{lem}

\begin{proof}
    Let $U_1, \dots, U_n$ be open bisections satisfying the conditions of Lemma~\ref{lem:NewWI_condition} and such that $U_{i_1}$ and $U_{i_2}$ are disjoint whenever $r(g_{i_1}) \neq r(g_{i_2})$. Partition the set $\{g_1, \dots, g_n\}$ according to the images under the range map $r$ and write $\{g_1, \dots, g_n\} = \bigsqcup_{j=1}^m \{g^j_1, \dots, g^j_{k_j}\}$, $\{U_1, \dots, U_n\} = \bigsqcup_{j=1}^m \{U^j_1, \dots, U^j_{k_j}\}$, where $g_i^j \in U_i^j$. Fix $\psi \in C_c\big(s(U_i)\big)$. For each $j \in \{1, \dots, m\}$ define
    \[
    f^j \coloneq \sum_{i = 1}^{k_j} f\big( g^j_i \big) \big(\psi \circ s \vert_{U^j_i}\big)
    \]
    so that $f^{\psi} = \sum_{j=1}^m f^j$. For each $j$, we show that $f^j = f^{\Xi_j}$ for some valid tuple $\Xi_j$, where $f^{\Xi_j}$ is as in \eqref{eq:building_blocks}. The lemma then follows.
    
    Let $V_j$ be an open bisection containing ${(g^j_1)}^{-1}$, and let $\varphi \in C_c(V_j)$ be such that $\varphi\big(({g^j_1}\big)^{-1}) = 1$. Define $b_j \coloneq \eta_x(\varphi * f)$, where $ \eta_x \colon \cg \to \mathbb{C}[G^x_x]$ is the restriction map. Since $\varphi * f \in J \cap \cg$, it follows from \cite[Proposition~5.19]{Hume} that $b_j$ belongs to $\ker(\lambda_{G^x_x/X})$ for all $X \in \mathcal{X}(x)$. Then, since $f(g_i^j) = b_j\big( {(g^j_1)}^{-1} g^j_i \big)$ for all $i = 1, \dots, k_j$, we have $f^j = f^{\Xi_j}$ where $\Xi_j \coloneq \big(x, g_1^j, b_j, U^j_1, \dots, U^j_{k_j}, \psi \big)$. The initial assumptions on the bisections $U_1, \dots, U_n$ ensure that $\Xi_j$ is a valid tuple.
\end{proof}

For $f \in \cg$, define
\begin{equation}\label{eq:Inorm}
\norm{f}_I \coloneq \max\left\{ \sup_{x \in \Gu} \sum_{g \in G_x} \abs{f(g)} , \sup_{x \in \Gu} \sum_{g \in G^x} \abs{f(g)} \right\},
\end{equation}
following \cite{Hahn}.

\begin{thm}\label{SpanJcg}
    Let $G$ be an \'etale groupoid that is covered by countably many open bisections. Then $\mathcal{E}$ is dense in $J \cap \cg$ with respect to the norm $\norm{\cdot}_I$.
\end{thm}

The norm $\norm{\cdot}_I$ dominates the full norm $\norm{\cdot}_{C^*(G)}$ and hence any $C^{*}$-norm. Therefore, $\mathcal{E}$ is dense in $J \cap \cg$ with respect to any $C^{*}$-norm.

\begin{proof}
    Let $f \in J \cap \cg$. Take a compact set $K \subseteq G$ with $\osupp(f)$ contained in its interior. Whenever $\dot{f} \in \cg$ satisfies $\osupp(\dot{f}) \subseteq K$ we have $\norm{\dot{f}}_{I} \le C_K \norm{\dot{f}}_\infty
    $ where $\norm{\cdot}_\infty$ denotes the supremum norm, and $C_K$ is a constant depending only on $K$. Therefore, it suffices to find, for each $\varepsilon>0$, a function $f' \in \mathcal{E}$ satisfying $\osupp(f') \subseteq K$ and $\norm{f-f'}_\infty \le \varepsilon$.
    
    Fix $\varepsilon>0$. Take $x \in s(K)$, and let $g_1, \dotsc, g_n \in G_x$ be all elements of $G_x$ on which $f$ is non-zero. Let $U_1, \dots, U_n \subseteq K$ be open bisections as in Lemma~\ref{lem:RestrictJcg}, and define $U_x \coloneq s(U_i)$ (note that this does not depend on $i$). In particular $x \in U_x$. Then
    \[
    f^{\psi} \coloneq \sum_{i=1}^n f(g_i) (\psi \circ s \vert_{U_i})
    \]
    belongs to $\mathcal{E}$ for any $\psi \in C_c(U_x)$. Fix some $\psi_x \in C_c(U_x)$ with $\psi_x(x) = 1$. Clearly $\osupp(f^{\psi_x}) \subseteq K$. We have $f|_{G_{x}} = f^{\psi_x}|_{G_{x}}$ and $\osupp(f-f^{\psi_x}) \subseteq K$, so by Lemma \ref{EqualOnFib} there exists an open neighbourhood $W_x \subseteq U_x$ of $x$ such that $\abs{f(g) - f^{\psi_x}(g)} \le \varepsilon$ whenever $s(g) \in W_x$. The sets $\{W_x\}_{x \in s(K)}$ form an open cover for the compact set $s(K)$, so select a finite subcover $W_{x_1}, \dotsc, W_{x_m}$. Let $\varphi_1, \dots, \varphi_m \in C_c\big(\Gu\big)$ be a partition of unity for $s(K)$ subordinate to this cover, and define
    \[
    f' \coloneq \sum_{j=1}^m (\varphi_j \circ s) f^{\psi_{x_j}} = \sum_{j=1}^m f^{\varphi_j.\psi_{x_j}}.
    \]
    Then $f' \in \mathcal{E}$ satisfies $\osupp(f') \subseteq K$ and $\norm{f-f'}_\infty \le \varepsilon$, as required.
\end{proof}

\begin{rmk}\label{rmk:UncountablyBisections}
    The construction in this subsection generalises to groupoids $G$ that are not covered by countably many open bisections. For an open subgroupoid $H \subseteq G$ that \emph{is} covered by countably many open bisections, define $\mathcal{E}_H$ to be the linear span of functions of the form $f^\Xi$ in \eqref{eq:building_blocks} for tuples $\Xi$ in $H$. Then, define $\mathcal{E} \subseteq J \cap \cg$ to be the subspace generated by all the $\mathcal{E}_H$. 
    With this modification, the conclusion of Theorem~\ref{SpanJcg} holds for any \'etale groupoid.
\end{rmk}

\section{Ample Groupoids}
\label{sec:Ample}

In this section we describe an explicit spanning set for the algebraic singular ideal $J_\mathbb{C}$ of an ample groupoid. The construction is similar to that in Subsection~\ref{subsec:BuildingBlocksJcg}. In Subsection~\ref{subsec:AlgSingDenseJcg} we prove that the algebraic singular ideal $J_\mathbb{C}$ is dense in $J \cap \cg$ for any ample groupoid.

\vspace{0.3cm}
Recall that an \emph{ample groupoid} is an \'etale groupoid with a basis of compact open bisections. If $G$ is an ample groupoid, the \emph{Steinberg algebra} $\mathbb{C}G$ over the complex numbers is defined in \cite{S10} as
\[
\mathbb{C}G \coloneq \text{span}\left\{ \mathbbm{1}_U \colon U \subseteq G \text{ a compact open bisection}\right\}.
\]
The \emph{algebraic singular ideal} is the intersection $J_{\mathbb{C}} \coloneq J \cap \mathbb{C}G$. Since $\mathbb{C}G \subseteq \cg$, we have $J_\mathbb{C} \subseteq J \cap \cg$.

\PRLsep

\subsection{Building blocks for $J_\mathbb{C}$}\label{subsec:BuildingBlocksAlgSing} Let $G$ be an ample groupoid that is covered by countably many open bisections. Each building block is assembled from the data in a tuple $\Theta \coloneq (x, h, b, U_1, \dots, U_n)$ where
\begin{itemize}
    \item $x \in \Gu$;
    \item $h \in G_x$;
    \item $b \in \mathbb{C}[G^x_x]$ belongs to $\ker(\lambda_{G^x_x/X})$ for all $X \in \mathcal{X}(x)$, and has support $\{g_1, \dots, g_n\}$;
    \item $U_1, \dots, U_n \subseteq G$ are compact open bisections satisfying the conditions of Lemma~\ref{lem:NewWI_condition}. That is, they have equal source, contain the elements $hg_1, \dots, hg_n \in G_x$, and are such that $W_I = \emptyset$ whenever $I \subseteq \{1, \dotsc, n\}$ is \underline{not} of the form
        \[
        \{hg_i \colon i \in I\} = kX\cap \{hg_1, \dots, hg_n\} \quad \quad \text{for some } k \in G_{x}, X \in \mathcal{X}(x).
        \]
    Here $W_I \coloneq s\big( \bigcap_{i \in I} U_i \setminus \bigcup_{j \notin I} U_j \big) \cap C$. 
\end{itemize}
Given such a tuple $\Theta$ we define
\begin{equation}\label{eq:building_blocks_AlgSing}
f^\Theta \coloneq \sum_{i=1}^n b(g_i) \mathbbm{1}_{U_i}.
\end{equation}
Then $f^\Theta \in J_{\mathbb{C}}$ by Proposition~\ref{NewCutting}.

\begin{thm}\label{SpanAlgSing}
    Let $G$ be an ample groupoid that is covered by countably many open bisections. The algebraic singular ideal $J_\mathbb{C}$ is linearly spanned by elements of the form $f^\Theta$ from \eqref{eq:building_blocks_AlgSing}.
\end{thm}

\begin{proof}
    Let $f \in J_\mathbb{C}$. Take a compact set $K \subseteq G$ with $\osupp(f) \subseteq K$. Fix $x \in s(K)$, and let $g_1, \dots, g_n$ be all elements of $G_x$ on which $f$ is non-zero. An argument identical to that in Lemma~\ref{lem:RestrictJcg} yields compact open bisections $U_1, \dots, U_n$ containing $g_1, \dots, g_n$, with equal source, and such that
    \[
    f^x \coloneq \sum_{i=1}^n f(g_i) \mathbbm{1}_{U_i}
    \]
    is a finite sum of functions of the form $f^\Theta$ from \eqref{eq:building_blocks_AlgSing}. Let $U_x \coloneq s(U_i)$ (note that this does not depend on $i$). In particular $x \in U_x$. We have $f|_{G_{x}} = f^x|_{G_{x}}$, so, by Lemma \ref{EqualOnFib}, $f-f^x$ is continuous at each $g \in G_x$. However, both $f$ and $f^x$ belong to $\mathbb{C}G$, and hence only take finitely many values. Therefore, by shrinking the $U_i$ if necessary, we can assume that $f|_{U_x} = f^x|_{U_x}$ (one can always shrink the compact open bisections in a tuple $\Theta$). The sets $\{U_x\}_{x \in s(K)}$ form an open cover for the compact set $s(K)$, so select a finite subcover $U_{x_1}, \dotsc, U_{x_m}$. Removing intersections if necessary, we may assume that the $U_{x_j}$ are pairwise disjoint (this is possible since the $U_{x_j}$ are contained in the Hausdorff space $\Gu$, and are thus clopen in $\Gu$). Define
    \[
    f' \coloneq \sum_{j=1}^m f^{x_j} .
    \]
    It is clear that $f'$ is a finite sum of functions of the form $f^\Theta$ from \eqref{eq:building_blocks_AlgSing}. This completes the proof, since $f' = f$ by construction.
\end{proof}

\begin{rmk}
    The construction in this subsection generalises to ample groupoids that are not covered by countably many open bisections. After a modification analogous to that in Remark~\ref{rmk:UncountablyBisections}, one obtains, for any ample groupoid, and explicit spanning set for $J_\mathbb{C}$.
\end{rmk}

\PRLsep

\subsection{Density of $J_\mathbb{C}$ in $J \cap \cg$}\label{subsec:AlgSingDenseJcg} We prove that the algebraic singular ideal $J_\mathbb{C}$ is dense in $J \cap \cg$ for any ample groupoid -- no countability assumptions are needed. In Corollary~\ref{SelfSim} this observation is applied to the class of groupoids arising from contracting self-similar groups.

\begin{thm}\label{AlgSingId}
    Let $G$ be any ample groupoid. Then $J_{\mathbb{C}}$ is dense in $J \cap \cg$ with respect to the norm $\norm{\cdot}_I$.
\end{thm}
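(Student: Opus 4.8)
The plan is to first approximate $f\in J\cap\cg$ by an element of the Steinberg algebra $\mathbb{C}G$ in the $\norm{\cdot}_I$-norm, and then to correct this approximant so that it becomes singular while moving it by a controlled amount. For the first step, write $f=\sum_{k=1}^{m}f_{k}$ with $f_{k}$ supported on an open bisection and $f_{k}$ continuous there; since $G$ is ample we may take these to be compact open bisections $U_{1},\dots,U_{m}$. Because $\Gu$ is locally compact Hausdorff and totally disconnected, each $f_{k}$ can be uniformly approximated on $U_{k}$ by a locally constant compactly supported function $\tilde f_{k}\in\mathbb{C}G$ with support inside $U_{k}$. Setting $h_{0}\coloneq\sum_{k}\tilde f_{k}\in\mathbb{C}G$, the difference $f-h_{0}$ is bounded by $\max_{k}\norm{f_{k}-\tilde f_{k}}_{\infty}$ and, crucially, uses the \emph{same} bisections as $f$, so it still meets every source and every range fibre in at most $m$ points. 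Hence $\norm{f-h_{0}}_{I}\le m\max_{k}\norm{f_{k}-\tilde f_{k}}_{\infty}$ can be made as small as we wish.

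The heart of the argument is that $h_{0}$, although not yet singular, is automatically \emph{close} to being singular, and can be corrected. I would introduce the generic (essential) value $h_{0}^{\circ}$ of $h_{0}$: the function recording the value $h_{0}$ takes on a dense set of points near each $g\in G$. Since functions in $\cg$ are continuous at points lying over Hausdorff units, and the Hausdorff units are dense in $\Gu$, the function $h_{0}^{\circ}$ is determined by the values of $h_{0}$ at these points. Now $f\in J$ means $\osupp(f)$ has empty interior, so $f$ vanishes at every point over a Hausdorff unit (where it is continuous); therefore at such a point $g$ we have $|h_{0}^{\circ}(g)|=|h_{0}(g)|=|h_{0}(g)-f(g)|\le\norm{h_{0}-f}_{\infty}\le\norm{h_{0}-f}_{I}$. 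Passing to the closure gives the clean bound $\norm{h_{0}^{\circ}}_{\infty}\le\norm{h_{0}-f}_{I}$.

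With $h_{0}^{\circ}$ in hand, I would set $h\coloneq h_{0}-h_{0}^{\circ}$. By construction $h$ vanishes fibrewise over every Hausdorff unit, so by Lemma~\ref{EqualOnFib} it is continuous there, and since Hausdorff units are dense this forces $\osupp(h)$ to be nowhere dense; thus $h\in J$. Provided $h_{0}^{\circ}$ again lies in $\mathbb{C}G$ (supported on the same bisections $U_{1},\dots,U_{m}$), we get $h\in J\cap\mathbb{C}G=J_{\mathbb{C}}$ and $\osupp(h_{0}^{\circ})$ meets each fibre in at most $m$ points, so $\norm{h_{0}^{\circ}}_{I}\le m\,\norm{h_{0}^{\circ}}_{\infty}\le m\,\norm{h_{0}-f}_{I}$. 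Combining the estimates,
\[
\norm{f-h}_{I}\le\norm{f-h_{0}}_{I}+\norm{h_{0}^{\circ}}_{I}\le (m+1)\,\norm{f-h_{0}}_{I},
\]
which is arbitrarily small. The singular part $h=h_{0}-h_{0}^{\circ}$ is precisely a finite sum of building blocks of the shape produced by Lemma~\ref{Cutting} with $\psi$ the indicator of a compact open set, and its membership in $J$ is certified by the vanishing sums \eqref{e:sum=0}.

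The hard part, and the step I would spend the most care on, is the claim that the essential part $h_{0}^{\circ}$ of the Steinberg element $h_{0}$ again belongs to $\mathbb{C}G$ and is supported on the same bisections. Concretely this requires reorganising $h_{0}=\sum_{k}\tilde f_{k}$ over a finite family of compact open bisections adapted to the non-Hausdorff overlaps, so as to split off the part that survives to generic points (which is $h_{0}^{\circ}\in\mathbb{C}G$) from the part supported on the nowhere-dense non-Hausdorff locus (which is singular, again via Lemma~\ref{Cutting}); the delicate point is keeping the fibrewise support bounded by $m$ throughout. A final, routine matter is that $\norm{\cdot}_{I}$ controls source and range fibres symmetrically: this is handled either by applying the same estimate to $h_{0}^{*}$ or by observing that all bisections involved meet range fibres in at most one point as well. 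Since $\norm{\cdot}_{I}$ dominates every $C^{*}$-norm, this simultaneously yields density of $J_{\mathbb{C}}$ in $J\cap\cg$ in any $C^{*}$-norm.
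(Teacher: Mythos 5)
There is a genuine gap, and it sits exactly where you flag ``the hard part'': the object $h_{0}^{\circ}$ does not exist in general. At a point $g$ lying over a non-Hausdorff unit, different nets of generic points converging to $g$ produce different limits of $h_{0}$. Concretely, writing $h_{0}=\sum_{i}c_{i}\mathbbm{1}_{U_{i}}$ and using the decomposition of a source-fibre neighbourhood into the pieces $\check{S}^{W}_{I}$ from the proof of Lemma~\ref{Cutting}, a net approaching $g_{j}$ through $\check{S}^{W}_{I}$ (with $j\in I$ and $W_{I}\neq\emptyset$) has $h_{0}$-values converging to $\sum_{i\in I}c_{i}$, and these sums genuinely differ for different admissible $I\ni j$. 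For the singular function $f$ itself all of these limits coincide and equal $0$ --- that is precisely the content of \eqref{e:sum=0} --- but for an arbitrary Steinberg approximant whose coefficients are merely $\varepsilon$-close to the values of $f$, they are each small but pairwise distinct. So ``the value $h_{0}$ takes on a dense set of points near $g$'' is not a number, and no pointwise-defined correction $h_{0}-h_{0}^{\circ}$ can satisfy the fibrewise sum conditions $\sum_{g'\in\bm{g}}(h_{0}-h_{0}^{\circ})(g')=0$ that singularity requires; what is canonically defined is only the restriction of $\mfi(h_{0})$ to the essential Hausdorff cover, which is a function on $\tiG$ with no preferred lift back to an element of $\mathbb{C}G$. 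A secondary issue: you invoke density of the Hausdorff units, which the paper only establishes for groupoids covered by countably many bisections, whereas the theorem assumes only that $G$ is ample.

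The paper's proof avoids the ill-posed splitting by never approximating the coefficients: for each $x\in s(K)$ it takes the \emph{exact} values $f(g_{1}),\dotsc,f(g_{m})$ of $f$ on the single source fibre $G_{x}$ and forms the local model $f^{x}=\sum_{i}f(g_{i})\mathbbm{1}_{U_{i}}$, which lies in $J_{\mathbb{C}}$ by Lemma~\ref{Cutting} because the singularity of $f$ forces the sums in \eqref{e:sum=0} to vanish. The $\varepsilon$-closeness of $f^{x}$ to $f$ near $x$ then comes for free from Lemma~\ref{EqualOnFib} (continuity of $f-f^{x}$ on $G_{x}$, where it vanishes), and the local models are patched together over a disjoint clopen cover of $s(K)$. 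In other words, the order of operations is reversed relative to your plan: the paper builds an element that is singular by construction and checks it is close to $f$, rather than building something close to $f$ and trying to project it onto the singular part. If you want to rescue your approach you would need to replace $h_{0}^{\circ}$ by a family of local corrections indexed by the sets $I$ with $W_{I}\neq\emptyset$, at which point you are reconstructing Lemma~\ref{Cutting}.
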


As stated earlier (see \eqref{eq:Inorm}), the norm $\norm{\cdot}_I$ dominates the full norm $\norm{\cdot}_{C^*(G)}$ and hence any $C^{*}$-norm. Therefore, $J_{\mathbb{C}}$ is dense in $J \cap \cg$ with respect to any $C^{*}$-norm.

\begin{proof}
    Given $f \in J \cap \cg$, write $f = \sum_{j=1}^m f_j$ for some $f_j \in C_c(V_j)$ and open bisections $V_j$, and let $H \subseteq G$ be the subgroupoid generated by the open bisections $V_1, \dots, V_m$. Then $H$ is an open subgroupoid that is covered by countably many open bisections. By working in $H$ instead, we can assume that the groupoid $G$ is covered by countably many open bisections. Then, by Theorem~\ref{SpanJcg}, it suffices to assume that $f = f^\Xi$ for some tuple $\Xi = (x, h, b, U_1, \dots, U_n, \psi)$ as in Subsection~\ref{subsec:BuildingBlocksJcg}, and where
    \[
    f^\Xi = \sum_{i=1}^n b(g_i) (\psi \circ s|_{U_i}),
    \]
    as in \eqref{eq:building_blocks}. Take a compact set $K \subseteq G$ containing each $U_i$. As in the proof of Theorem~\ref{SpanJcg} it suffices to find, for each $\varepsilon>0$, a function $f' \in J_{\mathbb{C}}$ satisfying $\osupp(f') \subseteq K$ and $\norm{f^\Xi-f'}_\infty \le \varepsilon$.
    
    Fix $\varepsilon>0$. Define $U \coloneq s(U_i)$ (note that this does not depend on $i$). Since $U$ is totally disconnected, there exists a locally constant function $\varphi \in C_c(U)$ satisfying $\norm{\psi - \varphi}_\infty \le \frac{\varepsilon}{n \max_i(\abs{b(g_i)})}$. Consider the function
    \[
    f' \coloneq \sum_{i=1}^n b(g_i) (\varphi \circ s|_{U_i}).
    \]
    It is clear that $f'$ belongs to the Steinberg algebra $\mathbb{C}G$, and satisfies the conditions $\osupp(f') \subseteq K$ and $\norm{f^\Xi-f'}_\infty \le \varepsilon$. Moreover, by Proposition~\ref{NewCutting}, $f'$ belongs to the singular ideal $J$, and hence also to $J_\mathbb{C}$. This completes the proof.
\end{proof}

We apply the previous theorem to the class of groupoids arising from contracting self-similar groups.

\begin{cor}\label{SelfSim}
    Let $G$ be the groupoid arising from a contracting self-similar group action. Then $J_\mathbb{C}$ is dense in $J$.
\end{cor}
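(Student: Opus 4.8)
The plan is to realise this corollary as a direct combination of Theorem~\ref{AlgSingId} and Corollary~\ref{AbIsotropy&FinFib}, once the relevant structural properties of $G$ are in place. First I would record that the groupoid $G$ attached to a contracting self-similar action on a finite alphabet is ample, amenable, and second-countable: it acts on the Cantor set given by the boundary of the finite rooted tree, so it admits a basis of compact open bisections and is second-countable because the alphabet is finite, while amenability of such groupoids is known (see \cite{Nek09}). This places $G$ simultaneously within the scope of both results cited above.

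Next I would verify that $G$ satisfies condition (II) of Corollary~\ref{AbIsotropy&FinFib}. By \cite[Corollary~7.13]{BGHL}, every element of the essential Hausdorff cover unit space $\tiGu_\ess$ is a finite subgroup when $G$ arises from a contracting self-similar group. Since $\mathcal{X}(x) = \pi^{-1}(x)\cap\tiGu_\ess$ by definition, this says precisely that every $X\in\mathcal{X}(x)$ is finite, for every $x\in\Gu$. Thus condition (II) holds, and Corollary~\ref{AbIsotropy&FinFib} yields that $J\cap\cg$ is dense in $J$ (in the reduced norm).

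Finally, since $G$ is ample, Theorem~\ref{AlgSingId} gives that $J_{\mathbb{C}}$ is dense in $J\cap\cg$ with respect to any $C^*$-norm, in particular with respect to the norm of $C^*_r(G)$. Combining this with the density obtained in the previous step and using transitivity of density then shows that $J_{\mathbb{C}}$ is dense in $J$, completing the proof.

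The main obstacle here is not analytic but bibliographic: the argument reduces to invoking the correct structural inputs, namely amenability of the contracting self-similar groupoid together with the finiteness statement of \cite[Corollary~7.13]{BGHL}. Once these are in hand, the corollary is a formal two-step density argument chaining $J_{\mathbb{C}}\subseteq J\cap\cg\subseteq J$, with no further estimates required. I would only need to take mild care that the $C^*$-norm used in Theorem~\ref{AlgSingId} is compatible with the reduced norm in which $J\cap\cg$ is shown dense in $J$, which is immediate since, for amenable second-countable groupoids, $C^*(G)=C^*_r(G)$.
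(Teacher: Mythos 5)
Your proposal is correct and follows essentially the same route as the paper: combine Theorem~\ref{AlgSingId} with Corollary~\ref{AbIsotropy&FinFib}(II), using \cite[Corollary~7.13]{BGHL} for finiteness of the subgroups in $\tiGu_\ess$ and \cite{Nek09} (the paper also cites \cite{GNSV}) for amenability and second-countability. Your extra remark about compatibility of the $C^*$-norms is a harmless and correct clarification.
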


\begin{proof}
    By Theorem \ref{AlgSingId} it suffices to show that $J \cap \cg$ is dense in $J$. By \cite[Corollary~2.19]{MilSte25} (see also \cite{Nek09} and \cite{GNSV}) the groupoid $G$ is second-countable and amenable. Moreover, \cite[Corollary~7.13]{BGHL} implies that every element of $\tiGu$ is finite. The result follows by Corollary~\ref{cor:bigcor} (II).
\end{proof}

\begin{rmk}
    We would like to thank B. Kwaśniewski for pointing out to us that the previous result generalises to groupoids $G$ arising from contracting self-similar \emph{groupoid} actions on finite graphs without sources. Such a groupoid is amenable by \cite[Corollary~2.19]{MilSte25}, and it follows from either \cite[Corollary~6.23]{KM25} or \cite[Proposition~9.1]{Aakre} that elements of $\tiGu$ are finite.
\end{rmk}

\end{document}